\newtheorem{theorem}{Theorem}
\newtheorem{corollary}[theorem]{Corollary}
\newtheorem{lemma}[theorem]{Lemma}
\newtheorem{proposition}[theorem]{Proposition}
\newtheorem{claim}[theorem]{Claim}
\theoremstyle{definition}
\titleformat{\section}[hang]{\scshape\large\bfseries\filcenter}{\S\thesection}{4pt}{}
\titleformat{\subsection}[hang]{\scshape\bfseries}{\thesubsection.}{4pt}{}
\def\dj{d\kern-0.4em\char"16\kern-0.1em}
\def\Dj{\mbox{\raise0.3ex\hbox{-}\kern-0.4em D}}
\def\djcaps{\mbox{\raise0.17ex\hbox{-}\kern-0.335em d}}
\newcommand\id{\mathbbm{1}}	
\newcommand{\tss}[1]{\textsuperscript{#1}}
\newcommand{\on}[1]{
	\operatorname{#1}
}
\newcommand{\bigconv}[1]{
	\mathbf{C}_{#1}
}
\newcommand{\tdt}{\times\cdots\times}
\newcommand{\tightoverset}[2]{
  \mathop{#2}\limits^{\vbox to -.5ex{\kern-1.15ex\hbox{$#1$}\vss}}}
\newcommand\blfootnote[1]{%
  \begingroup
  \renewcommand\thefootnote{}\footnote{#1}%
  \addtocounter{footnote}{-1}%
  \endgroup
}
\renewenvironment{thebibliography}[1]
{
  \begin{oldthebibliography}{#1}
    \setlength{\itemsep}{0em  plus 0.3ex}
    \setlength{\parskip}{0em}
}
{
  \end{oldthebibliography}
}
\newcommand\ssk[1]{
	\substack{#1}
}
\newcommand\ex{\mathop{\mathbb{E}}}
\newcommand{\exx}{
  \mathop{
    \mathchoice{\vcenter{\hbox{\larger[4]$\mathbb{E}$}}}
               {\kern0pt\mathbb{E}}
               {\kern0pt\mathbb{E}}
               {\kern0pt\mathbb{E}}
  }\displaylimits
}
\newcommand*\bcdot{\mathpalette\bigcdot@{0.5}}
\newcommand*\bigcdot@[2]{\mathbin{\vcenter{\hbox{\scalebox{#2}{$\m@th#1\bullet$}}}}}
\def\blfootnote{\gdef\@thefnmark{}\@footnotetext}
\begin{document}

\begin{center}\Large\noindent{\bfseries{\scshape M\"obius function is strongly orthogonal to polynomial phases over $\mathbb{F}_p[t]$}}\\[24pt]\normalsize\noindent{\scshape Luka Mili\'cevi\'c\tss{\dag} and \v{Z}arko Ran\djcaps elovi\'c\tss{\ddag}}
\end{center}
\blfootnote{\noindent\dag\ Mathematical Institute of the Serbian Academy of Sciences and Arts\\\phantom{\ddag\ }Email: luka.milicevic@turing.mi.sanu.ac.rs\\
\noindent\ddag\ Mathematical Institute of the Serbian Academy of Sciences and Arts\\
\phantom{\ddag\ }Email: zarko.randjelovic@turing.mi.sanu.ac.rs}

\footnotesize
\begin{changemargin}{1in}{1in}
\centerline{\sc{\textbf{Abstract}}}
\phantom{a}\hspace{12pt}~In this paper, we prove power-saving bounds for the corelation of the M\"obius function with polynomial phases of degree $k$ in function fields $\mathbb{F}_p[t]$, when $p > k$. The proof relies on a new approximation result for phases of biased multilinear forms and the recently established strong bounds for the problem of finding bounded codimension varieties inside the dense ones. Along the way, we also obtain polynomial bounds in the inverse theorem for Gowers uniformity norms in the special case of polynomial phases in finite vector spaces.
\end{changemargin}
\normalsize

\section{Introduction}

\hspace{12pt} Let $p$ be a fixed prime. The M\"obius function over the polynomial ring $\mathbb{F}_p[t]$ is defined as 
\[\mu(f) =  \begin{cases}
    (-1)^k,& \text{where }k\text{ is the number of monic irreducible factors of }f,\text{ when }f\text{ is square-free,}\\
    0, &\text{ otherwise,}
\end{cases}\]
at a polynomial $f(t)$. We write $A_n$ and $G_n$ for the sets of all monic degree $n$ polynomials in $\mathbb{F}_p[t]$ and all degree at most $n-1$ polynomials respectively. Viewing $\mathbb{F}_p[t]$ as a vector space over $\mathbb{F}_p$, the set $G_n$ is a subspace of $\mathbb{F}_p[t]$ of dimension $n$ and $A_n$ is a coset of $G_n$.\\

Throughout the paper, $\chi : \mathbb{F}_p \to \mathbb{C}$ is a non-trivial additive character, say $\chi(a) = \exp(\frac{2 \pi i a}{p})$, though the choice does not affect the results or definitions in the paper. Our main aim in this paper is to prove that the M\"obius function is strongly orthogonal to polynomial phases, with power-saving bounds.

\begin{theorem}\label{main_thm}
    Let $k \in \mathbb{N}$ and suppose that $p > k$. There exist
 constants $\varepsilon_k$ and $D_k$ depending only on $k$ such that 
    \[
    \frac{1}{p^n}\Big|\sum_{f \in G_n} \mu(f)\chi(Q(f))\Big| \leq  D_k p^{-\varepsilon_k n}
    \]
    holds for any degree $k$ polynomial $Q$ on the vector space $G_n$.
\end{theorem}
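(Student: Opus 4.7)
The plan is to follow the classical bilinear-decomposition route for M\"obius orthogonality, adapted to $\mathbb{F}_p[t]$ and to the polynomial-phase input. First, I would apply a Vaughan or Heath--Brown identity for $\mu$ in the function-field setting---writing $\mu$ on $A_n$ as a combination of multi-convolutions $\sum_{a_1 \cdots a_r = f} \prod_i c_i(a_i)$ with controlled degree ranges for the $a_i$---so that the target sum $\sum_{f\in G_n}\mu(f)\chi(Q(f))$ reduces to bilinear sums $\sum_{a,b} u(a)v(b)\chi(Q(ab))$, where the summation is over pairs with $\deg(ab)=n-1$ and $u,v$ are bounded. These split into Type~I pieces (one factor essentially constant on a long block) and Type~II pieces (both factors balanced in $\ell^2$).

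For the Type~I pieces, I fix $a$ and analyse the inner sum $\sum_{b\in G_{n-\deg a}}\chi(Q(ab))$. Since multiplication by $a$ is an $\mathbb{F}_p$-linear injection of $G_{n-\deg a}$ into $G_n$, the map $b\mapsto Q(ab)$ is a polynomial of degree at most $k$ on a vector space of dimension $n-\deg a$, and a Weyl-type equidistribution bound on finite vector spaces---valid because $p>k$ keeps the top symmetric multilinear derivative from vanishing---gives the required power-saving whenever the top-degree component of $Q(a\,\cdot\,)$ is nondegenerate. The exceptional $a$ where degeneracy occurs are few and a direct counting estimate absorbs them into an acceptable error.

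The Type~II pieces are the substantive part. A single Cauchy--Schwarz in $a$ and expansion produces an average of $\chi(Q(ab)-Q(ab'))$; iterating Cauchy--Schwarz / van~der~Corput $k-1$ times brings out iterated discrete derivatives of $Q$, and one is left with a Gowers $U^{k}$-type quantity for the polynomial phase $\chi\circ Q$. The key input is then the polynomial-bound inverse theorem for Gowers norms restricted to polynomial phases that is announced in the abstract: if this $U^{k}$ mass is not power-small, $Q$ must correlate polynomially with a lower-degree polynomial on a dense structured set. The advertised approximation result for phases of biased multilinear forms converts derivative-level bias back into bias of $Q$ itself, and the bounded-codimension-dense-variety result locates the correlating structure as an algebraically-defined subvariety on which the degree has effectively dropped. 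Induction on $k$ then closes the loop.

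The main obstacle is producing a genuine power saving $p^{-\varepsilon_k n}$ rather than the $(\log p^n)^{-c}$ or quasi-polynomial losses inherent in the general Gowers-inverse machinery. Restricting to polynomial phases is precisely what enables the polynomial-bound inverse theorem, but marrying it with Heath--Brown's identity is delicate: each Cauchy--Schwarz degrades the exponent, and the exceptional sets from Type~I together with the subvariety output from Type~II must be quantified and propagated so that the final exponent is a positive function of $k$ only. Ensuring that the biased-multilinear-form approximation and the variety-finding step are themselves power-saving with $k$-uniform exponents is what makes the entire scheme close and yields the clean bound $D_k p^{-\varepsilon_k n}$.
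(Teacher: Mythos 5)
Your proposal correctly identifies the outer scaffolding (a Vaughan-type identity, Cauchy--Schwarz on the Type~II piece, the approximation theorem for biased forms, the low-codimension variety result, and induction on~$k$), and this matches the paper's high-level strategy. However, there is a genuine gap at the crux of the argument, in exactly the place where the paper does its novel work.

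After Cauchy--Schwarz the Type~II piece does \emph{not} produce a Gowers $U^k$ mass for $\chi\circ Q$. What actually comes out of the Gowers--Cauchy--Schwarz step is the bias of a $2k$-variable multilinear form
\[
(a_1,\dots,a_k,x_1,\dots,x_k)\;\longmapsto\;\sum_{\pi,\sigma\in\mathrm{Sym}_k}\mathcal{L}_Q\bigl(a_{\pi(1)}x_{\sigma(1)},\dots,a_{\pi(k)}x_{\sigma(k)}\bigr),
\]
with $a_i$ ranging over short polynomials in $G_m$ and $x_j$ over $G_{n-m}$. This has multiplicative coupling $a_ix_j$ and a symmetric-group structure that a $U^k$ norm of $Q$ simply does not have, and converting ``this $2k$-variable form is biased for many $a_{[k]}$'' into ``$\mathcal{L}_Q$ itself is biased on $G_n$'' is not a routine appeal to an inverse theorem. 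It is precisely what Sections~4 and~5 of the paper (the dependent-random-choice argument producing a low-codimension variety of biased slice forms, followed by a Chevalley--Warning argument to select a special $w\in G_d$ and a rearrangement-type induction over $[0,s-1]^k$) are designed to accomplish, and none of this is visible in your plan. Similarly, your treatment of the Type~I (small $m$) piece via a Weyl bound plus ``a direct counting estimate absorbs the exceptional $a$'' does not work: the exceptional, degenerate case is the whole content of the problem, and the paper handles it by feeding directly into the same biased-$\mathcal{L}_Q$ machinery, not by showing the exceptional set is sparse. Your description of the approximation theorem's role is also off: it is applied at the final stage, once $\mathcal{L}_Q$ has already been shown biased, to replace $\chi\circ Q$ by a bounded combination of lower-degree phases; it does not ``convert derivative-level bias back into bias of $Q$.'' In short, the bridge from the bilinear/multilinear information coming out of Vaughan's identity to the conclusion that $\mathcal{L}_Q$ is biased is the hard new step, and the proposal currently jumps it.
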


Along the way we prove a new result on the structure of multilinear forms whose values are not equidistributed. Recall that a map $\alpha : V_1 \tdt V_k \to \mathbb{F}_p$, where $V_1, \dots, V_k$ are finite-dimensional vector spaces over $\mathbb{F}_p$ is a \textit{multilinear form} if it is linear in each of its variables. A very useful measure of its structure is the \textit{bias} (and closely related analytic rank of Gowers and Wolf introduced in~\cite{GowWolf}), defined as the average of its phase 

\[\on{bias}\alpha = \frac{1}{|V_1|\cdots |V_k|} \sum_{x_1 \in V_1, \dots, x_k \in V_k} \chi \circ \alpha(x_1, \dots, x_k).\]

In this paper, we obtain the following approximation theorem for multilinear forms of large bias, saying that the phase of any such form is close in $L^2$ norm to a linear combination of phases of lower-order forms. To state it more precisely, we recall the notion of a \textit{multiaffine form}, which is a map from $V_1 \tdt V_k $ to $\mathbb{F}_p$ that is affine in each of its variables. We say that it has a \textit{vanishing multilinear part} if it does not have monomials of degree $k$. We may now state the approximation theorem.

\begin{theorem}\label{biased_ml_form_approx-intro}
    Let $\alpha : V_1 \tdt V_k \to \mathbb{F}_p$ be a multilinear form with $\on{bias}\alpha \geq c$. Let $\varepsilon > 0$. Then there exist
    $m \leq 2^{25}\varepsilon^{-14}c^{-14}$, coefficients $c_1, \dots, c_m \in \mathbb{C}$ such that $\sum_{i \in [m]} |c_i| \leq c^{-1}$, and multiaffine forms $\lambda_1, \dots, \lambda_m : V_1 \tdt V_k \to \mathbb{F}_p$ with vanishing multilinear part such that 
    \[\Big\| \chi \circ \alpha - \sum_{i \in [m]} c_i \chi \circ \lambda_i\Big\|_{L^2} \leq \varepsilon.\]
\end{theorem}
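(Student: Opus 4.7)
The plan is to use the bias hypothesis to produce a partition-rank style factorisation of $\alpha$, to linearise that product structure via a Fourier expansion of $\chi$ over $\mathbb{F}_p$, and finally to compress the resulting exact representation to the claimed polynomial size by a Maurey sampling argument. For the first step, integrating out one coordinate translates the hypothesis $|\ex_x \chi(\alpha(x))| \geq c$ into the statement that the $(k-1)$-multilinear variety
\[
\{(x_1, \ldots, x_{k-1}) \in V_1 \tdt V_{k-1} : \alpha(x_1, \ldots, x_{k-1}, \cdot) = 0 \in V_k^*\}
\]
has density at least $c$. Applying the recently established strong bounds for bounded-codimension subvarieties inside dense multilinear varieties (the tool highlighted in the abstract), this dense variety contains a subvariety cut out by multilinear forms $M_1, \ldots, M_r$ on proper subsets of the remaining variables with $r$ controlled polynomially by $c^{-1}$; a multilinear Nullstellensatz-style extraction then expresses $\alpha$ as $\sum_{i=1}^r \beta_i(x_{S_i}) \gamma_i(x_{[k]\setminus S_i})$ with each $\beta_i, \gamma_i$ multilinear on a proper subset of $[k]$, perhaps up to an $L^2$ slack of size $O(\varepsilon)$ to be absorbed into the final approximation.

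For the second step, I group the data into vector-valued multilinear forms $\beta, \gamma : V_1 \tdt V_k \to \mathbb{F}_p^r$ with $\alpha(x) = \beta(x) \cdot \gamma(x)$ and apply the single-variable Fourier identity $\chi(uv) = p^{-1} \sum_{a,b\in \mathbb{F}_p} \chi(au + bv - ab)$ coordinatewise. This yields the exact equality
\[
\chi(\alpha(x)) = \frac{1}{p^r} \sum_{(a, b) \in \mathbb{F}_p^r \times \mathbb{F}_p^r} \chi(-a \cdot b)\, \chi\big(a \cdot \beta(x) + b \cdot \gamma(x)\big),
\]
in which each argument $a \cdot \beta(x) + b \cdot \gamma(x)$ is a sum of multilinear forms on proper subsets of $[k]$, hence multiaffine on $V_1 \tdt V_k$ with vanishing multilinear part, exactly as required. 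This decomposition has $p^{2r}$ summands and $\ell^1$-coefficient mass $p^r$. For the third step, Maurey's random sampling applied to this exact identity (viewed as $p^r$ times a uniform average of unit-modulus multiaffine phases) produces, with $N$ independent draws, an empirical average whose expected $L^2$ error is at most $p^r/\sqrt{N}$ while the $\ell^1$ mass is preserved; choosing $N$ of order $p^{2r}/\varepsilon^2$ then delivers the claimed $L^2$-approximation of quality $\varepsilon$.

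The hardest part of the plan is step one: to match the claimed exponents one needs a partition-rank bound tight enough to guarantee $p^r \leq c^{-1}$ (so that $\sum_i |c_i| \leq c^{-1}$) together with $p^{2r}/\varepsilon^2 \leq 2^{25} c^{-14} \varepsilon^{-14}$. Off-the-shelf polynomial equivalences between bias and partition rank are not quite strong enough for $k \geq 3$, so I expect it will be necessary to iterate the bounded-codimension-variety input inside an energy-decrement scheme, peeling off one partition-rank summand at a time from the current residual form until its bias falls below a threshold controlled by $\varepsilon$, and then to track the accumulated approximation errors and coefficient masses through this iteration in order to produce the advertised polynomial dependence on both $c$ and $\varepsilon$.
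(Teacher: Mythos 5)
Your step 1 is the fatal gap. To get $\sum_i |c_i| \leq c^{-1}$ via your plan you need $p^r \leq c^{-1}$, i.e.\ a partition-rank decomposition of $\alpha$ with $r \leq \log_p c^{-1}$. But whether partition rank is $O(\log_p c^{-1})$ is precisely the key open problem in the partition vs.\ analytic rank area, which the paper explicitly cites as unresolved and deliberately routes around. The best results available (Janzer, Mili\'cevi\'c, Moshkovitz--Zhu) only give $r$ polynomial or quasi-linear in $\log_p c^{-1}$ with $k$-dependent exponents, so $p^r$ would be quasi-polynomial in $c^{-1}$, not $c^{-1}$ itself; your ``Nullstellensatz-style extraction'' from a bounded-codimension subvariety does not produce a decomposition of $\alpha$ at all (containing a low-codimension variety in the zero set of $A$ does not by itself express $\alpha$ as a sum of $r$ products of lower-order forms), and the energy-decrement iteration you gesture at would have to solve exactly this open problem to terminate with the required rank. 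Steps 2 and 3 of your plan (the $\chi(uv) = p^{-1}\sum_{a,b}\chi(au+bv-ab)$ Fourier expansion and Maurey/empirical sampling) are individually fine, but they sit downstream of an input you cannot produce.

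The paper avoids partition rank entirely: it works directly with $A : U_{[k-1]} \to U_k$ and $Z = A^{-1}(0)$, observes that for any shift $t_{[k-1]}$ the multiaffine map $L_{t_{[k-1]}}(x_{[k-1]}) = \sum_{I \subsetneq [k-1]} (-1)^{k-|I|} A(x_I, t_{[k-1]\setminus I})$ has vanishing multilinear part and agrees with $A$ on the translate $t_{[k-1]}+Z$, then covers $U_{[k-1]}$ by $m$ random translates (Hoeffding), externally approximates $Z$ by a bounded-codimension variety $V$, and Fourier-expands the indicator $\id_{t^{(i)}+V}$ to land on genuine multiaffine phases. The indicator's Fourier expansion is where the bounded-codimension-variety input actually enters, and it only needs $|V\setminus Z|$ small rather than an exact low-rank identity. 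That is the idea your proposal is missing; without it the claimed $c^{-1}$ bound on the $\ell^1$ mass and the polynomial bound on $m$ are not reachable by your route.
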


The bound on $m$, which is the number of multilinear forms in the linear combination above, is optimal up to constant in the degree of $(2\varepsilon{^-1}c^{-1})$, at least in the case of interest when $\varepsilon \leq c$.\\

Along the way, we also establish polynomial bounds in the inverse theorem for Gowers uniformity norms in the important special case of polynomial phases in finite vector spaces.

\begin{corollary}
    Let $V = \mathbb{F}_p^n$, $Q : V \to \mathbb{F}_p$ be a polynomial of degree $k$ and assume $k < p$. If $\|\chi \circ Q\|_{\mathsf{U}^k} \geq c$ then there exists a polynomial $P : V \to \mathbb{F}_p$ of degree at most $k-1$ such that
    \[\Big|\sum_{x \in V} \chi(Q(x))\overline{\chi(P(x))}\Big| \geq (c/2)^{O_k(1)} |V|.\]
\end{corollary}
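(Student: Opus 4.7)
The plan is to run the classical inverse-theorem strategy for polynomial phases, using Theorem~\ref{biased_ml_form_approx-intro} as the quantitatively strong input at the step from analytic bias to algebraic structure. First I would reduce to the homogeneous case: writing $Q=\tilde Q+R$ with $\tilde Q$ homogeneous of degree $k$ and $\deg R<k$, the fact that the $k$-fold iterated derivative $\Delta_{h_1}\cdots\Delta_{h_k}$ annihilates $R$ gives $\|\chi\circ\tilde Q\|_{\mathsf{U}^k}=\|\chi\circ Q\|_{\mathsf{U}^k}\geq c$, and taking $P:=R$ reduces the target inequality to showing $|\mathbb{E}_x\chi(\tilde Q(x))|\geq(c/2)^{O_k(1)}$. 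Let $\alpha(h_1,\ldots,h_k):=\partial_{h_1}\cdots\partial_{h_k}\tilde Q$; since $\deg\tilde Q=k$ this is independent of $x$ and is a symmetric $k$-linear form on $V$. The identity $\|\chi\circ\tilde Q\|_{\mathsf{U}^k}^{2^k}=\on{bias}\alpha$ gives $\on{bias}\alpha\geq c^{2^k}$, and the polarization identity (valid because $p>k$) yields $\tilde Q(x)=\alpha(x,\ldots,x)/k!$.

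Next I would apply Theorem~\ref{biased_ml_form_approx-intro} to $\alpha$ with parameter $\varepsilon=c^{2^k}/2$. This produces multiaffine forms $\lambda_1,\ldots,\lambda_m$ with vanishing multilinear part and coefficients $c_1,\ldots,c_m$ satisfying $\sum_i|c_i|\leq c^{-2^k}$ and $m=O_k(c^{-O_k(1)})$, such that $\chi\circ\alpha$ is within $c^{2^k}/2$ in $L^2(V^k)$ of $\sum_i c_i\chi\circ\lambda_i$. From this $L^2$-approximation by phases of lower-complexity forms I would extract a genuine algebraic partition-rank bound on $\alpha$, of the form $\alpha=\sum_{j=1}^r\beta_j\gamma_j$ with each $\beta_j,\gamma_j$ multilinear on a proper subset of the $h$-variables and $r$ polynomial in $c^{-1}$. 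I would then exploit the symmetry of $\alpha$ together with $p>k$ to specialise to the diagonal $h_1=\cdots=h_k=x$: after symmetrising the decomposition, this converts the partition-rank bound into a product-rank decomposition $\tilde Q(x)=\tfrac{1}{k!}\sum_{j=1}^r B_j(x)C_j(x)$ with $\deg B_j,\deg C_j<k$, and a classical induction on $r$ (introducing a Fourier parameter for each product and using orthogonality of additive characters) then produces $|\mathbb{E}_x\chi(\tilde Q(x))|\geq p^{-O_k(r)}=(c/2)^{O_k(1)}$.

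The main obstacle will be the middle step, namely extracting a quantitative algebraic partition-rank bound for $\alpha$ from the $L^2$-approximation of Theorem~\ref{biased_ml_form_approx-intro}. The approximation is polynomially strong in $c^{-1}$ but holds only in a weak norm, whereas the descent to the diagonal needs honest algebraic structure of $\alpha$. I expect this conversion to go through either by iterating Theorem~\ref{biased_ml_form_approx-intro} at progressively finer scales, or by a Cauchy--Schwarz/bilinear-algebra argument showing that any $L^2$-small multiaffine residual of a multilinear form can be absorbed into boundedly many extra partition-rank summands.
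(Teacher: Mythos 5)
Your plan diverges from the paper's at the key step, and that divergence introduces a genuine gap. The paper's proof is considerably more direct: after noting $\|\chi\circ Q\|_{\mathsf{U}^k}^{2^k}=\on{bias}\mathcal{L}_Q\geq c^{2^k}$, it applies the polynomial form of the approximation theorem (Theorem~\ref{biased_poly_approx}) with the \emph{fixed} parameter $\varepsilon=1/2$ to write $\chi\circ Q\approx F=\sum_i c_i\chi\circ R_i$ in $L^2$ with each $\deg R_i\leq k-1$, observes by Cauchy--Schwarz that $|\ex_x\chi\circ Q\,\overline{F}|\geq 1-\|\chi\circ Q-F\|_{L^2}\geq 1/2$, and then by averaging over the $m=(c/2)^{-O_k(1)}$ terms extracts a single $R_i$ with correlation at least $1/(2m)$. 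No algebraic structure of $\mathcal{L}_Q$ beyond the $L^2$ approximation is ever invoked.

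Your proposal instead wants to convert the $L^2$ approximation into an honest partition-rank decomposition of $\alpha$ with rank $r$ polynomial in $c^{-1}$, then specialise to the diagonal. This is exactly the detour the paper is engineered to avoid. Two problems. First, the conversion from an $L^2$-approximation by lower-order phases to an algebraic partition-rank bound is not a routine Cauchy--Schwarz step; it is essentially the partition-rank vs.\ analytic-rank problem, and getting it with bounds even polynomial in $c^{-1}$ would be a substantial standalone result (the paper discusses this open problem explicitly in the introduction and stresses that its strategy is to \emph{work directly with bias} rather than pass to partition rank). Second, even granting a partition-rank bound $r=\mathrm{poly}(c^{-1})$, the classical rank-to-bias argument you invoke at the end gives only $|\ex_x\chi(\tilde Q(x))|\geq p^{-O_k(r)}$, which is $p^{-\mathrm{poly}(c^{-1})}$ --- exponentially weaker than the claimed $(c/2)^{O_k(1)}$. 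For your final line to be correct you would need $r=O_k(\log_p c^{-1})$, which is the linear partition-rank conjecture and is open. The fix is to drop the partition-rank detour entirely and correlate $\chi\circ Q$ directly with the approximant, as the paper does.
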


\vspace{\baselineskip}

\noindent\textbf{Brief history of the problem.} Results in the spirit of Theorem~\ref{main_thm} were first considered over integers. As a part of their programme for counting the number of solutions of simultaneous linear systems of equations in primes, Green and Tao~\cite{GreenTaoMob1, GreenTaoMob2} first proved that the M\"obius function is orthogonal to nilsequences (which are a more general class of functions, playing the role of polynomial phases in additive combinatorics in the integer setting). Namely, they showed that for all $A > 0$ and polynomial nilsequence $n \mapsto F(g(n)\Gamma) $
\[\frac{1}{N}\sum_{n \in [N]}\mu(n)F(g(n)\Gamma) = O(\log^{-A} N),\]
with an ineffective implicit constant, due to ineffective bounds on Siegel zeroes. More classically, $\sum_{n \in [N]} \mu(n)= o(N)$ is equivalent to the prime number theorem and correlations with linear phases were studied by Davenport~\cite{Davenport}.\\
 
Over the function fields, the first quadratic result was obtained by Beinvenu and Le~\cite{BL}, who proved a quasipolynomial bound
\[\frac{1}{p^n}\Big|\sum_{f \in G_n} \mu(f)\chi(Q(f))\Big| \leq  O(p^{-n^c}),\]
for an absolute constant $c > 0$, which relied on the bilinear Bogolyubov argument~\cite{BienvenuLeBog, BilinearBog, HosseiniLovett}, leading to bounds of the form above.\\
\indent More recently, Meilin~\cite{Meilin} obtained orthogonality of the M\"obius function with higher degree polynomials with a bound of similar shape, which stems from the use of the so-called partition vs. analytic rank problem~\cite{BhowLov, GreenTaoPolys, Janzer2, LukaPrank, MoshZhuRank}, which we shall discuss slightly later. In our work, we evade both the higher-order Bogolyubov argument and the use of the partition vs. analytic rank problem.\\

We also note results of Sawin and Shusterman~\cite{SawinShusterman} who proved $\sum_{f \in G_n} \mu(Q(f)) = o(q^n)$ in $\mathbb{F}_q[t]$, with power-saving bounds. However, their proof works only over fields $\mathbb{F}_q$ for sufficiently large powers $q$ of a fixed prime $p$ and, in particular, it does not cover the prime fields. Our proof works over any finite field, but we opted to present it in the case of prime fields for the sake of simplicity of notation.\\

\noindent\textbf{Equidistribution theory of multilinear forms.} A significant objective of the higher order Fourier analysis and additive combinatorics is the study of multilinear forms that are not quasirandom. The simplest way of defining quasirandomness in this context is to study the distribution of values of a multilinear form $\alpha : V_1 \tdt V_k \to \mathbb{F}_p$. It turns out that 0 is taken most often, and that non-zero values are taken an equal number of times. In particular, the bias of $\alpha$ determines the distribution of values of $\alpha$ completely. \\

There are also simple algebraic obstructions preventing a uniform distribution of values, coming from products of multilinear form of lower order. Namely, it is an easy exercise to show that if $\alpha(x_1, \dots, x_k)$ can be written as $\beta(x_I) \gamma(x_{[k] \setminus I})$ for a multilinear form $\beta$ depending on variables whose indices lie in a set $I\subseteq [k]$ and $\gamma$ is another multilinear form depending on the rest if variables, then $\on{bias} \alpha \geq 1/p$. Remarkably, this turns out to be essentially the only way having a large bias. Formally, we define the \textit{partition rank}~\cite{Naslund} of a multilinear form $\alpha$, denoted $\on{prank} \alpha$, as the least number $r$ such that $\alpha$ can be expressed as a sum of $r$ multilinear forms that factorize in the way above. It is not hard to show that $\on{prank} \alpha = r$ implies $\on{bias} \alpha \geq p^{-r}$. The following theorem shows the opposite implication.

\begin{theorem}\label{prankthm-intro}
    For each $k \in \mathbb{N}$ and $c > 0$ there exists a positive integer $K = K(k, c)$ for which the following holds. Let $\alpha : V_1\tdt V_k \to \mathbb{F}_p$ be a multilinear form such that $\on{bias} \alpha \geq c$. Then $\on{prank}\alpha \leq K$.
\end{theorem}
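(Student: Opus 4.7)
The plan is to combine Theorem~\ref{biased_ml_form_approx-intro} with the recently established structural theorem for dense varieties alluded to in the introduction. The strategy proceeds in three stages: the approximation theorem produces a multiaffine form $\lambda_i$ whose discrepancy with $\alpha$ has large bias; expanding this discrepancy in one variable reveals a dense subvariety; and applying the dense-variety structural theorem yields the desired partition rank bound on $\alpha$.

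First, apply Theorem~\ref{biased_ml_form_approx-intro} with $\varepsilon = 1/2$. This produces multiaffine forms $\lambda_1, \dots, \lambda_m$ with vanishing multilinear part and coefficients $c_1, \dots, c_m$ with $\sum_i |c_i| \leq c^{-1}$ and $m \leq 2^{25}\cdot 2^{14} c^{-14}$, such that
\[
\Bigl\| \chi \circ \alpha - \sum_{i \in [m]} c_i \chi \circ \lambda_i \Bigr\|_{L^2} \leq \tfrac{1}{2}.
\]
Pairing both sides with $\chi \circ \alpha$, whose $L^2$-norm equals $1$, and applying Cauchy--Schwarz yields $\bigl|\sum_i c_i \,\overline{\on{bias}(\alpha - \lambda_i)}\bigr| \geq 1/2$. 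A weighted pigeonhole argument against the bound $\sum_i |c_i| \leq c^{-1}$ then produces an index $i$ with $|\on{bias}(\alpha - \lambda_i)| \geq c/2$.

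Fix such an $i$ and set $\beta := \alpha - \lambda_i$, a multiaffine form on $V_1 \tdt V_k$ whose $k$-multilinear part is precisely $\alpha$ (since $\lambda_i$'s multilinear part vanishes). Expand $\beta$ in the last variable as $\beta(\vec{x}, x_k) = \Phi(\vec{x})(x_k) + \nu(\vec{x})$, where $\Phi : V_1 \tdt V_{k-1} \to V_k^*$ is multiaffine and $\nu$ is the $x_k$-independent multiaffine remainder; one checks that the $(k-1)$-multilinear part $\Phi_{\mathrm{mult}}$ satisfies $\Phi_{\mathrm{mult}}(\vec{x})(v) = \alpha(\vec{x}, v)$. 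Integrating out $x_k$ using orthogonality of characters on $V_k$ gives
\[
\on{bias}(\beta) = \ex_{\vec{x}} \chi(\nu(\vec{x})) \cdot \mathbbm{1}\bigl[\Phi(\vec{x}) = 0\bigr],
\]
so the variety $W := \{\vec{x} \in V_1 \tdt V_{k-1} : \Phi(\vec{x}) = 0\}$ has density at least $c/2$.

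Finally, apply the dense-variety structural theorem to $W$. The density of $W$ forces its defining data, and thereby $\Phi_{\mathrm{mult}}$, to have partition rank bounded by some $K = K(k, c)$ depending only on $k$ and $c$. Transferring via $\Phi_{\mathrm{mult}}(\vec{x})(v) = \alpha(\vec{x}, v)$ yields $\on{prank}\alpha \leq K(k, c)$. The main obstacle is this last step: converting density of $W$ into a dimension-independent partition rank bound is precisely the content of the dense-variety result, and care is needed to ensure the output bound depends only on $k$ and $c$ and not on any of the ambient dimensions $\dim V_j$.
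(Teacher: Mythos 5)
This theorem is not proved in the paper at all: it is a known result from the literature, stated only for context and cited to Green--Tao, Kaufman--Lovett and Bhowmick--Lovett (with later quantitative refinements). Indeed the paper emphasizes that it deliberately \emph{avoids} invoking Theorem~\ref{prankthm-intro}, because any application in their main argument would require $K$ to depend linearly on $\log_p c^{-1}$, which is a well-known open problem.

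Your attempt has a genuine gap at its final step. Steps one through three are sound: Theorem~\ref{biased_ml_form_approx-intro} with $\varepsilon = 1/2$, Cauchy--Schwarz against $\chi\circ\alpha$, and weighted pigeonhole produce an index $i$ with $\lvert\on{bias}(\alpha - \lambda_i)\rvert \geq c/2$; writing $\beta = \alpha - \lambda_i$, expanding in $x_k$, and integrating out $x_k$ then shows that the zero set $W$ of the multiaffine map $\Phi$ has density at least $c/2$. But no tool at your disposal converts ``$W$ is dense'' into ``$\Phi_{\mathrm{mult}}$ has bounded partition rank''. Theorem~\ref{weakinverse} (the weak structural result from~\cite{LukaOptimalWeak}) only guarantees that a dense \emph{multilinear} variety \emph{contains} a low-codimensional multilinear subvariety; it gives no information about the partition rank of the forms that cut the variety out, and moreover $W$ here is cut out by a multi\emph{affine} map $\Phi$, so the hypotheses do not even match. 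The ``fibers'' theorems that would perform the conversion you want (Kazhdan--Ziegler, Gowers--Mili\'cevi\'c, Mili\'cevi\'c) all themselves rely on the partition vs.\ analytic rank problem, as the paper points out in the remark after Theorem~\ref{varietyofbiasedmaps}. In short, the passage from dense vanishing of $\Phi$ to a bound on $\on{prank}(\alpha)$ is essentially as strong as Theorem~\ref{prankthm-intro} itself, so the argument is circular. A useful sanity check: if your route were valid, the polynomial-in-$c^{-1}$ bound of Theorem~\ref{biased_ml_form_approx-intro} would propagate to a polynomial bound for $K(k,c)$, resolving a central open problem --- that alone should flag that something must be missing.
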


The problem of relating these two quantities, bias and the partition rank, has since become known as the \textit{partition vs.\ analytic rank problem}. This result was obtained first for polynomials rather than multilinear forms by Green and Tao~\cite{GreenTaoPolys}. Kaufman and Lovett~\cite{KaufmanLovett} refined their approach and Bhowmick and Lovett~\cite{BhowLov} generalized it to the multilinear setting. The bounds on $K$ in terms of $c$ were of Ackermannian-type and significant quantitative improvements were obtained since~\cite{Janzer2, LukaPrank, MoshZhuRank}. However, the key open problem concerning biased multilinear forms, stated several times~\cite{AdipKazhZie, KazhZie, LamZie, Lovett}, remains to show that the partition rank is linear in terms of $\log_p c^{-1}$. For example, in order to keep power-saving bounds in our main result (Theorem~\ref{main_thm}), any application of Theorem~\ref{prankthm-intro} would require such a linear dependence.\\

Recently, in~\cite{LukaOptimalWeak}, the first-named author obtained a weak structural result for biased multilinear forms with linear bounds (see Theorem~\ref{weakinverse}), which informally speaking, identifies half of the lower-order forms required in an optimal partition rank decomposition. One of the key takeaways of our paper is that, even when understanding of structural properties of multilinear forms is needed, one can work directly with the bias of a multilinear form instead of passing to the partition rank, and use the mentioned weak structural result and approximation theorem (Theorem~\ref{biased_ml_form_approx-intro}) for biased multilinear forms in place of the partition vs.\ analytic rank problem.\\

\subsection{Proof overview}

The proof consists of 4 main steps. Overall, we induct over the degree $k$ of polynomial $Q$. During the proof, we study the symmetric multilinear form $\mathcal{L}_Q$, which is naturally associated to $Q$. The overall strategy is to show that if $\frac{1}{p^n}\Big|\sum_{f \in G_n} \mu(f)\chi(Q(f))\Big|$ is large, then the bias of $\mathcal{L}_Q$ is large as well. Once we obtain that conclusion, we may replace $Q$ by a polynomial of lower order $\tilde{Q}$ and still have a significant corelation $\frac{1}{p^n}\Big|\sum_{f \in G_n} \mu(f)\chi(\tilde{Q}(f))\Big|$. The induction hypothesis will then give a contradiction. Thus, showing that  $\mathcal{L}_Q$ is biased is key, and we now give an outline of that argument.

\begin{itemize}
    \item[\textbf{Step 1.}] \textit{Number-theoretic reduction to the study of multilinear forms.} Following Bienvenu and L\^{e}, who adapted an argument of Green and Tao to the function field setting, we use Vaughan's identity to reduce the problem to a study of a certain multilinear form. Namely, we deduce that the multilinear map in $2k$ variables $a_1, \dots, a_k, x_1, \dots, x_k$, defined by $\sum_{\pi \in \on{Sym}_k}\mathcal{L}_Q(a_1 x_{\pi(1)}, \dots, a_k x_{\pi(k)})$ is biased. The action of the symmetric group $\on{Sym}_k$ naturally appears as a consequence of an application of the Gowers-Cauchy-Schwarz inequality.
    \item[\textbf{Step 2.}] \textit{Finding a low-codimensional variety of biased multilinear lower-order forms.} We now depart from the previous works on this problem, and consider variables $a_i$ and $x_i$ separately. Namely, we show that there exists a low-codimensional variety whose points $(a_1, \dots, a_k)$ have the property that the form
    \begin{equation}\label{biasedmapsintrovariety}
        (x_1, \dots, x_k) \mapsto \sum_{\pi \in \on{Sym}_k}\mathcal{L}_Q(a_1 x_{\pi(1)}, \dots, a_k x_{\pi(k)})
    \end{equation}
    has large bias. This part of the proof relies on the above-mentioned recent weak structural result for biased multilinear forms~\cite{LukaOptimalWeak}, showing that dense multilinear varieties contain low-codimensional varieties with strong bounds. The argument also involves a dependent random choice argument, as well as consideration of directional convolutions.
    \item[\textbf{Step 3.}] \textit{Deducing that $\mathcal{L}_Q$ has large bias.} The structure of a variety from the previous step allows us to use the Chevalley-Warning theorem to obtain a special basis of $G_n$, with respect to which the multilinear form $\mathcal{L}_Q$ has particularly nice properties and we may deduce that the bias of $\mathcal{Q}$ is large by carefully combining appropriate restrictions of the form~\eqref{biasedmapsintrovariety}.\\
    \phantom{\,}\hspace{6pt} This argument is inspired by a similar one in the quadratic setting by Bienvenu and L\^{e}. However, their argument used a less efficient quasirandomness argument in place of the Chevalley-Warning theorem and was specialized to the case of matrices, where rank is the right notion of structure. The natural generalization of their argument would have to use partition rank of multilinear forms, but as discussion surrounding Theorem~\ref{prankthm-intro} suggests, to get strong bounds, we have to avoid this and work with bias directly. The structure of the symmetric group $\on{Sym}_k$ for $k > 2$ also complicates the analysis further.
    \item[\textbf{Step 4.}] \textit{Solving the biased polynomial special case.} In this step, we use the approximation theorem for biased multilinear forms (Theorem~\ref{biased_ml_form_approx-intro}) to replace $Q$ by a polynomial of lower order, and, as remarked above, complete the proof by invoking the inductive hypothesis. The proof is a combination of a probabilistic approach involving Hoeffding inequality and external approximation of dense varieties (which is in turn based on dependent random choice) and a careful algebraic manipulation of identities that hold for multilinear forms.
\end{itemize}

\noindent\textbf{Paper organization.} The paper organization follows the outline above. We begin with a preliminary section, where we gather useful auxiliary results. After that, there are four sections, each corresponding to a step of the proof. Finally, we devote a section to putting all ingredients together and completing the proof of Theorem~\ref{main_thm}.\\ 

\noindent\textbf{Acknowledgements.} This research was supported by the Ministry of Science, Technological Development and Innovation of the Republic of Serbia through the Mathematical Institute of the Serbian Academy of Sciences and Arts, and by the Science Fund of the Republic of Serbia, Grant No.\ 11143, \textit{Approximate Algebraic Structures of Higher Order: Theory, Quantitative Aspects and Applications} - A-PLUS.

\section{Preliminaries}

\noindent\textbf{Notation.} We frequently use sequence shorthand notation such as $X_{[k]}$, $x_{[k]}$, $X_I$, $x_I$. Namely, when $X_1, \dots, X_k$ are sets, we write $X_{[k]}$ for the product $X_1 \tdt X_k$. More generally, given a non-empty index set $I \subseteq [k]$, we write $X_I = \prod_{i \in I} X_i$. For sequences of elements, we write $x_{[k]}$ for the tuple $(x_1, \dots, x_k)$. Finally, $x_I$ stands for the sub-sequence $(x_i : i \in I)$.\\

For a polynomial $Q$ of degree $k$ we denote by $\mathcal{L}_Q$ its \textit{associated symmetric multilinear form}. Using the \textit{discrete additive derivative} notation, defined as $\Delta_a F(x) = F(x + a) - F(x)$ whenever $F$ is a map between abelian groups $G$ and $H$ and $a \in G$ is the \textit{shift}, we may define $\mathcal{L}_Q$ as $\mathcal{L}_Q(a_1, \dots, a_k) = (k!)^{-1}\Delta_{a_1} \Delta_{a_2} \dots \Delta_{a_k}Q(x)$, noting that the latter expression is independent of $x$. Note that if $Q = Q_{\on{hom}} + Q_{\on{low}}$ where $Q_{\on{hom}}$ is homogeneous of degree $k$ and $Q_{\on{low}}$ is of degree at most $k - 1$, then we have the identity $Q_{\on{hom}}(x) = \mathcal{L}_Q(x, x, \dots, x)$.\\

We also write $\mathbb{D}$ for the unit disk and frequently use averaging notation $\ex_{x \in X}$ as a shorthand for $\frac{1}{|X|} \sum_{x \in X}$. All implicit constants in the paper can be assumed to depend on $k$ unless otherwise specified.\\

\noindent\textbf{Preliminaries on multilinear forms.} When $U_1, \dots, U_k$ are vector spaces over $\mathbb{F}_p$, a \textit{multilinear form} is a map $\psi : U_1\tdt U_k \to \mathbb{F}_p$ which is linear in each variable separately.\\
\indent To stress the role of $x_1, x_2, \dots,  x_k$ as arguments, we write $\on{bias}_{x_1 \in U_1, x_2 \in U_2, \dots, x_k \in U_k} \psi$ for 
\[\ex_{x_1 \in U_1, x_2 \in U_2, \dots, x_k \in U_k} \chi\Big(\psi(x_1, x_2, \dots, x_k)\Big),\]
where $\chi$ is the chosen additive character. This notation is useful when $\psi$ is defined implicitly, such as $(x, y) \mapsto \beta(a x, by)$, when $a,b \in G_m$ are fixed and $x,y \in G_{n - m}$.\\

By a \textit{multilinear variety} we mean a variety which is the zero set of several forms, each of which is multilinear but in a subset of variables. If all forms depend on all variables, we say that the variety is \textit{strictly-multilinear}. If a variety is defined by at most $r$ forms, we say that it is of \textit{codimension at most $r$}.\\

The most basic fact about multilinear varieties of bounded codimension is that they are necessarily dense.

\begin{lemma}[Bounded codimension implies density]\label{bddcodimbound}
    Let $W \subseteq U_{[k]}$ be a multilinear variety of codimension $r$. Then $|W| \geq p^{-r}|U_{[k]}|$.
\end{lemma}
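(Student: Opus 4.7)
The plan is to induct on $k$, the number of ambient variable spaces in the product $U_{[k]}$. The key structural input is that multilinearity is preserved under fixing a variable: if $\psi : U_I \to \mathbb{F}_p$ is multilinear and $j \in I$, then for each fixed $x_j \in U_j$ the restriction $\psi(\ldots, x_j, \ldots)$ remains multilinear in $x_{I \setminus \{j\}}$, because linearity in the surviving variables is untouched when one other variable is frozen. This observation is all that is needed to make the induction run.

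The base case $k = 1$ is immediate: every defining form is a linear functional on $U_1$, so $W$ is an intersection of at most $r$ kernels, hence a subspace of codimension at most $r$, and $|W| \geq p^{-r}|U_1|$.

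For the inductive step, write the defining forms of $W$ as $\psi_1, \ldots, \psi_r$, with $\psi_i$ multilinear on $U_{I_i}$. For each $x_1 \in U_1$ consider the slice $W_{x_1} = \{x_{[2,k]} \in U_{[2,k]} : (x_1, x_{[2,k]}) \in W\}$. It is cut out by the forms $\psi_i$ with $1 \notin I_i$ (unchanged) together with the restrictions $\psi_i(x_1, \cdot)$ for those $i$ with $1 \in I_i$, each of which remains multilinear in its surviving variables by the structural observation above (and some may become trivially zero, which only helps). Hence $W_{x_1}$ is a multilinear variety in $U_{[2,k]}$ of codimension at most $r$, so the inductive hypothesis yields $|W_{x_1}| \geq p^{-r}|U_{[2,k]}|$; summing over $x_1 \in U_1$ gives $|W| \geq p^{-r}|U_{[k]}|$.

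There is no genuine obstacle, but it is worth noting why the superficially attractive Fourier route does \emph{not} close the argument in this generality, so that the variable-by-variable induction is really needed. One would like to expand the indicator of $W$ as $p^{-r}\sum_{c \in \mathbb{F}_p^r} \chi\bigl(\sum_i c_i \psi_i\bigr)$ and isolate the $c = 0$ contribution, bounding the remaining terms via non-negativity of the bias of multilinear forms. However, any $\psi_i$ with $I_i \neq [k]$ is multilinear only on $U_{I_i}$ and behaves as a constant in variables outside $I_i$, so the combination $\sum_i c_i \psi_i$ is only multi-\emph{affine} on $U_{[k]}$ and its bias can be genuinely complex. (In the strictly-multilinear case the Fourier argument does work and gives the same bound; the induction handles the general case uniformly.)
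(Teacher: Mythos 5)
There is a genuine gap. You claim that for \emph{every} $x_1 \in U_1$ the slice $W_{x_1}$ is a multilinear variety of codimension at most $r$ in $U_{[2,k]}$, and hence has density at least $p^{-r}$; you then sum over all of $U_1$. But if some defining form has $I_i = \{1\}$, then after fixing $x_1$ the ``restriction'' $\psi_i(x_1)$ is a \emph{constant} in $\mathbb{F}_p$ with no surviving variables, and this constant need not be zero. When it is nonzero, the slice $W_{x_1}$ is empty, and the inductive hypothesis gives nothing. Your parenthetical ``(and some may become trivially zero, which only helps)'' considers only the favourable outcome; it ignores the fatal one. A concrete counterexample to the intermediate claim: take $k=2$, $W = \ker\ell \times U_2$ for a nonzero linear functional $\ell$ on $U_1$, codimension $r=1$. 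For $x_1 \notin \ker \ell$ the slice is empty, not of density $\geq p^{-1}$.

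The repair is exactly the bookkeeping the paper does, and once done, your slicing idea goes through. Partition the defining forms by whether the sliced variable occurs in $I_i$ or not; say $r_1$ do and $r_2 = r - r_1$ do not. The forms that avoid the sliced variable cut out a multilinear variety $W_2 \subseteq U_{[2,k]}$ of codimension at most $r_2$, to which the inductive hypothesis applies, giving $|W_2| \geq p^{-r_2}|U_{[2,k]}|$. Then, for each $x_{[2,k]} \in W_2$, the remaining $r_1$ forms become \emph{linear} functionals in the sliced variable alone, and crucially each vanishes at the zero vector (a multilinear form vanishes when any argument is zero), so they cut out a nonempty subspace of codimension at most $r_1$. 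This gives at least $p^{-r_1}|U_1|$ admissible choices of the sliced variable for each point of $W_2$, and multiplying recovers $|W| \geq p^{-r}|U_{[k]}|$. In short: you must slice only within the ``partial'' variety defined by the forms that ignore the sliced variable, rather than over the whole ambient space, and you must observe that the constraints on the sliced variable are homogeneous linear, hence always satisfiable. With that correction your argument coincides with the paper's.
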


\begin{proof} We prove the claim by induction on $k$, the base case $k = 1$ being a simple linear-algebraic fact. Suppose thus that $k > 1$ and that the claim holds for fewer variables.\\
    \indent Let $W$ be defined by multilinear forms $\alpha_i : U_{I_i} \to \mathbb{F}_p$, for $i \in [r]$. Hence, we may write $W = W_1 \cap (W_2 \times U_k)$, where $W_1 = \{x_{[k]} \in U_{[k]} : (\forall i \in [r])\,\, k \in I_i \implies \alpha_i(x_{I_i}) = 0\}$ and $W_2 = \{x_{[k-1]} \in U_{[k-1]} : (\forall i \in [r]) \,\,k \notin I_i \implies \alpha_i(x_{I_i}) = 0\}$. Thus, for some $r_1 + r_2 \leq r$, the variety $W_1$ has codimension at most $r_1$ and the variety $W_2$ has codimension at most $r_2$. Note that for each $x_{[k-1]} \in W_2$, the set of $x_k \in U_k$ such that $x_{[k]} \in W$ is a subspace of $U_k$ of codimension at most $r_1$. Hence, for each $x_{[k-1]} \in W_2$, we have at least $p^{-r_1}|U_k|$ elements $x_k \in U_k$ with $x_{[k]} \in W$. By induction hypothesis, $|W_2| \geq p^{-r_2}|U_{[k-1]}|$, so the claim follows.
\end{proof}

We need two further elementary observations regarding multilinear forms and their restrictions. Firstly, we show that restrictions do not decrease bias.

\begin{lemma}\label{formsRestrictionLemma1}
    Let $V_{[k]}$ be vector spaces and let $U_i \leq V_i$ for each $i \in [k]$. Let $\alpha : V_{[k]} \to \mathbb{F}_p$ be a multilinear form and define $\beta : U_{[k]} \to \mathbb{F}_p$ by restricting $\alpha$. Then $\on{bias} \beta \geq \on{bias} \alpha$.
\end{lemma}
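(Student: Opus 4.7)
Since restrictions of multilinear forms are again multilinear, it suffices by iteration to treat the one-coordinate case: $U_j = V_j$ for every $j$ except one (say the last), with $U \leq V_k$. The reduced statement is then $\on{bias} \gamma \geq \on{bias} \alpha$, where $\gamma$ denotes the restriction of $\alpha$ to $V_{[k-1]} \times U$.

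To handle this, I would regard the partial average as a function of the final variable. Define $g : V_k \to \mathbb{C}$ by
\[g(y) = \ex_{x_{[k-1]} \in V_{[k-1]}} \chi\big(\alpha(x_{[k-1]}, y)\big),\]
so that $\on{bias} \alpha = \ex_{y \in V_k} g(y)$ while $\on{bias} \gamma = \ex_{u \in U} g(u)$. The plan is to Fourier-expand $g$ on the additive group $V_k$ and to observe that every coefficient $\hat g(\xi)$ is non-negative, whence the standard identity
\[\ex_{u \in U} g(u) = \sum_{\xi \in U^\perp} \hat g(\xi), \qquad U^\perp = \{\xi \in V_k^* : \xi|_U \equiv 0\},\]
immediately yields the desired inequality, as the $\xi = 0$ term recovers $\ex_{y \in V_k} g(y)$ and all other contributions are non-negative.

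The key computation is that for each $\xi \in V_k^*$, swapping the order of averaging and using that $y \mapsto \alpha(x_{[k-1]}, y) - \xi(y)$ is a linear functional on $V_k$ (hence averages to $1$ when it is identically zero and to $0$ otherwise) gives
\[\hat g(\xi) = \ex_{x_{[k-1]}, y} \chi\big(\alpha(x_{[k-1]}, y) - \xi(y)\big) = \Pr_{x_{[k-1]} \in V_{[k-1]}}\big(\alpha(x_{[k-1]}, \cdot) = \xi \text{ in } V_k^*\big) \geq 0.\]
The only conceptual step is recognising that $\hat g(\xi)$ is a probability; everything else is routine bookkeeping, and no obstacle more serious than this is expected.
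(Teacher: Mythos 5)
Your proof is correct, and it takes a genuinely different route from the paper. You reduce to the one-coordinate case and then Fourier-expand the partial phase average $g(y) = \ex_{x_{[k-1]}}\chi(\alpha(x_{[k-1]},y))$ on the last coordinate, noting that each $\hat g(\xi)$ is the probability that the slice linear form $\alpha(x_{[k-1]},\cdot)$ equals $\xi$, hence non-negative; the inequality then falls out of $\ex_{u\in U}g(u)=\sum_{\xi\in U^\perp}\hat g(\xi)\ge\hat g(0)=\on{bias}\alpha$. The paper instead splits every coordinate simultaneously via $V_i = U_i\oplus T_i$, expands $\alpha$ multilinearly over the sums, applies the triangle inequality in $t_{[k]}$, and then cites Lovett's inequality (Lemma 2.1 of \cite{Lovett}) that the absolute average of a multiaffine phase is bounded by the bias of its multilinear part. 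Your argument is more self-contained --- it avoids Lovett's lemma and uses only the orthogonality of characters and Fourier inversion on the quotient --- while the paper's proof is structurally parallel to the complementary Lemma~\ref{formsRestrictionLemma2}, which is likely why the authors chose the subspace-splitting route. Both are valid; yours could serve as a drop-in replacement with no loss.
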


\begin{proof}
    Let $T_i \leq V_i$ be such that $V_i = T_i \oplus U_i$. Then, after expansion and the triangle inequality
    \begin{align*}\on{bias} \alpha =& \exx_{x_{[k]} \in V_{[k]}} \chi(\alpha(x_{[k]}))\\
    = & \exx_{u_{[k]} \in U_{[k]}, t_{[k]} \in T_{[k]}} \chi(\alpha(u_1 + t_1, \dots, u_k + t_k)) = \exx_{t_{[k]} \in T_{[k]}} \Big(\exx_{u_{[k]} \in U_{[k]}} \chi\Big(\alpha(u_{[k]}) + \alpha(u_1,t_{[2,k]}) + \dots + \alpha(t_{[k]})\Big)\Big)\\
    \leq & \exx_{t_{[k]} \in T_{[k]}} \Big|\exx_{u_{[k]} \in U_{[k]}} \chi\Big(\alpha(u_{[k]}) + \alpha(u_1,t_{[2,k]}) + \dots + \alpha(t_{[k]})\Big)\Big|.\end{align*}
    Note that $\alpha(u_{[k]}) = \beta(u_{[k]})$. But, by an inequality of Lovett (Lemma 2.1. in~\cite{Lovett}), we have for each $t_{[k]} \in T_{[k]}$ 
    \[\Big|\exx_{u_{[k]} \in U_{[k]}} \chi\Big(\beta(u_{[k]}) + \alpha(u_1,t_{[2,k]}) + \dots + \alpha(t_{[k]})\Big)\Big| \leq \on{bias} \beta.\qedhere\]
\end{proof}

Secondly, we show that restrictions to low-codimensional subspaces do not increase bias dramatically. For a finite-dimensional vector space $V$ over $\mathbb{F}_p$, we write $\cdot : V \times V \to \mathbb{F}_p$ for a dot product, which is a symmetric, non-degenerate bilinear form. Note that linear forms $V \to \mathbb{F}_p$ are precisely the maps of the form $x \mapsto a \cdot x$ for a fixed $a \in V$.

\begin{lemma}\label{formsRestrictionLemma2}
    Let $V_{[k]}$ be vector spaces and let $U_i \leq V_i$ for each $i \in [k]$. Let $\alpha : V_{[k]} \to \mathbb{F}_p$ be a multilinear form and define $\beta : U_{[k]} \to \mathbb{F}_p$ by restricting $\alpha$. Then $\on{bias} \alpha \geq p^{-\sum_{i \in [k]}( \dim V_i - \dim U_i)} \on{bias} \beta$.
\end{lemma}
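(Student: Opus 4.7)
The plan is to dualize the restriction: express the indicator of $U_{[k]}$ inside $V_{[k]}$ as a product of Fourier expansions and apply Lovett's inequality to each resulting term, in the same spirit as the proof of Lemma~\ref{formsRestrictionLemma1}.

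To carry this out, set $d_i = \dim V_i - \dim U_i$ and $d = \sum_{i \in [k]} d_i$. Let $L_i \leq V_i^*$ be the space of linear functionals on $V_i$ that vanish on $U_i$, so that $|L_i| = p^{d_i}$, and note that standard Fourier inversion gives
\[\mathbf{1}_{U_i}(x_i) = \frac{1}{p^{d_i}} \sum_{\ell_i \in L_i} \chi(\ell_i(x_i)).\]
Since $|U_{[k]}|/|V_{[k]}| = p^{-d}$, the bias of $\beta$ may be rewritten as
\[\on{bias} \beta = p^d \exx_{x \in V_{[k]}} \mathbf{1}_{U_{[k]}}(x) \chi(\alpha(x)) = \sum_{\ell_1 \in L_1, \dots, \ell_k \in L_k} \exx_{x \in V_{[k]}} \chi\Big(\alpha(x) + \sum_{i \in [k]} \ell_i(x_i)\Big),\]
after expanding $\mathbf{1}_{U_{[k]}} = \prod_{i \in [k]} \mathbf{1}_{U_i}$ and multiplying out.

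For each fixed tuple $(\ell_1, \dots, \ell_k)$, the expression inside the character is $\alpha$ plus a sum of terms $\ell_i(x_i)$, each depending on a single variable $x_i$ (hence on a proper subset of $[k]$ once $k \geq 2$). Lovett's inequality therefore applies verbatim, as in the previous lemma, and bounds the inner expectation by $\on{bias} \alpha$ in absolute value. Applying the triangle inequality to the sum of $p^d$ such terms gives $|\on{bias} \beta| \leq p^d \on{bias} \alpha$, which rearranges to the claimed inequality since both biases are real and non-negative for multilinear forms. I do not anticipate a serious obstacle: the proof pays exactly one factor of $p$ per unit of codimension through the size of the annihilator $L_i$, while the low-order correction terms it introduces are harmlessly absorbed by Lovett's inequality.
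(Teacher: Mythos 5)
Your argument is correct and genuinely different from the one in the paper. The paper's proof works via the characterization of bias as a density: it writes $\alpha(x_{[k]})=A(x_{[k-1]})\cdot x_k$, observes that $\on{bias}\alpha$ is the density of $\{x_{[k-1]}\in V_{[k-1]}:A(x_{[k-1]})=0\}$ while $\on{bias}\beta$ is the density of $\{x_{[k-1]}\in U_{[k-1]}:A(x_{[k-1]})\in U_k^\perp\}$, and then compares the two by a one-variable-at-a-time linear-algebra count (each fibre of a linear map to $V_k$ has size at most $|\ker|$, so landing in $U_k^\perp$ costs a factor $|U_k^\perp|$), together with the crude enlargement $U_{[k-1]}\subseteq V_{[k-1]}$. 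Your route instead dualizes: you Fourier-expand the indicator $\mathbf{1}_{U_{[k]}}$ over the annihilators $L_i$, reducing the estimate to a sum of $p^d$ biases of multiaffine perturbations of $\alpha$ by single-variable linear terms, each controlled by Lovett's Lemma~2.1 --- precisely the tool the paper already invokes for Lemma~\ref{formsRestrictionLemma1}. This is a clean unification: both restriction lemmas become corollaries of Lovett's inequality under a single Fourier-analytic umbrella, whereas the paper treats them by two somewhat different elementary arguments. The one caveat you correctly flag parenthetically is that for $k=1$ the added terms $\ell_i(x_i)$ are not of lower order, so Lovett's bound does not apply and the argument (and indeed the lemma, as a statement about linear forms) breaks; since the paper's own proof also tacitly assumes $k\geq 2$ (it refers to the map $y_{k-1}\mapsto A(x_{[k-2]},y_{k-1})$), and the lemma is only ever used with $k\geq 2$, this is not a real defect. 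Your dualized version has the advantage of being shorter and requiring no manipulation of the associated vector-valued map $A$; the paper's version has the advantage of making the geometric content of the bias explicit.
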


\begin{proof}
    As in the previous proof, let $T_i \leq V_i$ be such that $V_i = T_i \oplus U_i$. Let $A : V_{[k-1]} \to V_k$ be the multilinear map such that $\alpha(x_{[k]}) = A(x_{[k-1]}) \cdot x_k$. Then
    \begin{align*}\on{bias} \alpha = &\exx_{x_{[k-1]} \in V_{[k-1]}} \Big(\exx_{x_k \in V_k} \chi(A(x_{[k-1]}) \cdot x_k)\Big) = \exx_{x_{[k-1]} \in V_{[k-1]}} \id(A(x_{[k-1]}) = 0) \\
    = &\frac{|\{x_{[k-1]} \in V_{[k-1]} : A(x_{[k-1]}) = 0\}|}{|V_{[k-1]}|}.\end{align*}
    On the other hand, for $\beta$, recalling the notation $S^\perp = \{x: (\forall s \in S) x \cdot s = 0\}$, we get
    \begin{align*}\on{bias} \beta = &\exx_{x_{[k-1]} \in U_{[k-1]}} \Big(\exx_{x_k \in U_k} \chi(A(x_{[k-1]}) \cdot x_k)\Big) = \exx_{x_{[k-1]} \in U_{[k-1]}} \id(A(x_{[k-1]}) \in U_{k}^\perp) \\
    =& \frac{|\{x_{[k-1]} \in U_{[k-1]} : A(x_{[k-1]}) \in U_k^\perp\}|}{|U_{[k-1]}|}.\end{align*}

    Note that for each $x_{[k-2]}$, the map on $V_{k-1}$ given by $y_{k-1} \mapsto A(x_{[k-2]}, y_{k-1})$ is linear, so zero is taken the largest number of times. Hence 
    \[|\{x_{[k-1]} \in U_{[k-1]} : A(x_{[k-1]}) \in U_k^\perp\}| \leq |U_k^\perp| |\{x_{[k-1]} \in U_{[k-1]} : A(x_{[k-1]}) = 0 \}|.\]
    It follows that 
    \begin{align*}\on{bias} \beta \leq & |U_k^\perp| \frac{|\{x_{[k-1]} \in U_{[k-1]} : A(x_{[k-1]}) = 0 \}|}{|U_{[k-1]}|} = \frac{|V_{[k-1]}|}{|U_{[k-1]}|} |U_k^\perp| \frac{|\{x_{[k-1]} \in U_{[k-1]} : A(x_{[k-1]}) = 0 \}|}{|V_{[k-1]}|}\\
    = & p^{\sum_{i \in [k]} (\dim V_i - \dim U_i)}\on{bias} \alpha.\qedhere\end{align*}
\end{proof}

Recently, the first-named author proved that dense multilinear varieties contain multilinear subvarieties of bounded codimension with optimal bounds up to multiplicative constant. 

\begin{theorem}[Weak structural result with linear bounds, Theorem 2 in~\cite{LukaOptimalWeak}] \label{weakinverse} For each $d \in \mathbb{N}$ there exists a positive integer $K = K(k)$ (independent of $p$) for which the following holds. Let $V \subseteq U_1\tdt U_k$ be a multilinear variety of density $c$. Then $V$ contains a multilinear variety of codimension at most $K (\log_{p} c^{-1} + 1)$. \end{theorem}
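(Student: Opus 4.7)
\emph{Plan.} I would prove this by induction on $k$. For the base case $k=1$, a multilinear variety is a linear subspace (when nonempty), and its density $c$ forces its codimension to be at most $\log_p c^{-1}$; one may take $V^* = V$ itself with $K(1) = 1$. For the inductive step, write $V = \{x_{[k]} \in U_{[k]} : \alpha_i(x_{I_i}) = 0,\ i \in [r]\}$ for multilinear forms $\alpha_i$ on index sets $I_i \subseteq [k]$, and observe that for each $x_k \in U_k$ the slice $V_{x_k} \subseteq U_{[k-1]}$ is itself a multilinear variety, cut out by the forms $\alpha_i(\,\cdot\,, x_k)$ (for $i$ with $k \in I_i$), which are multilinear in $x_{I_i \setminus \{k\}}$, together with those $\alpha_i$ that do not involve $x_k$. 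By averaging, the set $S = \{x_k \in U_k : |V_{x_k}| \geq (c/2)|U_{[k-1]}|\}$ has density at least $c/2$ in $U_k$.

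By the inductive hypothesis, for each $x_k \in S$ the slice $V_{x_k}$ contains a multilinear subvariety $V^*_{x_k}$ of codimension at most $K(k-1)\log_p(2/c)$. To glue these slicewise subvarieties into a single multilinear subvariety of $V$ in all $k$ variables, my plan is to choose the defining forms of $V^*_{x_k}$ so that they depend multilinearly on $x_k$. The restrictions $\alpha_i(\,\cdot\,, x_k)$ already do; the difficulty is that the inductive construction may involve taking linear combinations of such restrictions, and the chosen combinations could in principle depend on $x_k$ non-multilinearly. Granting a canonical selection preserving multilinearity in $x_k$, one then restricts $x_k$ to a multilinear subvariety of $U_k$ of low codimension contained in (a structured subset of) $S$, which is itself an instance of the $k = 1$ case, producing a multilinear subvariety of $V$ in $U_{[k]}$ of codimension at most $K(k-1)\log_p(2/c) + O(\log_p c^{-1})$, so that $K(k) = K(k-1) + O(1)$ suffices.

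The main obstacle is this uniformization step: arranging the inductively-chosen slicewise subvarieties to combine into a bona-fide multilinear object with only a bounded loss of codimension per level of induction. Simple examples such as $V = \{(x,y) : x_1 y_i = 0 \text{ for all } i\}$, whose natural codimension-$1$ subvariety $\{x_1 = 0\}$ is cut out by a form that is not literally among $V$'s defining ones, show that the desired sub-variety's defining forms may be genuinely new multilinear forms tailored to $V$'s density pattern, rather than restrictions of the originals. A naive pigeonhole on the ``type'' of slice subvariety would yield exponential-in-$k$ losses in codimension, destroying the linear dependence on $\log_p c^{-1}$. I expect the intended approach involves a carefully engineered regularization preserving multilinear structure simultaneously in all variables, in the spirit of dependent random choice and higher-order Bogolyubov-type arguments; this uniformization is the technical heart of the proof.
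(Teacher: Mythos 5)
This theorem is cited as an external result from~\cite{LukaOptimalWeak}; the present paper contains no proof of it and uses it as a black box, so there is no ``paper's own proof'' to compare against. Your proposal must therefore stand on its own, and on those terms it is incomplete in a way you yourself flag: the entire burden of the argument lands on the ``uniformization'' step, which you explicitly leave open. Slicing by $x_k$, applying the inductive hypothesis to each slice $V_{x_k}$, and hoping the resulting codimension-$O(\log_p c^{-1})$ subvarieties $V^*_{x_k}$ assemble into a single multilinear variety over $U_{[k]}$ is precisely where the work is. The inductive hypothesis is a bare existence statement with no canonicity: for different $x_k$ the defining forms of $V^*_{x_k}$ can be unrelated, and nothing forces them to depend linearly (or even measurably coherently) on $x_k$. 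Your own example $V = \{(x,y) : x_1 y_i = 0 \text{ for all } i\}$ already shows that the right low-codimension subvariety can be cut out by a form that is not a restriction of any defining form of $V$; more elaborate versions of this show the slicewise subvarieties need not glue at all, and a naive pigeonhole over ``types'' of slice subvariety introduces losses exponential in the ambient dimensions $\dim U_i$, not merely in $k$, which destroys the theorem (whose constant $K$ must be independent of $p$ and the dimensions).

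So the gap is genuine and not a routine technicality to defer. The strength of the conclusion --- codimension \emph{linear} in $\log_p c^{-1}$, with $K$ depending only on $k$ --- is exactly what makes slice-and-glue hard; one expects a working argument to operate globally on $V$ rather than slice by slice. Given that the companion Lemma~\ref{convvar}, cited from the same source, is phrased in terms of the directional convolution operators $\bigconv{i}$, and given the role of Lovett's correlation inequality (Lemma~\ref{lovettcorrelation}) elsewhere in this circle of ideas, a more plausible strategy is an iterative one that extracts multilinear constraints holding on a still-dense sub-object of $V$ and uses convolution-type smoothing to regain multilinear structure after each restriction. But this is speculation; your write-up in any case stops exactly where the technical content begins, and without the uniformization step the proposal does not constitute a proof.
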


An auxiliary result from that paper will also be used here. It says that very dense subsets of multilinear varieties of bounded codimension fill in the gaps after directional convolutions. Recall that the \textit{convolution in direction} $i$ is the operator that acts on functions $f: U_{[k]} \to \mathbb{R}$ as $\bigconv{i} f(x_{[k]}) = \exx_{y_i \in U_i} f(x_{[i-1]}, y_i + x_i, x_{[i+1, k]}) f(x_{[i-1]}, y_i, x_{[i+1, k]})$.

\begin{lemma}[Directional convolutions of varieties, Lemma 4 in~\cite{LukaOptimalWeak}]\label{convvar} Let $W \subseteq U_{[k]}$ be a multilinear variety of codimension $r$. Suppose that $B \subseteq W$ is a subset of size $|B| \leq 2^{-2k} p^{-kr} |U_{[k]}|$. Then
\[\bigconv{k}\bigconv{k-1} \dots \bigconv{1} \id_{W \setminus B}(x_{[k]}) > 0\]
holds for all $x_{[k]} \in W$.  
\end{lemma}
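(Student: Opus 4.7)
The plan is to prove the lemma by induction on $k$. The key structural observation is that the outermost convolution $\bigconv{k}$ reduces the problem to analyzing two slices of $W \setminus B$ along the last coordinate, to which the inductive hypothesis applies.

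For the base case $k=1$, the variety $W$ is a subspace of $U_1$ of codimension at most $r$, so $|W| \geq p^{-r}|U_1|$ by Lemma~\ref{bddcodimbound}. For $x_1 \in W$, the two events $y_1 \in W \setminus B$ and $x_1 + y_1 \in W \setminus B$ together reduce to $y_1 \in W \setminus (B \cup (-x_1 + B))$, a set of size at least $|W| - 2|B|$, which is positive under the hypothesis $|B| \leq 2^{-2}p^{-r}|U_1|$.

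For the inductive step, I would rewrite
\[\bigconv{k}\bigconv{k-1}\cdots\bigconv{1}\id_{W\setminus B}(x_{[k]}) = \ex_{y_k \in U_k} F(x_k + y_k)\, F(y_k),\]
where $F(a) := \bigconv{k-1}\cdots\bigconv{1}\id_{(W\setminus B)_a}(x_{[k-1]})$ and $S_a := \{u \in U_{[k-1]} : (u,a) \in S\}$ denotes the $a$-slice. Each slice $W_a$ is itself a multilinear variety in $U_{[k-1]}$ of codimension at most $r$, obtained by fixing $x_k = a$ in each defining form of $W$. It suffices to produce one $y_k$ making both factors strictly positive; by the inductive hypothesis this is guaranteed once (i) $(x_{[k-1]}, y_k),\,(x_{[k-1]}, x_k + y_k) \in W$ and (ii) $|B_{y_k}|,\,|B_{x_k+y_k}| \leq N := 2^{-2(k-1)}p^{-(k-1)r}|U_{[k-1]}|$.

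Condition (i) reduces, using $x \in W$, to a system of at most $r$ linear equations $\alpha_j(x_{I_j \setminus \{k\}}, y_k) = 0$ for those forms $\alpha_j$ defining $W$ with $k \in I_j$; these cut out a subspace $T_k \leq U_k$ with $|T_k| \geq p^{-r}|U_k|$. For condition (ii), since $\sum_a |B_a| = |B|$, the number of $a$ violating the threshold is at most $|B|/N \leq 2^{-2}p^{-r}|U_k|$, so the number of $y_k$ for which either $y_k$ or $x_k + y_k$ fails is at most $2^{-1}p^{-r}|U_k| \leq |T_k|/2$. Hence at least half of $T_k$ yields a valid $y_k$, closing the induction. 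The delicate point is that the factor $2^{-2k}$ in the hypothesis is calibrated precisely so that the union-bound loss $2^{-2}p^{-r}|U_k|$ at each step matches the codimension loss cutting down $T_k$, leaving exactly a factor-of-two safety margin; any weakening of the budget at any intermediate level would break this balance.
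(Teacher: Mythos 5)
The paper does not prove this lemma itself; it imports it verbatim as Lemma~4 of~\cite{LukaOptimalWeak}, so there is no in-paper proof to compare against. Judged on its own terms, your argument is correct and complete. The rewrite
\[
\bigconv{k}\bigconv{k-1}\cdots\bigconv{1}\id_{W\setminus B}(x_{[k]}) \;=\; \exx_{y_k\in U_k} F(x_k+y_k)\,F(y_k), \qquad F(a)=\bigconv{k-1}\cdots\bigconv{1}\id_{(W\setminus B)_a}(x_{[k-1]}),
\]
is legitimate because the operators $\bigconv{1},\dots,\bigconv{k-1}$ never touch the $k$-th coordinate, and the two conditions you isolate are exactly what the inductive hypothesis demands on the slice $W_{y_k}$ (codimension still $\le r$, with $B_{y_k}\subseteq W_{y_k}$ automatic). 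The bookkeeping checks out: the subspace $T_k$ of admissible shifts has $|T_k|\ge p^{-r}|U_k|$, it contains $x_k$ and hence is closed under $y_k\mapsto x_k+y_k$, and Markov gives at most $|B|/N \le 2^{-2}p^{-r}|U_k|$ slices exceeding the inductive threshold $N=2^{-2(k-1)}p^{-(k-1)r}|U_{[k-1]}|$, so the union bound leaves at least $|T_k|/2>0$ valid $y_k$. One stylistic remark: it is worth stating explicitly that each $F$ is pointwise nonnegative (being built from iterated products and averages of a nonnegative function), since that is what lets a single good $y_k$ force the full average to be strictly positive; you use this implicitly but never say it.
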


We also make use of a result of Lovett, which says that strictly-multilinear varieties are positively-correlated. A closely related formulation is that bias of multilinear forms is subadditive. Due to their equivalence, we combine these two statements into a single lemma.

\begin{lemma}[Lovett, Theorem 1.5 and Claim 1.6 in \cite{Lovett}]\label{lovettcorrelation}\phantom{a}\\
    \indent\textbf{(i)} Let $V$ and $W$ be strictly-multilinear varieties inside $U_{[k]}$. Then $|V \cap W| |U_{[k]}| \geq |V||W|$.\\
    \indent \textbf{(ii)} Let $\alpha, \beta : U_{[k]} \to \mathbb{F}_p$ be multilinear forms. Then $\on{bias}(\alpha + \beta) \geq \on{bias}(\alpha)\on{bias}(\beta)$.
\end{lemma}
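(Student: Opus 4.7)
The plan is to establish (ii) first by induction on $k$, and then to deduce (i) from (ii) by Fourier-expanding the indicators of the varieties.

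For (ii), the base case $k = 1$ is trivial since the bias of a linear form is either $0$ or $1$. For the inductive step, write $\alpha(x_{[k]}) = A(x_{[k-1]}) \cdot x_k$ and $\beta(x_{[k]}) = B(x_{[k-1]}) \cdot x_k$, so that averaging over $x_k$ yields
\[\on{bias}(\alpha + \beta) = \Pr_{x_{[k-1]}}\bigl[A(x_{[k-1]}) + B(x_{[k-1]}) = 0\bigr].\]
Fourier-expanding the indicator rewrites this as $\on{bias}(\alpha + \beta) = \ex_{\lambda \in U_k} \on{bias}(\alpha(\cdot, \lambda) + \beta(\cdot, \lambda))$, where the inner biases are now taken over $x_{[k-1]} \in U_{[k-1]}$. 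Applying the inductive hypothesis to each $\lambda$, introducing an independent copy $x'_{[k-1]}$, and swapping the order of expectations, we arrive at
\[\on{bias}(\alpha + \beta) \geq \Pr_{x, x'}\bigl[A(x) + B(x') = 0\bigr] = \sum_{v \in U_k} \Pr[A(x) = -v]\,\Pr[B(x') = v].\]
The $v = 0$ term on the right equals $\on{bias}(\alpha)\on{bias}(\beta)$, while all remaining terms are non-negative, closing the induction.

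For (i), let $V$ and $W$ be strictly-multilinear, defined by multilinear forms $\alpha_1, \dots, \alpha_r$ and $\beta_1, \dots, \beta_s$ on $U_{[k]}$. Fourier-expansion of the indicators gives
\[\frac{|V \cap W|}{|U_{[k]}|} = \ex_{\lambda \in \mathbb{F}_p^r,\, \mu \in \mathbb{F}_p^s}\on{bias}\Bigl(\sum_i \lambda_i \alpha_i + \sum_j \mu_j \beta_j\Bigr),\]
together with analogous formulas for $|V|/|U_{[k]}|$ and $|W|/|U_{[k]}|$. Strict-multilinearity ensures that every $\sum_i \lambda_i \alpha_i$ and $\sum_j \mu_j \beta_j$ is a genuine multilinear form on the common space $U_{[k]}$, so part (ii) applies inside the integrand; the expectations over $\lambda$ and $\mu$ then separate and deliver the desired inequality.

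The delicate point is the inductive step in (ii). Passing to the independent second copy $x'_{[k-1]}$ is a Cauchy--Schwarz-style symmetrization, but the key reason everything works is the non-negativity of the summands $\Pr[A(x) = -v]\,\Pr[B(x') = v]$: this positivity is what lets us drop every term except $v = 0$. Equivalently, the bias of a multilinear form equals the probability that an associated multilinear map vanishes, hence is always a non-negative real, and it is this structural feature that drives the whole argument.
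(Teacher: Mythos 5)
Your argument is correct, and it is worth noting that the paper itself does not prove this lemma: it is cited verbatim from Lovett's paper (Theorem 1.5 and Claim 1.6 of~\cite{Lovett}), so there is no in-paper proof to match against. What you have produced is a self-contained derivation that is essentially Lovett's Fourier-analytic argument in compressed form. Your inductive proof of (ii) is clean: the identity $\on{bias}(\alpha+\beta)=\exx_{\lambda\in U_k}\on{bias}(\alpha(\cdot,\lambda)+\beta(\cdot,\lambda))$ follows from writing $\id(v=0)=\exx_{\lambda}\chi(v\cdot\lambda)$, the inductive hypothesis applies slice by slice since each $\alpha(\cdot,\lambda)$ is a $(k-1)$-linear form, and un-Fourier-expanding in $\lambda$ yields exactly $\Pr_{x,x'}[A(x)+B(x')=0]=\sum_v \Pr[A(x)=-v]\Pr[B(x')=v]$, from which dropping to the $v=0$ term uses nothing beyond non-negativity of the summands. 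Your deduction of (i) from (ii) is likewise correct, and you rightly flag the essential role of strict multilinearity: without it, a linear combination $\sum_i\lambda_i\alpha_i$ of defining forms could be merely multiaffine in some coordinate (a form depending only on $x_{[k-1]}$ contributes a constant in $x_k$), and part (ii) would not be applicable.

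One small expository quibble: the step you describe as ``a Cauchy--Schwarz-style symmetrization'' involves no inequality at all. Introducing the independent copy $x'_{[k-1]}$ is simply rewriting the product of two averages $\bigl(\exx_x\chi(A(x)\cdot\lambda)\bigr)\bigl(\exx_{x'}\chi(B(x')\cdot\lambda)\bigr)$ as a single double average; the only inequality in the entire inductive step is the application of the inductive hypothesis and the subsequent discarding of non-negative terms. Calling it Cauchy--Schwarz may mislead a reader into looking for a squaring step that isn't there. Otherwise the proof is tight and the concluding remark correctly identifies the structural fact carrying the argument: the bias of a multilinear form is the probability that the associated $(k-1)$-linear map vanishes, hence non-negative.
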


Apart from these results on multilinear forms, we also need the following inequality.

\begin{lemma}\label{qineq}
 Let $x_1\le \ldots \le x_k$ and $y_1\le \ldots \le y_k$ be two sequences. Suppose that if $y_j=y_l$ then $x_j=x_l$. Then for any permutation $\sigma\in \on{Sym}_k$ such that there exists $j\in [k]$ for which $x_{\sigma(j)}\neq x_j$, we have that $$\sum_{j=1}^k(x_j+y_j)^2 > \sum_{j=1}^k(x_j+y_{\sigma(j)})^2$$   
\end{lemma}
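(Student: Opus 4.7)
The plan is to reduce the claim to the strict version of the rearrangement inequality and then pin down the equality case using the hypothesis $y_j=y_l\Rightarrow x_j=x_l$. First, I would expand both squares: since $\sigma$ is a permutation, $\sum_j y_{\sigma(j)}^2=\sum_j y_j^2$, and the $x_j^2$ contributions are identical on both sides, so the lemma reduces to proving
\[\sum_{j=1}^k x_j y_j > \sum_{j=1}^k x_j y_{\sigma(j)}\]
whenever $\sigma$ moves some index $j$ to one with a different $x$-value.

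To obtain the non-strict rearrangement inequality together with explicit equality conditions, I would apply Abel summation in $x$. Writing $x_j=x_1+\sum_{i=1}^{j-1}(x_{i+1}-x_i)$ and switching the order of summation gives
\[\sum_j x_j y_{\sigma(j)} = x_1\sum_j y_j + \sum_{i=1}^{k-1}(x_{i+1}-x_i)\sum_{l\in\sigma(\{i+1,\ldots,k\})} y_l.\]
Since $y_1\le\ldots\le y_k$ and $x_{i+1}-x_i\ge 0$, each inner sum of $k-i$ of the $y$-values is bounded by $y_{i+1}+\ldots+y_k$, and summing these upper bounds over $i$ recovers exactly $\sum_j x_j y_j$.

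For strictness I would leverage the hypothesis. At any index $i$ with $x_i<x_{i+1}$, the contrapositive of $y_j=y_l\Rightarrow x_j=x_l$ forces $y_i<y_{i+1}$; hence every $j\le i$ satisfies $y_j\le y_i<y_{i+1}$. Thus the only $(k-i)$-subset of $[k]$ whose multiset of $y$-values equals $\{y_{i+1},\ldots,y_k\}$ is $\{i+1,\ldots,k\}$ itself, so equality in the $i$-th Abel term forces $\sigma(\{i+1,\ldots,k\})=\{i+1,\ldots,k\}$. If equality were to hold globally, $\sigma$ would preserve every maximal $x$-constant block of indices set-wise, giving $x_{\sigma(j)}=x_j$ for all $j$. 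Taking the contrapositive yields the strict inequality claimed. The only subtle point is precisely this use of the hypothesis to align each $x$-boundary with a $y$-boundary, ensuring the uniqueness of the top-$(k-i)$ index set; the rest is bookkeeping.
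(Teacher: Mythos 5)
Your proof is correct, and it takes a genuinely different route from the paper's. Both arguments reduce to showing $\sum_j x_j y_j > \sum_j x_j y_{\sigma(j)}$ after cancelling squares. The paper invokes the non-strict rearrangement inequality as a black box, assumes equality for contradiction, and then performs a local exchange: it locates a cycle of $\sigma$ containing an index $j_1$ with $x_{\sigma(j_1)}\neq x_{j_1}$, walks along the cycle to find a profitable transposition, and shows that modifying $\sigma$ by this transposition strictly increases $\sum_j x_j y_{\sigma(j)}$, contradicting maximality. Your approach is a direct Abel summation: writing $\sum_j x_j y_{\sigma(j)} = x_1\sum_j y_j + \sum_{i=1}^{k-1}(x_{i+1}-x_i)\sum_{l\in\sigma(\{i+1,\ldots,k\})} y_l$ makes the rearrangement inequality itself transparent (each term is termwise dominated), and it gives immediate access to the equality case: if $x_i<x_{i+1}$, the hypothesis forces $y_i<y_{i+1}$, so the only size-$(k-i)$ subset achieving the maximal $y$-sum is $\{i+1,\ldots,k\}$, and equality across all Abel terms forces $\sigma$ to stabilize every maximal constant block of $x$, hence $x_{\sigma(j)}=x_j$ for all $j$. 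The upshot is that your argument proves the contrapositive directly and yields the full equality structure, whereas the paper's argument is a shorter ad hoc swap producing a contradiction; both are elementary and correct.
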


\begin{proof}
    Note that the square terms on both sides cancel out so we just have to prove that $\sum_{j=1}^k x_jy_j>\sum_{j=1}^k x_jy_{\sigma(j)}$. Note that \begin{align}\label{rearrangesum}\sum_{j=1}^k x_jy_j\ge \sum_{j=1}^k x_jy_{\sigma(j)}
    \end{align}by the rearrangement inequality so suppose that equality holds in (\ref{rearrangesum}). Let $j_1$ be such that $x_{\sigma(j_1)}\neq x_{j_1}$. Let $(j_1j_2\ldots j_l)$ be a cycle in $\sigma$ and let $j_{l+1}=j_1$. We may assume without loss of generality that $x_{\sigma(j_1)}> x_{j_1}$. Let $a\le l$ be the smallest positive integer such that $x_{j_{a+1}}\le x_{j_1}$. Now $x_{j_a} > x_{j_1}$ but $x_{j_{a+1}}<x_{j_2}$ so $y_{j_{a+1}}<y_{j_2}$. Now let $\sigma'$ be the permutation in $\on{Sym}_k$ such that $\sigma'(j)=\sigma(j)$ for all $j\neq j_1,j_a$ and $\sigma'(j_a)=\sigma(j_1),\sigma'(j_1)=\sigma(j_a)$, We obtain that $$\sum_{j = 1}^k x_jy_{\sigma'(j)}-\sum_{j = 1}^k x_jy_{\sigma(j)}=(x_{j_a}-x_{j_1})(y_{j_2}-y_{j_{a+1}})>0.$$ But now equality cannot hold in \eqref{rearrangesum} which is a contradiction. This proves the lemma.
\end{proof}

\section{Reduction to algebraic problem}

In this section, we carry out the first step of the proof. Recall that $\mathcal{L}_Q$ denotes the unique symmetric $k$-linear map such that $Q_{\on{hom}}(f) = \mathcal{L}_Q(f,f,\ldots ,f)$ where $Q_{\on{hom}}$ is the homogeneous degree-$k$ part of $Q$. In this step, we rely on Proposition 15 in~\cite{BL}, which we now recall. In the paper~\cite{BL}, the map $\Phi$ appearing in the proposition is assumed to be a phase of a quadratic. However, the proof does not use this fact, and works for arbitrary functions, so we state it in full generality.

\begin{proposition}\label{bienvenulestep1}
    Let $c > 0$ and let $\Phi : G_n \to \mathbb{D}$ be a function. Suppose that $\Big|\sum_{f\in G_n}\mu(f)\Phi(f)\Big| \geq c p^n$. Then either there exists $m \leq n/9$ so that
    \[\exx_{a \in A_m} \Big|\exx_{x \in G_{n- m}} \Phi(ax)\Big|^2 \geq \frac{c^2}{16n^5},\]
    or there is a $m \in [n/18, 17n/18]$ such that
    \[\exx_{x, x' \in G_{n - m}} \exx_{a,a' \in A_m} \Phi(ax)\overline{\Phi(a'x)}\,\overline{\Phi(ax')}\Phi(a'x') \geq \frac{c^4}{256n^{10}}.\]
\end{proposition}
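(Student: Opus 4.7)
The plan is to invoke a function field analog of Vaughan's identity for the M\"obius function, which decomposes $\sum_f \mu(f)\Phi(f)$ into two sums of Dirichlet-convolution type. The \emph{Type I} contributions have the form $\sum_{f = de} \alpha_d \, \id(\deg d \leq U)$ where one factor is constrained to have small degree, and the \emph{Type II} contributions have the form $\sum_{f = de} \alpha_d \beta_e$ where both factors have degree in an intermediate window. The coefficients $\alpha_d, \beta_e$ are bounded by the divisor function, for which one has the standard second-moment estimate $\sum_{f \in A_m} \tau(f)^2 \leq O(m^{O(1)}) p^m$ in $\mathbb{F}_p[t]$. Since $\Phi$ enters only through its boundedness, no structural assumption on $\Phi$ is used. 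With parameter choices roughly $U \asymp p^{n/9}$ and $V \asymp p^{n/18}$, the degree windows $m \leq n/9$ and $m \in [n/18, 17n/18]$ arise naturally from the identity.

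If the total sum exceeds $c p^n$, then after dyadic pigeonholing over the $O(n)$ possible values of $m = \deg d$, one of the two types of contribution carries a $\gtrsim c p^n / n^{O(1)}$ share of the main term. In the \textbf{Type I} case one applies Cauchy--Schwarz in the variable $a \in A_m$, absorbs the divisor-function coefficients via the $L^2$ bound above, and obtains
\[\exx_{a \in A_m} \Big|\exx_{x \in G_{n-m}} \Phi(ax)\Big|^2 \gtrsim \frac{c^2}{n^{O(1)}},\]
which, with careful tracking of constants, yields the first alternative.

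In the \textbf{Type II} case the sum has the bilinear shape $\sum_{a \in A_m}\sum_{x \in G_{n-m}} \alpha_a \beta_x \Phi(ax)$. Two Cauchy--Schwarz applications convert this to the fourfold expression: first in $a$ (doubling $x$ to $x, x'$ and absorbing the $\alpha$ coefficients), then in $(x, x')$ (doubling $a$ to $a, a'$ and absorbing the $\beta$ coefficients). Since each Cauchy--Schwarz step squares the initial loss, the resulting bound is
\[\exx_{x, x' \in G_{n-m}}\exx_{a, a' \in A_m}\Phi(ax)\overline{\Phi(a'x)}\,\overline{\Phi(ax')}\Phi(a'x') \gtrsim \frac{c^4}{n^{O(1)}},\]
giving the second alternative. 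The passage $16 n^5 \mapsto 256 n^{10} = (16 n^5)^2$ between the two stated bounds is precisely the fingerprint of this squaring.

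The main obstacle is the bookkeeping: tracking the divisor-function moments and pigeonholing losses so that the final constants and powers of $n$ match the statement, and verifying that the Vaughan identity with the chosen parameters produces exactly the degree windows $[0, n/9]$ and $[n/18, 17n/18]$. A related subtlety is that the Type II decomposition must produce both factors in a roughly symmetric medium range, since otherwise one of the two Cauchy--Schwarz applications would be wasteful and would yield an asymmetric fourfold correlation rather than the one in the statement.
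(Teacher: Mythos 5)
The paper does not prove this proposition itself: it cites it as Proposition~15 of Bienvenu and L\^{e}~\cite{BL}, only observing that the hypothesis there (that $\Phi$ is a quadratic phase) is never used, so the statement holds for arbitrary $\Phi : G_n \to \mathbb{D}$. Your sketch (Vaughan's identity over $\mathbb{F}_p[t]$, split into Type~I and Type~II sums, divisor-moment bounds, pigeonholing over $\deg d$, one Cauchy--Schwarz for Type~I and two nested Cauchy--Schwarz applications for Type~II with the $16n^5 \mapsto (16n^5)^2$ squaring) is precisely the argument carried out in that cited source, so your approach matches. Be aware, though, that you have explicitly deferred the bookkeeping: the statement's exact windows $[0,n/9]$ and $[n/18,17n/18]$ and the exact constants $16n^5$, $256n^{10}$ require choosing the Vaughan cutoffs $U,V$ and tracking the polynomial-in-$n$ losses carefully, and a complete proof would need to carry this out rather than gesture at it.
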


The next proposition encapsulates the first step of the proof and follows easily from Proposition~\ref{bienvenulestep1} and the Gowers-Cauchy-Schwarz inequality.

\begin{proposition}\label{passtomultilemma}
    There exists a constant $C_1 \geq 1$ depending only on $k$ such that the following holds. Let $0 \leq m_0 \leq n/18$ be an integer. Suppose that  $\Big|\sum_{f\in G_n}\mu(f)\chi(Q(f))\Big| \geq c p^n$. Then
    \[\on{bias} \mathcal{L}_Q \geq p^{-km_0}\Big(\frac{c}{2n}\Big)^{C_1}\]
    or there is some $m \in [m_0, 17n/18]$ such that there are at least $(c/2n)^{C_1} p^{k m}$ $k$-tuples $a_{[k]}\in G_{m}^k$ such that the multilinear map 
    \[x_{[k]} \rightarrow \sum_{\pi \in \on{Sym}_k}\mathcal{L}_Q(a_1x_{\pi(1)},\ldots, a_k x_{\pi(k)})\]
    defined on $G_{n-m}^k$ has bias at least $(c/2n)^{C_1}$. 
\end{proposition}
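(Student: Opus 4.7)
The plan is to apply Proposition~\ref{bienvenulestep1} with $\Phi = \chi \circ Q$, obtaining either Case~(I) some $m \leq n/9$ with $\exx_{a \in A_m}|\exx_{x \in G_{n-m}}\chi(Q(ax))|^2 \geq c^2/(16n^5)$, or Case~(II) some $m \in [n/18, 17n/18]$ with the four-fold box correlation $\exx_{a, a', x, x'}\chi(Q(ax)-Q(a'x)-Q(ax')+Q(a'x')) \geq c^4/(256n^{10})$. The central idea is to apply the Gowers--Cauchy--Schwarz inequality for polynomial phases twice---first in $x$, then in $a$---so that the bias of $\Phi$ on the multiplicative set $\{ax\}$ is converted into the bias of the symmetrised form. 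The algebraic input powering the second step is the identity
\[\Delta_{b_1}\cdots\Delta_{b_k}\bigl[a \mapsto \mathcal{L}_Q(a h_1, \ldots, a h_k)\bigr] = \sum_{\pi\in\on{Sym}_k}\mathcal{L}_Q(b_1 h_{\pi(1)}, \ldots, b_k h_{\pi(k)}) =: S(b_{[k]}, h_{[k]}),\]
obtained by expanding each factor $(a+b_i)h_j$ multilinearly and observing that, after $k$ iterated derivatives, only those summands using each $b_i$ exactly once and no $a$ at all survive; this is the origin of the symmetric group action.

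In Case~(I), Jensen's inequality upgrades the hypothesis to $\exx_a |\exx_x \chi(Q(ax))|^{2^k} \geq (c^2/(16n^5))^{2^{k-1}} =: \delta$, and Gowers--Cauchy--Schwarz in $x$, applied to the degree-$k$ polynomial $x \mapsto Q(ax)$, yields $\exx_{a \in A_m}\on{bias}_{h_{[k]} \in G_{n-m}^k}\bigl(\mathcal{L}_Q(a h_{[k]})\bigr) \geq \delta$. If $m \leq m_0$, Markov produces an $a \in A_m$ with $\on{bias}_{h_{[k]}}(\mathcal{L}_Q(a h_{[k]})) \geq \delta/2$, so that Lemma~\ref{formsRestrictionLemma2} applied to the codimension-$m$ subspaces $a G_{n-m} \leq G_n$ gives $\on{bias}\mathcal{L}_Q \geq p^{-km}\delta/2 \geq p^{-km_0}(c/2n)^{C_1}$, the first alternative. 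If instead $m > m_0$, a Cauchy--Schwarz on the outer expectation $\exx_a$ promotes this to $\exx_{h_{[k]}}|\exx_a \chi(k!\mathcal{L}_Q(a h_{[k]}))|^2 \geq \delta^2$, which coincides with the conclusion of Case~(II). In Case~(II), the box expression rewrites as $\exx_{a,a'}|\exx_x\chi(Q(ax)-Q(a'x))|^2 \geq c^4/(256n^{10})$, and Gowers--Cauchy--Schwarz in $x$ combined with Jensen likewise gives $\exx_{h_{[k]}}|\exx_a \chi(k!\mathcal{L}_Q(a h_{[k]}))|^2 \geq (c/(2n))^{O_k(1)}$.

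For the remaining, unified case I then apply Gowers--Cauchy--Schwarz a second time, in the coset variable $a \in A_m$. Writing $a = t^m + g$ with $g \in G_m$, the polynomial $g \mapsto \mathcal{L}_Q((t^m+g) h_1, \ldots, (t^m+g) h_k)$ has degree $k$ in $g$ with top homogeneous part $\mathcal{L}_Q(g h_1, \ldots, g h_k)$, and by the displayed identity its $k$-th iterated derivative equals $S(b_{[k]}, h_{[k]})$. Combining with Jensen and averaging over $h_{[k]}$ yields $\exx_{b_{[k]} \in G_m^k, h_{[k]} \in G_{n-m}^k}\chi(k! S(b_{[k]}, h_{[k]})) \geq (c/(2n))^{C'}$ for some $C' = O_k(1)$. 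Since $h_{[k]} \mapsto S(b_{[k]}, h_{[k]})$ is multilinear for each fixed $b_{[k]}$, its bias is nonnegative, so Markov produces at least $(c/(2n))^{C_1} p^{km}$ tuples $b_{[k]} \in G_m^k$ for which the symmetric form $x_{[k]} \mapsto S(b_{[k]}, x_{[k]})$ has bias at least $(c/(2n))^{C_1}$, provided $C_1$ is taken large enough. The main technical nuisance to manage is the bookkeeping of the various $k!$ factors and ensuring that the coset version of Gowers--Cauchy--Schwarz in the $a$-direction produces the same top-degree symmetric form as it would on a full subspace; both points are resolved by the hypothesis $p > k$, which makes $k!$ invertible in $\mathbb{F}_p$ and guarantees that scaling a multilinear form by $k!$ preserves its bias.
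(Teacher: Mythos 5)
Your proof is correct, and it reaches the same intermediate biased symmetrised form $\sum_{\pi \in \on{Sym}_k}\mathcal{L}_Q(a_1 x_{\pi(1)},\dots,a_k x_{\pi(k)})$, but it gets there by a genuinely different route than the paper. After invoking Proposition~\ref{bienvenulestep1}, the paper first symmetrises Case~(I) into the box form of Case~(II) by an outer Cauchy--Schwarz, handles the $m < m_0$ regime by iterating Cauchy--Schwarz on the restriction of $Q$ to the codimension-$m$ subspace $aG_{n-m}$, and for $m \geq m_0$ makes the change of variables $a' = a_0 + a_1 + \dots + a_k$, $x' = x_0 + x_1 + \dots + x_k$ and applies Gowers--Cauchy--Schwarz \emph{once}, simultaneously in all $2k$ auxiliary variables, extracting exactly the terms $\mathcal{L}_Q(a_{i_1}x_{j_1},\dots,a_{i_k}x_{j_k})$ with $(i_{[k]})$ and $(j_{[k]})$ both permutations; the symmetric group then appears as the indexing set of the surviving multilinear monomials. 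You instead apply Gowers--Cauchy--Schwarz \emph{twice}, sequentially: first in $x$ (converting the box correlation into a statement about $\ex_{h_{[k]}}\big|\ex_{a}\chi(k!\mathcal{L}_Q(a h_{[k]}))\big|^2$), then in the coset variable $a = t^m + g$ over $G_m$, and the symmetric group appears via the clean algebraic identity $\Delta_{b_1}\cdots\Delta_{b_k}\,\mathcal{L}_Q(a h_1,\dots,a h_k) = \sum_{\pi}\mathcal{L}_Q(b_1 h_{\pi(1)},\dots,b_k h_{\pi(k)})$, together with the observation that the top-degree-in-$g$ part of $\mathcal{L}_Q((t^m+g)h_{[k]})$ is $\mathcal{L}_Q(g h_{[k]})$. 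Both arguments exploit $p > k$ (invertibility of $k!$, and character-independence of the bias of a multilinear form) in the same place; your two-stage decomposition is perhaps more transparent about \emph{why} $\on{Sym}_k$ enters, while the paper's simultaneous $2k$-variable application with the additive change of variables is more compact. One small stylistic slip: the quantity $\ex_{h_{[k]}}|\ex_a \chi(k!\mathcal{L}_Q(a h_{[k]}))|^2 \geq \delta^2$ that you obtain from Case~(I) with $m > m_0$ is not literally "the conclusion of Case~(II)" but rather what Case~(II) becomes after your first Gowers--Cauchy--Schwarz in $x$; the unification is still valid, just phrase it as such.
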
 

\begin{proof}
    Apply Proposition~\ref{bienvenulestep1} with $\Phi(f) = \chi(Q(f))$. Let us immediately observe that the first case of the proposition implies a conclusion similar to the second one. Namely, if for some $m \leq n/9$ we have
    \begin{align*}\frac{c^2}{16n^5}\leq &\exx_{a \in A_m} \Big|\exx_{x \in G_{n- m }} \chi(Q(ax))\Big|^2 = \exx_{a \in A_m} \exx_{x, x' \in G_{n- m }} \chi(Q(ax) - Q(ax'))\\
    \leq & \exx_{x, x' \in G_{n- m }} \Big|\exx_{a \in A_m}  \chi(Q(ax) - Q(ax'))\Big|\\
    \leq & \sqrt{\exx_{x, x' \in G_{n- m }} \Big|\exx_{a \in A_m}  \chi(Q(ax) - Q(ax'))\Big|^2}\\
    =&\sqrt{\exx_{x, x' \in G_{n- m }}\exx_{a,a' \in A_m}  \chi(Q(ax) - Q(ax') - Q(a'x) + Q(a'x'))}, \end{align*}
    where we used the Cauchy-Schwarz inequality in the penultimate line.\\

    Thus, we only need to discuss the possibility that $m  < m_0$ or to proceed with the second case.\\

    \noindent\textbf{Small $m$ case.} The outcome of Proposition in particular gives some $a \in A_m$ such that $\Big|\ex_{x \in G_{n- m}} \chi(Q(ax))\Big| \geq  \frac{c^2}{16n^5}$. As $a$ is monic, we have that $U = a \cdot G_{n-m}$ is a subspace of $G_n$ of codimension $m$. Thus $\Big|\ex_{x \in U} \chi(Q(x))\Big| \geq  \frac{c^2}{16n^5}$. Squaring, expanding and making a change of variables, we obtain 
    \[\exx_{x,b \in U} \chi(\Delta_b Q(x)) \geq \Big(\frac{c^2}{16n^5}\Big)^2.\]

    Repeating Cacuhy-Schwarz inequalities with change of variables, we obtain
    \[\on{bias} \mathcal{L}_Q|_{U \times \dots \times U} = \exx_{x, b_1, \dots, b_k \in U}  \chi(\Delta_{b_1, \dots, b_k} Q(x)) \geq \Big(\frac{c^2}{16n^5}\Big)^{2^k}.\]

    By Lemma~\ref{formsRestrictionLemma2}, we have 
    \[\on{bias} \mathcal{L}_Q|_{U \times \dots \times U} \leq p^{km} \on{bias} \mathcal{L}_Q\]
    completing this part of the proof.\\

    \noindent\textbf{Second outcome.} Hence, we may assume that we have $m_0 \leq m \leq 17n/18$ such that  
    \[\exx_{x, x' \in G_{n- m }}\exx_{a,a' \in A_m}  \chi(Q(ax) - Q(ax') - Q(a'x) + Q(a'x')) \geq \frac{c^4}{256n^{10}}.\]

    We introduce variables $a_1, a_2, \dots, a_k$ ranging over $G_m$ and $x_1, \dots, x_k$ ranging over $G_{n-m}$. Let us rename $a$ and $x$ to $a_0$ and $x_0$, and make a change of variables and replace $a'$ by $a_0 + a_1 + \ldots + a_k$ and $x' = x_0 + x_1 + \ldots + x_k$. Hence, the expression above becomes

    \begin{align} &\exx_{\ssk{x_0, x_1, \dots, x_k \in G_{n- m}\\a_0,a_1, \dots, a_k \in A_m}} \chi\Big(Q\Big((a_0 + a_1 + \dots + a_k)(x_0 + x_1 + \dots + x_k)\Big) - Q\Big(a_0(x_0 + x_1 + \dots + x_k)\Big) \label{step1longEQN}\\
    &\hspace{5cm}- Q((a_0 + a_1 + \dots + a_k)x_0) + Q(a_0x_0)\Big) \geq \frac{c^4}{256n^{10}}.\nonumber\end{align}

    Recall that $Q(f) = Q_{\on{hom}}(f) + Q_{\on{low}}(f)$, where $Q_{\on{hom}}$ is the homogeneous part of degree $k$ and $Q_{\on{low}}$ is the part of $Q$ of degree at most $k-1$, and $Q_{\on{hom}}(f) = \mathcal{L}_Q(f,f,\dots,f)$. Hence
    \[Q_{\on{hom}}\Big((a_0 + a_1 + \dots + a_k)(x_0 + x_1 + \dots + x_k)\Big) = \mathcal{L}_Q\Big(\sum_{i,j = 0}^k a_ix_j, \dots, \sum_{i,j = 0}^k a_ix_j\Big) = \sum_{i_1, \dots, i_k, j_1, \dots, j_k = 0}^k  \mathcal{L}_Q(a_{i_1}x_{j_1}, \dots, a_{i_k}x_{j_k}).\]

    Hence, in expression~\eqref{step1longEQN}, the only terms that involve all variables $a_1, \dots, a_k, x_1, \dots, x_k$ are precisely $\mathcal{L}_Q(a_{i_1}x_{j_1}, \dots, a_{i_k}x_{j_k})$, when both $(i_1, \dots, i_k)$ and $(j_1, \dots, j_k)$ are permutations of $[k]$. Note also that for such indices $(a_{[k]}, x_{[k]}) \mapsto \mathcal{L}_Q(a_{i_1}x_{j_1}, \dots, a_{i_k}x_{j_k})$ is a multilinear form in $2k$ variables. Applying the Gowers-Cauchy-Schwarz inequality for these variables implies that 
    \[\exx_{a_1, \dots, a_k \in G_m, x_1, \dots, x_k \in G_{n - m}} \chi\Big(\sum_{\pi, \sigma \in \on{Sym}_k} \mathcal{L}_Q(a_{\pi(1)} x_{\sigma(1)}, \dots, a_{\pi(k)} x_{\sigma(k)})\Big) \geq \Big( \frac{c^4}{256n^{10}}\Big)^{2^{2k}}.\]
    Since $\mathcal{L}_Q$ is symmetric and $\on{gcd}(k!, p) = 1$, we may simplify the expression above and have $\pi = \on{id}$ only. Since for each fixed $a_1, \dots, a_k$, the expression 
    \[\exx_{x_1, \dots, x_k \in G_{n - m}} \chi\Big(\sum_{\pi \in \on{Sym}_k}\mathcal{L}_Q(a_1x_{\pi(1)},\ldots, a_k x_{\pi(k)})\Big)\]
    is the bias of a multilinear form and hence a non-negative real that is at most 1, the claim follows by averaging.
\end{proof}

\vspace{\baselineskip}

\section{Finding a variety of biased forms}

In this section, we prove a general result on biased sub-forms of a given multilinear form, stated as Theorem~\ref{varietyofbiasedmaps} below, which we then apply in the study of the M\"obius function, to complete the second step of our outline.\\
\indent Let $U_1, \dots, U_k, V_1, \dots, V_\ell$ be finite-dimensional vector spaces over $\mathbb{F}_p$ and let $\alpha : U_1 \tdt U_k \times V_1 \tdt V_\ell  \to \mathbb{F}_p$ be a multilinear form. For $x_{[k]} \in U_{[k]}$, we write $\alpha_{x_{[k]}} : V_{[\ell]} \to \mathbb{F}_p$ for the multilinear form $\alpha_{x_{[k]}}(y_{[\ell]}) = \alpha(x_{[k]}, y_{[\ell]})$.

\begin{theorem}\label{varietyofbiasedmaps}
      Let $K = K(k)$ be the constant from Theorem~\ref{weakinverse}.  Let $\alpha : U_{[k]} \times V_{[\ell]} \to \mathbb{F}_p$ be a multilinear form. For any $\delta > 0$ define $S_\delta = \{x_{[k]} \in U_{[k]} : \on{bias}(\alpha_{x_{[k]}}) \geq \delta\}$.\\
        \indent Suppose that  for some $c > 0$ we have $|S_c| \geq c|U_{[k]}|$. Then $S_{\tilde{c}}$ contains a multilinear variety of codimension at most $K(2 \log_p c^{-1} + 2)$, where $\tilde{c} = 2^{-2^{k+2}k}p^{-2^{k+1}k K} c^{2^{k+2}kK}$.
\end{theorem}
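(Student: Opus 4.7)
The plan is to produce a low-codimension multilinear variety $V^\star \subseteq U_{[k]}$ every point of which lies in $S_{\tilde c}$. To begin, I would write $\alpha(x_{[k]}, y_{[\ell]}) = A(x_{[k]}, y_{[\ell-1]}) \cdot y_\ell$ for the associated multilinear map $A : U_{[k]} \times V_{[\ell-1]} \to V_\ell$, so that $\on{bias}(\alpha_{x_{[k]}}) = \Pr_{y_{[\ell-1]}}[A(x_{[k]}, y_{[\ell-1]}) = 0]$. For each $y \in V_{[\ell-1]}$ the slice $T_y := \{x_{[k]} : A(x_{[k]}, y) = 0\}$ is a multilinear variety in $U_{[k]}$ of codimension at most $\dim V_\ell$, and the hypothesis $|S_c| \geq c|U_{[k]}|$ gives $\exx_{x_{[k]}} \on{bias}(\alpha_{x_{[k]}}) \geq c^2$, i.e.\ $\exx_y |T_y|/|U_{[k]}| \geq c^2$.

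Next, I would fix an auxiliary threshold $\tilde c_0 > 0$ (to be chosen later), use the companion bound $\exx_y |T_y \cap S_{\tilde c_0}^c|/|U_{[k]}| \leq \tilde c_0$, and combine the two averages via a Markov-style argument (maximizing $|T_y| - L|T_y \cap S_{\tilde c_0}^c|$ over $y$ with $L = c^2/(2\tilde c_0)$) to produce some $y^\star \in V_{[\ell-1]}$ satisfying both $|T_{y^\star}|/|U_{[k]}| \geq c^2/4$ and $|T_{y^\star} \cap S_{\tilde c_0}^c|/|U_{[k]}| \leq 2\tilde c_0/c^2$. Applying Theorem~\ref{weakinverse} to $T_{y^\star}$---a multilinear variety in $U_{[k]}$ of density at least $c^2/4$---then yields a multilinear subvariety $V^\star \subseteq T_{y^\star}$ of codimension $r \leq K(2\log_p c^{-1} + 2)$, which is exactly the target codimension bound.

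To verify that every $x \in V^\star$ actually lies in $S_{\tilde c}$, I would invoke the directional convolutions lemma. The bad set $B^\star := V^\star \setminus S_{\tilde c_0}$ is contained in $T_{y^\star} \cap S_{\tilde c_0}^c$, so choosing $\tilde c_0$ of size roughly $2^{-2k} p^{-2kK} c^{2kK + 2}$ makes $|B^\star| \leq 2^{-2k} p^{-kr} |U_{[k]}|$; Lemma~\ref{convvar} then gives $\bigconv{k} \cdots \bigconv{1} \id_{V^\star \setminus B^\star}(x) > 0$ for every $x \in V^\star$. Unpacking this positivity produces $2^k$ points $p^\epsilon \in V^\star \setminus B^\star \subseteq S_{\tilde c_0}$ indexed by $\epsilon \in \{0,1\}^k$ arranged in a (tree-structured) cube configuration around $x$, and by multilinearity of $\alpha$ in its $U$-variables, one gets the alternating-sum identity
\[\alpha_{x_{[k]}} = \sum_{\epsilon \in \{0,1\}^k} (-1)^{k - |\epsilon|}\, \alpha_{p^\epsilon}\]
as multilinear forms on $V_{[\ell]}$ (provable by induction on $k$: the innermost convolution $\bigconv{1}$ pairs points that differ only in coordinate $1$ by exactly $x_1$, producing $\alpha(x_1,\cdot)$; iterating collapses to $\alpha(x_{[k]},\cdot)$). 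Iterating the submultiplicativity of bias (Lemma~\ref{lovettcorrelation}(ii)) over the sum of $2^k$ forms then yields
\[\on{bias}(\alpha_{x_{[k]}}) \geq \prod_{\epsilon \in \{0,1\}^k} \on{bias}(\alpha_{p^\epsilon}) \geq \tilde c_0^{2^k},\]
so $V^\star \subseteq S_{\tilde c_0^{2^k}}$, matching the stated $\tilde c$ (up to the constants).

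The main obstacle I anticipate is verifying the alternating-sum identity for the specific configuration produced by iterated directional convolutions: because each convolution $\bigconv{i}$ introduces fresh shifts $y_i^{\epsilon_{[i+1,k]}}$ depending on the later coordinates, the $2^k$ points $p^\epsilon$ do not form a standard affine cube but rather a branching tree of shifts. The identity nevertheless persists by induction because the innermost pairing in coordinate $1$ collapses to $\alpha(x_1,\cdot)$ independently of the branching in later coordinates, and the smaller tree on the remaining coordinates has the same shape. A secondary difficulty is the simultaneous tuning of $\tilde c_0$: it must be small enough that Lemma~\ref{convvar} applies to $B^\star$, but not so small that $\tilde c = \tilde c_0^{2^k}$ falls below the bound in the statement; the balance is dictated by $p^{-kr}$, forcing the specific exponents of $c$ and $p$ that appear in $\tilde c$.
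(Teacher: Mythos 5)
Your proof is correct and follows essentially the same strategy as the paper — the hypothesis is rephrased as density of the slice set $T$, a dependent random choice argument locates a good fibre $T_{y^\star}$, Theorem~\ref{weakinverse} supplies the bounded-codimension subvariety, Lemma~\ref{convvar} gives positivity of the iterated directional convolutions, and Lovett's result propagates the bias bound to every point of the subvariety; your alternating-sum identity plus Lemma~\ref{lovettcorrelation}(ii) is the dual phrasing of the paper's iterated two-point Claim using Lemma~\ref{lovettcorrelation}(i), and the paper itself records that (i) and (ii) are equivalent. One small slip worth fixing: your Markov maximization with $L = c^2/(2\tilde c_0)$ in fact gives $|T_{y^\star}| \geq \tfrac{c^2}{2}|U_{[k]}|$ (not merely $\tfrac{c^2}{4}|U_{[k]}|$), and you need the stronger bound, since density $c^2/4$ would, when $p=2$, push the codimension from Theorem~\ref{weakinverse} to $K(2\log_p c^{-1}+3)$, exceeding the stated $K(2\log_p c^{-1}+2)$.
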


\vspace{\baselineskip}

\noindent\textbf{Remark.} It is natural to compare this to various fibers theorems for mulitlinear varieties, for example Theorem 3.8 in~\cite{KazhZieFibres}, Theorem 38 in~\cite{FMulti} and Theorem 8 in~\cite{LukaPrank}. However, all those previous results rely on the partition vs. analytic rank problem and cannot give the strong bounds that we need.\\

In the proof, we use the following interpretation of the bias.\\

\begin{claim}\label{biasinterpretation}
    Let $\psi : W_{[k]} \to \mathbb{F}_p$ be a multilinear form. Then $\on{bias} \psi \geq c$ is equivalent to there being at least $c |W_1||W_2| \dots |W_{k-1}|$ choices of $(k - 1)$-tuples $x_{[k-1]} \in W_{[k-1]}$ such that $\psi(x_{[k-1]}, y) = 0$ for all $y \in W_k$.
\end{claim}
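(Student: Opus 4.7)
The plan is to prove the claim by a direct two-line computation, using the orthogonality of additive characters on the last variable.

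First I would fix a dot product on $W_k$ and write $\psi(x_{[k-1]}, y) = A(x_{[k-1]}) \cdot y$ for a unique multilinear map $A : W_{[k-1]} \to W_k$ (this is the same device used in the proof of Lemma~\ref{formsRestrictionLemma2}). The condition that $\psi(x_{[k-1]}, y) = 0$ for every $y \in W_k$ is then just $A(x_{[k-1]}) = 0$.

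Next I would expand the bias by averaging over $y$ first:
\begin{align*}
\on{bias}\psi &= \exx_{x_{[k-1]} \in W_{[k-1]}} \exx_{y \in W_k} \chi\bigl(A(x_{[k-1]}) \cdot y\bigr) \\
&= \exx_{x_{[k-1]} \in W_{[k-1]}} \id\bigl(A(x_{[k-1]}) = 0\bigr) \\
&= \frac{\bigl|\{x_{[k-1]} \in W_{[k-1]} : A(x_{[k-1]}) = 0\}\bigr|}{|W_{[k-1]}|},
\end{align*}
where the second equality is orthogonality of characters on the subspace $W_k$. Rearranging, the inequality $\on{bias}\psi \geq c$ is equivalent to
\[\bigl|\{x_{[k-1]} : \psi(x_{[k-1]}, y) = 0 \text{ for all } y \in W_k\}\bigr| \geq c|W_1|\cdots|W_{k-1}|,\]
which is exactly the claim.

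There is no real obstacle here; the statement is essentially a reformulation of the definition of bias once one exploits linearity in the last coordinate. The only thing to be slightly careful about is that the identity $\exx_{y} \chi(v \cdot y) = \id(v = 0)$ requires a non-degenerate pairing, but this is guaranteed by the choice of dot product on $W_k$.
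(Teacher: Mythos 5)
Your argument is correct and is essentially identical to the paper's proof: both introduce a multilinear map $A$ (the paper calls it $\Psi$) with $\psi(x_{[k-1]},y)=A(x_{[k-1]})\cdot y$, average over $y$ first using orthogonality of characters to turn the inner average into $\id(A(x_{[k-1]})=0)$, and then read off the claim. No differences worth noting.
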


\begin{proof}
    By the definition of bias, we have 
    
    \[\on{bias} \psi =\exx_{x_1 \in W_1, \dots, x_{k-1} \in W_{k-1}, y \in W_k} \chi\Big(\psi(x_1, \dots, x_{k-1}, y)\Big) \geq c.\]
    
    We know that there exists a multilinear map $\Psi : W_{[k-1]} \to W_k$, such that $\psi(x_{[k-1]},y) = \Psi(x_{[k-1]}) \cdot y$ holds for all $x_1 \in W_1, \dots, x_{k-1} \in W_{k-1}, y \in W_k$. Thus, $\on{bias} \psi \geq c$ is equivalent to
    \[c \leq \exx_{x_1 \in W_1, \dots, x_{k-1} \in W_{k-1}} \Big(\exx_{y \in W_k} \omega^{\Psi(x_{[k-1]}) \cdot y}\Big).\]
    But, the inner average is $\id(\Psi(x_{[k-1]}) = 0)$. Hence, this is equivalent to there being at least $c|W_{[k-1]}|$ choices of $x_{[k-1]} \in W_{[k-1]}$ such that $\Psi(x_{[k-1]}) = 0$, and this is equivalent to having $c|W_{[k-1]}|$ choices of $(k-1)$-tuples $x_{[k-1]} \in W_{[k-1]}$ such that $\psi(x_{[k-1]}, y) = 0$ for all $y \in W_k$.
\end{proof}

\begin{proof}[Proof of Theorem~\ref{varietyofbiasedmaps}]
Let $a_{[k]} \in S_c$. Then
\[\on{bias}_{b_1 \in V_1, b_2 \in V_2,\dots, b_\ell \in V_\ell} \alpha(a_{[k]},b_{[\ell]}) \geq c\]
so by Claim~\ref{biasinterpretation} we have at least $c|V_{[\ell-1]}|$ choices of $b_{[\ell-1]} \in V_{[\ell-1]}$ such that
\begin{equation}
    (\forall b_\ell \in V_\ell)\,\,\alpha(a_{[k]}, b_{[\ell]}) = 0.\label{vanishingequation}
\end{equation}

Let $T \subseteq U_{[k]} \times V_{[\ell-1]}$ be the collection of all $(a_{[k]}, b_{[\ell-1]})$ such that~\eqref{vanishingequation} holds. Since $|S_c| \geq c|U_{[k]}|$, $|T| \geq c^2 |U_{[k]}||V_{[\ell-1]}|$.\\

We now apply a dependent random choice argument. Let $\varepsilon = 2^{-2k}p^{-2k K} c^{2kK}$ and $c' = \frac{c^2 \varepsilon}{2}$. Pick $y_1 \in V_1, \dots, y_{\ell-1} \in V_{\ell-1}$ uniformly and independently at random. Set $A = \{a_{[k]} \in U_{[k]} : (a_{[k]}, y_{[\ell-1]}) \in T\}$ and let $B$ be the set of all $a_{[k]} \in A$ such that $T_{a_{[k]}} := \{z_{[\ell-1]} \in V_{[\ell-1]} : (a_{[k]}, z_{[\ell-1]}) \in T\}$ has size at most $c'|V_{[\ell-1]}|$. We think of points in $B$ as bad points (hence the notation $B$).\\
\indent Observe that for any $a_{[k]} \in U_{[k]}$ the probability of $a_{[k]} \in A$ is precisely $\frac{|T_{a_{[k]}}|}{|V_{[\ell-1]}|}$. By the linearity of expectation we have
\begin{align*}\exx_{y_{[\ell-1]}} \Big(|A| - \varepsilon^{-1}|B|\Big) = &\sum_{a_{[k]} \in U_{[k]}} \mathbb{P}(a_{[k]} \in A) - \varepsilon^{-1} \sum_{a_{[k]} \in U_{[k]}} \mathbb{P}(a_{[k]} \in B) \\
= &\sum_{a_{[k]} \in U_{[k]}} \frac{|T_{a_{[k]}}|}{V_{[\ell-1]}} - \varepsilon^{-1} \sum_{a_{[k]} \in U_{[k]}} \frac{|T_{a_{[k]}}|}{|V_{[\ell-1]}|} \id\Big(|T_{a_{[k]}}| \leq c' V_{[\ell-1]}\Big)\\
\geq & \frac{|T|}{V_{[\ell-1]}} - \varepsilon^{-1} c' |U_{[k]}| \geq \frac{c^2}{2}|U_{[k]}|.\end{align*}

Hence, there exists a choice of $y_{[\ell-1]}$ which gives $|A| \geq \frac{c^2}{2}|U_{[k]}|$ and $|B| \leq \varepsilon |A|$.\\

Note that $A$ itself is a multilinear variety. Namely,
\begin{align*}A = & \{a_{[k]} \in U_{[k]} : (a_{[k]}, y_{[\ell-1]}) \in T\} \\
= &\Big\{a_{[k]} \in U_{[k]} : (\forall z_\ell \in V_\ell)\,\,\alpha(a_{[k]}, y_{[\ell-1]}, z_\ell) = 0 \Big\}\\
= & \bigcap_{z_\ell \in V_\ell} \Big\{a_{[k]} \in U_{[k]} : \alpha(a_{[k]}, y_{[\ell-1]}, z_\ell) = 0\Big\}.
\end{align*}
Since $|A| \geq \frac{c^2}{2}|U_{[k]}|$, by Theorem~\ref{weakinverse}, it contains a multilinear variety $W$ of codimension $r$ for some $r \leq K(2 \log_p c^{-1} + 2)$. Recall that $|B| \leq \varepsilon |A|$. In particular, by our choice of $\varepsilon$,
\[|W \cap B|\leq |B| \leq \varepsilon|U_{[k]}| \leq  |U_{[k]}| \leq 2^{-2k} p^{-kr} |U_{[k]}|,\]
so by Lemma~\ref{convvar}, we have
\begin{equation}\bigconv{k}\bigconv{k-1} \dots \bigconv{1} \id_{W \setminus B}(a_{[k]}) > 0\label{convgenvar}\end{equation}
for all $a_{[k]} \in W$. The following claim allows us to reach the desired conclusion.

\begin{claim}
    Suppose that $a_{[k]}$ and $b_{[k]}$ differ only at coordinate $i$. Define $c_j = a_j = b_j$ for $j \not= i$, and $c_i = a_i - b_i$. If $|T_{a_{[k]}}|, |T_{b_{[k]}}| \geq \delta |V_{[\ell-1]}|$, then $|T_{c_{[k]}}| \geq \delta^2 |V_{[\ell-1]}|$.
\end{claim}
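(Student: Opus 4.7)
The plan is to translate the cardinality conditions $|T_{a_{[k]}}|, |T_{b_{[k]}}| \geq \delta |V_{[\ell-1]}|$ into bias conditions on the multilinear forms $\alpha_{a_{[k]}}, \alpha_{b_{[k]}} : V_{[\ell]} \to \mathbb{F}_p$ via Claim~\ref{biasinterpretation}, and then to deduce the desired inequality directly from the subadditivity of bias given by Lemma~\ref{lovettcorrelation}~(ii).

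The first step I would carry out is to exploit the linearity of $\alpha$ in its $i$-th slot. Since $a_j = b_j$ for all $j \neq i$ and $c_i = a_i - b_i$, for every $y_{[\ell]} \in V_{[\ell]}$ we have
\[
\alpha_{c_{[k]}}(y_{[\ell]}) = \alpha(a_1, \dots, a_{i-1}, a_i - b_i, a_{i+1}, \dots, a_k, y_{[\ell]}) = \alpha_{a_{[k]}}(y_{[\ell]}) - \alpha_{b_{[k]}}(y_{[\ell]}),
\]
so as multilinear forms on $V_{[\ell]}$, the identity $\alpha_{c_{[k]}} = \alpha_{a_{[k]}} - \alpha_{b_{[k]}}$ holds.

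The second step is the bias argument. Applying Claim~\ref{biasinterpretation} to $\alpha_{a_{[k]}}$ (with the distinguished coordinate being $V_\ell$), the hypothesis $|T_{a_{[k]}}| \geq \delta |V_{[\ell-1]}|$ is precisely $\on{bias}(\alpha_{a_{[k]}}) \geq \delta$, and likewise for $b_{[k]}$. Combining this with the previous step and Lemma~\ref{lovettcorrelation}~(ii) yields
\[
\on{bias}(\alpha_{c_{[k]}}) = \on{bias}\bigl(\alpha_{a_{[k]}} + (-\alpha_{b_{[k]}})\bigr) \geq \on{bias}(\alpha_{a_{[k]}}) \cdot \on{bias}(-\alpha_{b_{[k]}}) = \on{bias}(\alpha_{a_{[k]}}) \cdot \on{bias}(\alpha_{b_{[k]}}) \geq \delta^2,
\]
where the middle equality uses the elementary fact that negating a multilinear form preserves its bias (via the change of variables $y_1 \mapsto -y_1$ in its first slot). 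Re-applying Claim~\ref{biasinterpretation} in the opposite direction to $\alpha_{c_{[k]}}$ then delivers $|T_{c_{[k]}}| \geq \delta^2 |V_{[\ell-1]}|$, as required.

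No serious obstacle is anticipated: once the form $\alpha_{c_{[k]}}$ is identified with the difference $\alpha_{a_{[k]}} - \alpha_{b_{[k]}}$, the claim is essentially just an instance of subadditivity of bias, packaged through the size interpretation of Claim~\ref{biasinterpretation}. The only minor technicality is verifying the sign-invariance of bias, which is immediate.
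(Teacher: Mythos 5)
Your proof is correct, and it rests on the same two ingredients as the paper's: the observation that linearity of $\alpha$ in the $i$-th slot gives $\alpha_{c_{[k]}} = \alpha_{a_{[k]}} - \alpha_{b_{[k]}}$, and Lovett's positive-correlation result (Lemma~\ref{lovettcorrelation}). The difference is purely in packaging: the paper works set-theoretically, noting that $T_{a_{[k]}}$ and $T_{b_{[k]}}$ are strictly-multilinear varieties of density at least $\delta$, invoking part (i) to bound $|T_{a_{[k]}} \cap T_{b_{[k]}}|$, and observing that this intersection is contained in $T_{c_{[k]}}$; you instead translate the size hypotheses into the statement $\on{bias}(\alpha_{a_{[k]}}), \on{bias}(\alpha_{b_{[k]}}) \geq \delta$ via Claim~\ref{biasinterpretation}, apply the subadditivity formulation in part (ii) (together with the sign-invariance of bias), and translate back. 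Since the paper explicitly remarks that parts (i) and (ii) of Lemma~\ref{lovettcorrelation} are equivalent reformulations, the two proofs amount to the same argument; the paper's version is marginally more direct in that it avoids the round trip through Claim~\ref{biasinterpretation}, while yours makes the bias subadditivity the visible driving force.
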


\begin{proof}
    Similarly to above, for given $a_{[k]}$, we have that the set of all $z_{[\ell-1]} \in V_{[\ell-1]}$ such that~\eqref{vanishingequation} holds is a strictly multilinear variety. It is of density $\delta$ by assumptions. Analogous conclusion holds for $b_{[k]}$. By Lemma~\ref{lovettcorrelation}, the intersection of these strictly multilinear varieties has density at least $\delta^2$. Let $z_{[\ell-1]}$ be any point in the intersection. Then
    \[\alpha(c_{[k]}, z_{[\ell]}) = \alpha(a_{[k]}, z_{[\ell]}) - \alpha(b_{[k]}, z_{[\ell]}) = 0\]
    holds for all $z_\ell \in V_\ell$, so $|T_{c_{[k]}}| \geq \delta^2 |V_{[\ell - 1]}|$.
\end{proof}

As a consequence of the claim and~\eqref{convgenvar}, we have $|T_{a_{[k]}}| \geq {c'}^{2^k}|V_{[\ell - 1]}|$ for all $a_{[k]} \in W$. By Claim~\ref{biasinterpretation}, we obtain $a_{[k]} \in S_{\tilde{c}}$ for $\tilde{c} = {c'}^{2^k}$.
\end{proof}

\textbf{Application to M\"obius uniformity.} The theorem above allows us to quickly complete the second step of the proof of our main result. The assumptions in the following proposition stem from the conclusion of Proposition~\ref{passtomultilemma}.

\begin{proposition}\label{mobiuscorrvariety}
    There exists a constant $C_{2} \geq 1$ depending only on $k$ such that the following holds. Suppose that the multilinear form
    \[x_{[k]} \mapsto \sum_{\pi \in \on{Sym}_k} \mathcal{L}_Q(a_1x_{\pi(1)}, a_2 x_{\pi(2)},\dots, a_k x_{\pi(k)})\]
    defined on $(G_{n - m})^k$, has bias at least $c$ for at least $c |G_{m}|^k$ choices of $a_{[k]} \in (G_{m})^k$. Then there exists a multilinear variety $W \subseteq (G_{m})^k$ of codimension at most $C_{2} \log_p(2c^{-1})$ such that for every $a_{[k]} \in W$, we have
    \[\on{bias}_{x_1, \dots, x_k \in G_{n-m}} \sum_{\pi \in \on{Sym}_d}  \mathcal{L}_Q(a_1x_{\pi(1)}, a_2 x_{\pi(2)},\dots, a_k x_{\pi(k)}) \geq (c/2)^{C_{2}}.\]
\end{proposition}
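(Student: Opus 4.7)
The plan is to apply Theorem~\ref{varietyofbiasedmaps} directly, with the ambient multilinear form taken to be
\[\alpha(a_{[k]}, x_{[k]}) = \sum_{\pi \in \on{Sym}_k} \mathcal{L}_Q(a_1 x_{\pi(1)}, \dots, a_k x_{\pi(k)})\]
on $(G_m)^k \times (G_{n-m})^k$, with the theorem's $U_i$ set to $G_m$ and $V_j$ set to $G_{n-m}$ for each $i, j \in [k]$. The first step is to verify that $\alpha$ is genuinely multilinear in all $2k$ arguments: for each $i$, the variable $a_i$ appears exactly once and linearly, in the $i$-th slot of every summand $\mathcal{L}_Q(a_1 x_{\pi(1)}, \dots, a_k x_{\pi(k)})$, and $x_j$ appears exactly once and linearly, in the $\pi^{-1}(j)$-th slot; since each summand is multilinear in the $2k$ variables, so is their sum. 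With this identification, the hypothesis of the proposition reads precisely as $|S_c| \geq c|(G_m)^k|$, where $S_c$ is the set introduced in Theorem~\ref{varietyofbiasedmaps}.

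Applying Theorem~\ref{varietyofbiasedmaps} immediately produces a multilinear variety $W \subseteq (G_m)^k$ of codimension at most $K(2 \log_p c^{-1} + 2)$ contained in $S_{\tilde c}$, where
\[\tilde c = 2^{-2^{k+2}k} \, p^{-2^{k+1}kK} \, c^{2^{k+2}kK}\]
and $K = K(k)$ is the constant from Theorem~\ref{weakinverse}. Unpacking the definition of $S_{\tilde c}$, every $a_{[k]} \in W$ satisfies
\[\on{bias}_{x_1, \dots, x_k \in G_{n-m}} \sum_{\pi \in \on{Sym}_k} \mathcal{L}_Q(a_1 x_{\pi(1)}, \dots, a_k x_{\pi(k)}) \geq \tilde c,\]
which matches the desired conclusion up to the explicit shape of the constants.

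What remains is routine constant bookkeeping. Recalling that $p$ is fixed throughout the paper, so that all constants may implicitly depend on it in addition to $k$, I would choose $C_2$ large enough to simultaneously (i) absorb the additive term $+2K$ in the codimension bound into $C_2 \log_p(2c^{-1})$ via $\log_p 2 > 0$, and (ii) ensure $(c/2)^{C_2} \leq \tilde c$: the exponent of $c$ is handled by demanding $C_2 \geq 2^{k+2}kK$, while a further increase in $C_2$ swallows the multiplicative constants $2^{-2^{k+2}k}$ and $p^{-2^{k+1}kK}$ using $\log_p 2 > 0$. There is no genuine obstacle in this proposition; its role is simply to repackage Theorem~\ref{varietyofbiasedmaps} into the form in which it is actually applied at the next step of the outline.
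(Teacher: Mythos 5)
Your proposal is correct and is essentially identical to the paper's own proof: the paper also simply observes that $(a_{[k]}, x_{[k]}) \mapsto \sum_{\pi \in \on{Sym}_k} \mathcal{L}_Q(a_1 x_{\pi(1)}, \dots, a_k x_{\pi(k)})$ is a multilinear form in all $2k$ variables and then invokes Theorem~\ref{varietyofbiasedmaps} directly, with no further argument. Your verification of multilinearity and the unpacking of $S_{\tilde c}$ are exactly what is implicitly going on.

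One small caution about the bookkeeping paragraph: you write that since $p$ is fixed, constants may implicitly depend on $p$ as well as $k$. That is a misreading of the paper's conventions --- the proposition states $C_2$ depends only on $k$, and Theorem~\ref{weakinverse} explicitly says $K = K(k)$ is independent of $p$. And indeed, matching $K(2\log_p c^{-1} + 2)$ against $C_2 \log_p(2c^{-1})$ with a $p$-free $C_2$ fails in the regime $1/p < c \leq 1$, since then the left side is about $4K$ while $\log_p(2c^{-1})$ can be as small as $\log_p 2 \to 0$. This is really a wrinkle in the paper's own phrasing (its proof of the proposition skips all bookkeeping; writing the bound as $C_2(\log_p c^{-1}+1)$ would avoid the issue), and in the only application $c \ll 1/p$, so it is harmless --- but you should not resolve it by silently allowing $p$-dependence, as that would break the chain of constants leading to $\varepsilon_k$ in Theorem~\ref{main_thm}.
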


\begin{proof}
    The map $\alpha: G_{m}^k\times G_{n-m}^k \to \mathbb{F}_p$, given by
    \[(a_1, \dots, a_k, x_1, x_2,\dots, x_k) \mapsto \sum_{\pi \in \on{Sym}_k}  \mathcal{L}_Q(a_1x_{\pi(1)}, a_2 x_{\pi(2)},\dots, a_k x_{\pi(k)})\]
    is multilinear in all of its $2k$ variables. Thus Theorem~\ref{varietyofbiasedmaps} applies to give the desired multilinear variety $W$.
\end{proof}

\section{Obtaining large bias in $\mathcal{L}_Q$}\label{LQhighbias}

The proof in this section uses a method inspired by the one in~\cite{BL} which Bienvenu and L\^{e} used to complete their argument. Starting from the conclusion of Proposition~\ref{mobiuscorrvariety} the goal is to deduce that $\mathcal{L}_Q$ has large bias. Bienvenu and L\^{e} considered ranks of matrices and used a combinatorial partitioning argument to control the rank. In our case, we work with bias as the measure of the structure of multilinear forms, so we phrase our argument analytically.\\

Let us state the main claim of this section.

\begin{proposition}\label{psitolqlargebiasstep}
    There exists a constant $C_{3} \geq 1$ depending only on $k$ such that the following holds. Let $d$ and $m$ be such that $d \leq m \leq \frac{17n}{18}$ and $d \leq \frac{n}{18(k+2)}$ and suppose that we have a multilinear variety $W \subseteq G_m^k$ of codimension at most $r$ such that for every $a_{[k]} \in W$, the multilinear form $\psi_{a_{[k]}} : G_{n-m} ^k \to \mathbb{F}_p$ defined by
    \[\psi_{a_{[k]}}(x_{[k]}) = \sum_{\sigma \in \on{Sym}_d} \mathcal{L}_Q (a_1x_{\sigma(1)}, a_2 x_{\sigma(2)},\dots, a_k x_{\sigma(k)})\]
    has bias at least $c$. If $d > k r \Big(\frac{n}{d}\Big)^k$, then $\on{bias} \mathcal{L}_Q \geq p^{-C_3 d} (c/2)^{C_3^{(n/d)^{C_3}}}$.
\end{proposition}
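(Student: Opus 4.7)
The plan is to conclude that $\mathcal{L}_Q$ itself has large bias by (i) using Chevalley--Warning to produce, inside $W$, a collection of tuples $a^{(\vec{j})}_{[k]}$ indexed by degree profiles $\vec{j} = (j_1, \dots, j_k)$ with $0 \le j_i < \lfloor m/d \rfloor$, and with each $a^{(\vec{j})}_i$ having $t^{j_i d}$ as its leading monomial; (ii) extracting, from the bias of each $\psi_{a^{(\vec{j})}_{[k]}}$, the bias of the single ``identity-permutation'' form $(x_{[k]}) \mapsto \mathcal{L}_Q\bigl(a^{(\vec{j})}_1 x_1, \dots, a^{(\vec{j})}_k x_k\bigr)$; and (iii) piecing these local bias estimates together, via multilinearity of $\mathcal{L}_Q$ together with Lemmas~\ref{lovettcorrelation} and~\ref{formsRestrictionLemma2}, to obtain bias of $\mathcal{L}_Q$ on $G_n^k$.

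For step (i), I would parameterize $a_i = t^{j_i d} + \sum_{\ell < j_i d} c_{i,\ell} t^\ell$; the $r$ multilinear forms defining $W$ then become polynomials of total degree at most $kr$ in $N_{\vec{j}} = \sum_i j_i d$ free coefficients $c_{i,\ell}$. The quantitative condition $d > kr (n/d)^k$ provides just enough slack for a Chevalley--Warning style argument to yield such a tuple in $W$ for each of the $O((n/d)^k)$ profiles of interest. For step (ii), I would further restrict $x_i$ to cosets with prescribed top monomials $t^{\beta_i}$ so that, for any permutation $\sigma \in \on{Sym}_k$, the products $a^{(\vec{j})}_{\sigma(i)} x_i$ have leading degrees $j_{\sigma(i)} d + \beta_i$. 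An application of Lemma~\ref{qineq} to the sequences $(j_i d)$ and $(\beta_i)$ then shows that the profile of leading degrees is strictly distinct for $\sigma = \on{id}$ versus $\sigma \ne \on{id}$, which makes it possible to separate the permutation sum (after an additional averaging over shifts of the $x_i$) and isolate the single term $\beta_{a^{(\vec{j})}_{[k]}}(x_{[k]}) = \mathcal{L}_Q(a^{(\vec{j})}_1 x_1, \dots, a^{(\vec{j})}_k x_k)$.

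For step (iii), as $\vec{j}$ ranges over the chosen family of profiles, the subspaces $\prod_i a^{(\vec{j})}_i G_{n-m}$ collectively cover a product $U^k$ with $U \leq G_n$ of codimension $O(d)$. By iteratively combining the $O((n/d)^k)$ local bias estimates using the subadditivity of bias (Lemma~\ref{lovettcorrelation}(ii)), one obtains $\on{bias} \mathcal{L}_Q|_{U^k} \geq (c/2)^{C_3^{(n/d)^{C_3}}}$, where the tower exponent reflects the iteration count. Finally, Lemma~\ref{formsRestrictionLemma2} converts this into $\on{bias} \mathcal{L}_Q \geq p^{-C_3 d}(c/2)^{C_3^{(n/d)^{C_3}}}$, since the codimension of $U^k$ inside $G_n^k$ is $O(d)$.

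The main obstacle is the $\on{Sym}_k$-sum in $\psi$: since bias is not sub-multiplicative in reverse, one cannot simply split the sum into individual biases. Chevalley--Warning is essential for producing $a^{(\vec{j})}_{[k]}$ whose leading degrees are arbitrary and in particular arbitrarily spread out, and Lemma~\ref{qineq} provides the strict rearrangement-style argument that collapses the permutation sum to its identity term. This is the point where the $k > 2$ case becomes substantially more delicate than the $k = 2$, rank-based argument of Bienvenu--L\^e, and it is also the source of the iterated-exponential dependence on $n/d$ in the final bound.
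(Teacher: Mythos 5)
Your high-level plan --- Chevalley--Warning to produce structured points in $W$, the rearrangement inequality Lemma~\ref{qineq} to handle the $\on{Sym}_k$-sum, and Lovett's subadditivity to combine --- is indeed the paper's approach. But two concrete pieces of your execution would fail.

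In step (i), you propose to find a \emph{separate} tuple $a^{(\vec j)}_{[k]} \in W$ for each degree profile $\vec j$, by independent Chevalley--Warning applications. The paper instead applies Chevalley--Warning once, to find a single non-zero $w \in G_d$ with $(wt^{i_1 d}, \dots, wt^{i_k d}) \in W$ simultaneously for all $i_{[k]} \in [0, g-1]^k$, where $g = \lfloor m/d\rfloor$; this is a system of at most $rg^k \le r(n/d)^k$ polynomials of degree at most $k$ in $d$ unknowns, and $d > kr(n/d)^k$ is exactly the Chevalley--Warning threshold. This coherence is not a convenience but the whole point of step (iii): the single $w$ gives a basis $1, t, \dots, t^{\deg w - 1}, w, wt, \dots$ of $G_n$, hence a direct-sum decomposition of $G_n$ (up to small residue and overflow parts) into blocks $wt^{id}G_d$, and $\mathcal{L}_Q$ restricted through this decomposition expands multilinearly into a sum of $s^k$ profile-indexed forms $\mathcal{L}_{i_{[k]}}$ whose biases multiply by Lemma~\ref{lovettcorrelation}(ii). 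With $O((n/d)^k)$ unrelated tuples $a^{(\vec j)}_{[k]}$, the subspaces $a^{(\vec j)}_i G_{n-m}$ have no joint structure; your claim that they ``collectively cover a product $U^k$ of codimension $O(d)$'' has no justification, and there is no block decomposition along which to expand.

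Step (ii) is also not a valid move as stated. You cannot ``separate the permutation sum and isolate the single term'' $\mathcal{L}_Q(a_1 x_1, \dots, a_k x_k)$ merely because the leading-degree profiles of the different permutation terms are distinct: bias is not additive, and no averaging over shifts extracts one summand of a multilinear form from the others. The paper instead runs an induction over profiles ordered by the sum-of-squares invariant. After restricting $\psi_{wt^{\ell_1 d}, \ldots, wt^{\ell_k d}}$ to a product of shifted blocks $t^{a_\iota d}G_d$, the permutation sum becomes (a nonzero multiple of) the target form $\mathcal{L}_{v}$ plus at most $k!-1$ forms $\mathcal{T}_\sigma$, and Lemma~\ref{qineq} is used precisely to show that each $\mathcal{T}_\sigma$ is a permutation of some $\mathcal{L}_{v'}$ with $v'$ \emph{strictly earlier} in the ordering. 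Those biases are then available inductively, and $\on{bias}\mathcal{L}_v$ is recovered by one more application of Lemma~\ref{lovettcorrelation}(ii). Your final tower bound suggests you intuit the need for such an iteration, but the mechanism you name (Fourier-style separation by leading degree) cannot produce it; the induction plus subadditivity is the part of the argument that is actually missing from your proposal.
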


\noindent\textbf{Remark.} With a more careful analysis at the end of the proof, we may be more efficient and decrease the exponent $C_3^{(n/d)^{C_3}}$. However, as we eventually apply Proposition~\ref{psitolqlargebiasstep} with the ratio $n/d$ being a constant depending only on $k$, this change would only affect the term $\varepsilon_k$ in the main result (Theorem~\ref{main_thm}) at the cost of complicating the notation.

\begin{proof}
    Let $g = \lfloor m/d\rfloor$, and $s = g -1 + \lfloor (n-m)/d\rfloor$. In particular, these choices imply $s \leq n/d$ and $s - g + 1 = \lfloor (n-m)/d\rfloor$, further giving $d(s - g + 1) \leq n - m$. Note that, since $m \leq \frac{17n}{18}$, we have $d \leq \frac{n - m}{k + 2}$, so $k \leq \frac{n-m}{d} - 2 \leq \lfloor (n-m)/d\rfloor - 1 = s - g$, which will be important later (see Claim~\ref{aellpropsclaim}).\\
    
    In this proof, we shall first find a suitable polynomial $w$ of degree at most $d$, and consider the basis of $G_n$ given by $1, t, \dots, t^{\deg w - 1}, w, wt, \dots, wt^{n - 1- \deg w}$. Using this basis, we may find linear maps $\pi_{\on{res}} : G_n \to G_{\deg w}$, $\pi_0 : G_n \to G_d, \dots, \pi_{s - 1} : G_n \to G_d$ and $\pi_{\on{over}} : G_n \to w t^{sd} \cdot G_{n - \deg w - sd}$ such that for each $x \in G_n$ we have
    \[x = \pi_{\on{res}}(x) + w \pi_0(x) + wt^d \pi_1(x) + \dots + wt^{(s-1)d} \pi_{s-1}(x) + \pi_{\on{over}}(x).\]
    (To explain the subscripts, we remark that $\pi_{\on{res}}$ arises as the residue in division by $w$ and $\pi_{\on{over}}$ is the 'overflowing' part.)\\
    
    Let us also write $\pi_{\on{main}}(x) = w \pi_0(x) + wt^d \pi_1(x) + \dots + wt^{(s-1)d} \pi_{s-1}(x)$, so $x = \pi_{\on{res}}(x) + \pi_{\on{main}}(x) + \pi_{\on{over}}(x)$. Using these linear maps, we may express $\mathcal{L}_Q$ using forms defined on smaller domains. Namely

    \begin{align*}\mathcal{L}_Q(x_1, \dots, x_k) = & \mathcal{L}_Q(\pi_{\on{res}}(x_1) + \pi_{\on{main}}(x_1) + \pi_{\on{over}}(x_1), \dots, \pi_{\on{res}}(x_k) + \pi_{\on{main}}(x_k) + \pi_{\on{over}}(x_k)) \\
    = &\sum_{\phi_1, \dots, \phi_k \in \{\pi_{\on{res}}, \pi_{\on{main}}, \pi_{\on{over}}\}} \mathcal{L}_Q(\phi_1(x_1), \dots, \phi_k(x_k)),\end{align*}

    where in the last line we used multilinearity of $\mathcal{L}_Q$ to reduce to a sum over all choices of forms $\phi_1, \dots, \phi_k$ among $\pi_{\on{res}}, \pi_{\on{main}}, \pi_{\on{over}}$.\\

    By Lemma~\ref{lovettcorrelation}, it suffices to get lower bounds on the bias of $\mathcal{L}_Q(\phi_1(x_1), \dots, \phi_k(x_k))$ to get the lower bound on $\on{bias} \mathcal{L}_Q$.\\
    \indent We observe that the bias is large when some $\phi_i \not= \pi_{\on{main}}$. Namely, we have
    \[\on{bias}\mathcal{L}_Q \circ (\phi_1, \dots, \phi_k) = \sum_{v \in \on{Im} \phi_i} \exx_{x_i} \id(\phi_i(x_i) = v) \exx_{x_{[k] \setminus \{i\}}} \chi \circ \mathcal{L}_Q(\phi_1(x_1), \dots, \phi_{i-1}(x_{i-1}), v, \phi_{i+1}(x_{i+1}), \dots, \phi_{k}(x_{k})).\]

    For fixed $v$, the expression
    \[\exx_{x_{[k] \setminus \{i\}}} \chi \circ \mathcal{L}_Q(\phi_1(x_1), \dots, \phi_{i-1}(x_{i-1}), v, \phi_{i+1}(x_{i+1}), \dots, \phi_{k}(x_{k}))\]
    is the bias of a multilinear form in $k-1$ variables, so it is a non-negative real and it is 1 when $v = 0$. So the contribution from $v= 0$ is at least $\ex_{x_i} \id(\phi_i(x_i) = 0) = |\on{ker}\phi_i|/|G_n| = 1/|\on{Im} \phi_i| \geq p^{-2d}$, since ranks of $\pi_{\on{over}}$ and $\pi_{\on{res}}$ are at most $2d$. By Lemma~\ref{lovettcorrelation}, we thus obtain
    \begin{equation}\label{smallpartsremovallqbias}\on{bias} \mathcal{L}_Q \geq p^{-2\cdot3^k d} \on{bias}\mathcal{L}_Q \circ (\pi_{\on{main}}, \dots, \pi_{\on{main}}).\end{equation}

    \textbf{Bias contribution from the main part.} Again, using multilinearity, we have
    \begin{equation}\mathcal{L}_Q(\pi_{\on{main}}(x_1), \dots, \pi_{\on{main}}(x_k)) = \sum_{i_1, \dots, i_k \in [0, s-1]^k} \mathcal{L}_Q(wt^{i_1d} \pi_{i_1}(x_1), \dots, wt^{i_kd} \pi_{i_k}(x_k)).\label{lqmaindecomposition}\end{equation}

    To control the bias of the multilinear forms on the right-hand-side, we make a choice of adequate $w \in G_d$ first. Recall that we have a special multilinear variety $W$ of codimension at most $r$.

    \begin{claim}
        There exists a non-zero $w\in G_{d}$ such that $\on{bias} (\psi_{wt^{i_1d}, \ldots, wt^{i_kd}}) \geq c$ for any choice $i_{[k]} \in [0, g-1]^k$.     
    \end{claim}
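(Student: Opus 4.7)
The plan is to reduce the claim to an application of Chevalley's theorem on the number of zeros of low-degree polynomial systems. Since the multilinear variety $W$ is defined by at most $r$ multilinear forms $\alpha_1,\ldots,\alpha_r$, where each $\alpha_s$ takes $k$ arguments from $G_m$, the conclusion $(wt^{i_1 d},\ldots,wt^{i_k d})\in W$ automatically gives $\on{bias}(\psi_{wt^{i_1 d},\ldots,wt^{i_k d}})\geq c$ by hypothesis. So the task is to find a single nonzero $w\in G_d$ with $(wt^{i_1 d},\ldots,wt^{i_k d})\in W$ simultaneously for every $i_{[k]}\in[0,g-1]^k$.

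First I would check that these tuples actually live in $G_m^k$, which is the ambient space of $W$. Writing $w\in G_d$ means $\deg w\leq d-1$, so $\deg(wt^{jd})\leq d-1+(g-1)d=gd-1\leq m-1$ (using $g=\lfloor m/d\rfloor$), hence $wt^{jd}\in G_m$ as required. Next, parametrize $w=\sum_{l=0}^{d-1}w_l t^l$ by its $d$ coefficients $w_0,\ldots,w_{d-1}\in\mathbb{F}_p$, viewed as unknowns. For each choice of $s\in[r]$ and $i_{[k]}\in[0,g-1]^k$, the expression $\alpha_s(wt^{i_1 d},\ldots,wt^{i_k d})$ is a multilinear function of its $k$ arguments, and each argument depends linearly on $(w_0,\ldots,w_{d-1})$; thus it is a polynomial in $w_0,\ldots,w_{d-1}$ of total degree at most $k$.

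Now apply Chevalley's theorem to the polynomial system consisting of all these equations. There are at most $rg^k$ equations, each of degree at most $k$, so the sum of degrees is at most $krg^k$. By the hypothesis $d>kr(n/d)^k$ together with $g\leq m/d\leq n/d$, we have $d>krg^k$, i.e., the number of variables exceeds the sum of degrees. Since $w=0$ is trivially a common zero of all these polynomials (because every $\alpha_s$ vanishes on the all-zero tuple), Chevalley's theorem guarantees that the number of common zeros is divisible by $p$, and hence there exists a nonzero $w\in G_d$ satisfying all the equations simultaneously. For this $w$, every tuple $(wt^{i_1 d},\ldots,wt^{i_k d})$ lies in $W$, so the bias condition follows.

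The only subtlety is just the bookkeeping to verify $d>krg^k$ from the hypothesis $d>kr(n/d)^k$; there is no substantial obstacle beyond arranging the linear/Chevalley argument correctly.
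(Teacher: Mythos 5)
Your proof is correct and follows essentially the same route as the paper's: both reduce the claim to finding a nonzero $w\in G_d$ with $(wt^{i_1 d},\ldots,wt^{i_k d})\in W$ for all $i_{[k]}\in[0,g-1]^k$, observe that this is a system of polynomial equations of degree at most $k$ in the $d$ coefficients of $w$, and invoke Chevalley--Warning together with the hypothesis $d>kr(n/d)^k$ to extract a nonzero solution. Your accounting is in fact slightly sharper (you bound the number of equations by $rg^k$ where the paper uses the cruder $rs^k$, both of which suffice), and your explicit check that $wt^{jd}\in G_m$ is a useful detail the paper elides.
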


    \begin{proof}
        It suffices to find some non-zero $w \in G_d$ such that $(wt^{i_1d}, \ldots, wt^{i_kd}) \in W$ holds for all $i_{[k]}\in [0, g-1]^k$. Note that $W$ is defined as the zero set of at most $r$ multilinear forms, namely, we have some $\alpha_j: G_m^{J_j} \to \mathbb{F}_p$, for $j \in [r]$, where $J_j$ is a non-empty subset of $[k]$ (but possibly proper) and $W$ is the set of all $x_{[k]} \in G_m^k$ such that for each $j\in [r]$ we have $\alpha_j(x_{J_j}) = 0$. Hence, we need to find $w \in G_{d}$ such that $\alpha_j(wt^{i_
        \ell d}: \ell \in J_j) = 0$ for all choices of $j \in [r], i_{[k]}\in [0,g-1]^k$. In other words, $w$ is a solution of a system of at most $r s^k$ polynomial equations of degree at most $k$. As $\dim G_{d} = d > k \Big(\frac{n}{d}\Big)^k r \geq k s^k r$, Chevalley-Warning theorem applies, so the number of such $w \in G_{m}$ is divisible by $p$. Since $w = 0$ is one such solution, there exists another one.
    \end{proof}

    Take $w$ provided by the claim above. We shall use the large bias of $\psi_{wt^{i_1d}, \ldots, wt^{i_kd}}$ to deduce the large bias of $\mathcal{L}_Q \circ (\pi_{\on{main}}, \dots, \pi_{\on{main}})$. Note that for any $i_1, \dots, i_k \in [0,g-1]$, we have
    \[\on{bias} \psi_{wt^{i_1d}, \ldots, wt^{i_kd}} = \exx_{z_1, \dots, z_k \in G_{n-m}} \chi\circ\Big(\sum_{\sigma \in \on{Sym}_k} \mathcal{L}_Q(wt^{i_1d} z_{\sigma(1)}, \dots, wt^{i_k d} z_{\sigma(k)})\Big).\]
    Since $\mathcal{L}_Q$ is symmetric, we have $\mathcal{L}_Q(wt^{i_1d} z_{\sigma(1)}, \dots, wt^{i_k d} z_{\sigma(k)}) = \mathcal{L}_Q(wt^{i_{\sigma^{-1}(1)}d} z_{1}, \dots, wt^{i_{\sigma^{-1}(k)} d} z_{k})$, so
    \[\on{bias} \psi_{wt^{i_1d}, \ldots, wt^{i_kd}} = \exx_{z_1, \dots, z_k \in G_{n- m}} \chi\circ\Big(\sum_{\sigma \in \on{Sym}_k} \mathcal{L}_Q(wt^{i_{\sigma^{-1}(1)}d} z_{1}, \dots, wt^{i_{\sigma^{-1}(k)} d} z_{k})\Big).\]

    Observe that by Lemma~\ref{formsRestrictionLemma1}, we may restrict the averages to any product of subspaces of $G_{n-m}$ without decreasing the bias. This will be crucial in order to complete the proof.\\

    For $i_{[k]} \in [0,s-1]^k$, let $\mathsf{f}(i_{[k]})$ be the number of permutations $\sigma \in \on{Sym}_k$ which fix the sequence, namely $i_{\sigma^{-1}(1)} = i_1, \dots, i_{\sigma^{-1}(k)} = i_k$. If $a_1, \dots, a_\ell$ are multiplicities of values appearing in the sequence, we have $\mathsf{f}(i_{[k]}) = a_1! \cdots a_\ell!$, which is non-zero modulo $p$.\\

    Note that, as $x$ ranges over $G_n$, the $s$-tuple $(\pi_0(x), \dots, \pi_{s-1}(x))$ ranges uniformly over $G_d^s$. Going back to~\eqref{lqmaindecomposition}, and introducing variables $y_{j,j'} \in G_d$ for $j \in [k], j' \in [0,s-1]$, we have 
    \begin{align*}
        \on{bias} \mathcal{L}_Q \circ (\pi_{\on{main}}, \dots,\pi_{\on{main}}) = \exx_{y_{1,0}, \dots, y_{k, s-1} \in G_d} \chi\circ\Big(\sum_{i_1, \dots, i_k \in [0, s-1]} \mathcal{L}_Q(wt^{i_1d} y_{1, i_1}, \dots, wt^{i_kd} y_{k, i_k})\Big).
    \end{align*}

    Note that we may view $\mathcal{L}_Q(wt^{i_1d} y_{1, i_1}, \dots, wt^{i_kd} y_{k, i_k})$ as a multilinear form in $k$ variables where the $i$\tss{th} variable is $(y_{i,0}, \dots, y_{i, s-1})$. By Lemma \ref{lovettcorrelation} it is sufficient to show that these maps have high biases. Furthermore, for each $i_{[k]}\in [0,s-1]^k$ denote by $\mathcal{L}_{i_{[k]}}$ the multilinear form on $G_d^k$ given by $x_{[k]} \mapsto  \mathcal{L}_Q(wt^{i_1d} x_1, \dots, wt^{i_kd} x_k)$.
    As the bias of the form $\mathcal{L}_Q(wt^{i_1d} y_{1, i_1}, \dots, wt^{i_kd} y_{k, i_k})$ equals $\on{bias} \mathcal{L}_{i_{[k]}}$, we further reduce the problem to showing that all $\mathcal{L}_{i_{[k]}}$ have high bias.\\

    Another piece of notation we want to introduce is a permutation of a linear map. Given a sequence of natural numbers $e_1, \dots, e_k$, a multilinear form $\mathcal{T}: G_{e_1}\times \ldots \times G_{e_k}\rightarrow \mathbb{F}_p$ and a permutation $\sigma \in \on{Sym}_k$ we denote by $\mathcal{T}^{\sigma}$ the multilinear form on $G_{e_{\sigma(1)}}\tdt G_{e_{\sigma(k)}}$ defined by $\mathcal{T}^{\sigma}(x_1,\ldots, x_k)=\mathcal{T}(x_{\sigma(1)},\ldots ,x_{\sigma(k)})$. Notice that $\on{bias} \mathcal{T}^{\sigma}=\on{bias} \mathcal{T}$. Note that we may only sum linear maps with the same dimensions in each direction. For example, if $e_1 = e_2$ then we can still define $\mathcal{T}+\mathcal{T}^{(12)}$.\\

    \noindent\textbf{Brief overview.} The main strategy of our proof will be to list all the $k$-tuples of $[0,s-1]^k$ in a particular order $v_1, v_2, \ldots, v_{s^k}$ so that
    \begin{itemize}
        \item the first $s$ elements have large bias, namely $\on{bias} \mathcal{L}_{v_i}\ge c$ for all $i\le s$,
        \item for the remaining elements, i.e. when $i > s$, the form $\mathcal{L}_{v_i}$ can be expressed as a sum of at most $k!$ multilinear forms each of which is either a map of bias at least $c$ or a permutation of some form among $\mathcal{L}_{v_{1}}, \dots, \mathcal{L}_{v_{i-1}}$.
    \end{itemize} 
    Notice that, once we achieve this, then by Lemma~\ref{lovettcorrelation} and induction, we will have that $\on{bias} \mathcal{L}_{v_i} \ge c^{k!^i}$. Applying Lemma~\ref{lovettcorrelation} another time gives the lower bound of $c^{s^{k}k!^{s^{k}}}$ on $\on{bias} \mathcal{L}_Q \circ (\pi_{\on{main}}, \dots,\pi_{\on{main}})$, which we combine with~\eqref{smallpartsremovallqbias}, to finish the proof.\\
    
    \noindent\textbf{Completing the proof.} Define a partial order $\prec $ on $[0,s-1]^k$ to satisfy $j_{[k]} \prec \ell_{[k]}$ if $j_1^2+\ldots +j_k^2 < \ell_1^2+\ldots \ell_k^2$. For the first $s$ elements $v_1,\ldots v_s$, we simply set $v_i = (i-1,i-1,\dots,i-1)$, $i \in [s]$. Let us check that $\mathcal{L}_{v_i}$ for these elements have large bias.

    \begin{claim}
        For $i \in [s]$, we have $\on{bias} \mathcal{L}_{v_i} \geq c$. 
    \end{claim}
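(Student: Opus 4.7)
The plan is to derive $\on{bias} \mathcal{L}_{v_i} \ge c$ by choosing $j \in [0, g-1]$ so that $wt^{jd} \in G_m$, invoking the large bias of $\psi_{wt^{jd}, \dots, wt^{jd}}$ guaranteed by the construction of $w$, and then combining symmetry of $\mathcal{L}_Q$ with a restriction of variables.

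For each $i \in [s]$, I would set $j = \min(i-1, g-1)$ and $e = i - 1 - j \ge 0$, so that $(i-1)d = jd + ed$. When $i \le g$ this gives $j = i - 1$ and $e = 0$; when $i > g$ this gives $j = g - 1$ and $e = i - g$, with $e + 1 \le \lfloor (n-m)/d \rfloor$ since $i \le s = g - 1 + \lfloor (n-m)/d \rfloor$. In either case $wt^{jd} \in G_m$ and $t^{ed} G_d \subseteq G_{n-m}$ (using $d \le (n-m)/(k+2)$ for the latter inclusion). The main subtlety is the case $i > g$, where $wt^{(i-1)d}$ need not itself lie in $G_m$ and so the multilinear variety $W$ is not directly applicable to the tuple $(wt^{(i-1)d}, \dots, wt^{(i-1)d})$; absorbing the excess factor $t^{ed}$ into the variable rather than into the coefficient is precisely what makes the argument work in that regime.

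Since $j \in [0, g-1]$, the previously chosen $w$ gives $(wt^{jd}, \dots, wt^{jd}) \in W$, hence $\on{bias} \psi_{wt^{jd}, \dots, wt^{jd}} \ge c$. The symmetry of $\mathcal{L}_Q$ collapses the defining sum to a single term scaled by a factorial, namely $\psi_{wt^{jd}, \dots, wt^{jd}}(z_{[k]}) = k! \cdot \mathcal{L}_Q(wt^{jd} z_1, \dots, wt^{jd} z_k)$. By the proof of Claim~\ref{biasinterpretation}, the bias of a multilinear form equals the density of the zero fiber of its associated multilinear map in the last argument, and in particular is unchanged under multiplication by a non-zero constant of $\mathbb{F}_p$. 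Since $p > k$, the factor $k!$ is invertible in $\mathbb{F}_p$, and therefore the multilinear form $(z_{[k]}) \mapsto \mathcal{L}_Q(wt^{jd} z_1, \dots, wt^{jd} z_k)$ on $G_{n-m}^k$ also has bias at least $c$.

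Finally, I would restrict this form to the subspace $(t^{ed} G_d)^k \subseteq G_{n-m}^k$; by Lemma~\ref{formsRestrictionLemma1} the bias does not decrease. As multiplication by $t^{ed}$ is a linear bijection $G_d \to t^{ed} G_d$, the substitution $z_\ell = t^{ed} x_\ell$ with $x_\ell \in G_d$ rewrites the restricted form as $\mathcal{L}_Q(wt^{jd} t^{ed} x_1, \dots, wt^{jd} t^{ed} x_k) = \mathcal{L}_Q(wt^{(i-1)d} x_1, \dots, wt^{(i-1)d} x_k) = \mathcal{L}_{v_i}(x_{[k]})$, yielding $\on{bias} \mathcal{L}_{v_i} \ge c$ as required.
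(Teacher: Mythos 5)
Your proof is correct and follows essentially the same route as the paper: split on whether $i-1 \le g-1$, invoke the variety $W$ at the diagonal point $(wt^{jd},\dots,wt^{jd})$, and then restrict along $(t^{ed}G_d)^k$ using Lemma~\ref{formsRestrictionLemma1}. You handle one point more carefully than the paper does: the paper states that $\mathcal{L}_{v_i}$ \emph{is} the restriction of $\psi_{wt^{jd},\dots,wt^{jd}}$, when in fact the restriction equals $k!\cdot\mathcal{L}_{v_i}$ because all $k!$ summands coincide by symmetry of $\mathcal{L}_Q$; you explicitly note this scalar is invertible since $p>k$ and that bias is unchanged under a nonzero scalar, which closes that small gap. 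One minor slip: your parenthetical ``using $d \le (n-m)/(k+2)$'' is not the operative bound for $t^{ed}G_d \subseteq G_{n-m}$ --- what you actually need and correctly state is $(e+1)d \le n-m$, which follows from $e+1 \le \lfloor (n-m)/d\rfloor$ (the same inequality the paper uses, namely $d(s-g+1)\le n-m$).
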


    \begin{proof}
        When $i \in [0, g-1]$, then $\mathcal{L}_{v_i}$ is the restriction of the map $\psi_{wt^{(i-1)d}, \dots, wt^{(i-1)d}}$ to $G_d^k$, so by Lemma~\ref{formsRestrictionLemma1}, we get the desired bias. For $i \in [g, s]$, we notice that $\mathcal{L}_{v_i}$ is the restriction of the map $\psi_{wt^{(g-1)d}, \dots, wt^{(g-1)d}}$ to $(t^{(i - g)d}\cdot G_d)^k$, so we are again done by Lemma~\ref{formsRestrictionLemma1}.
    \end{proof}
    
    Let $v_{s+1},v_{s+2}\ldots v_{s^k}$ be the rest of the elements in $[0,s-1]^k$ sorted in any order  such that if $v_i \prec v_j$ then $i < j$, i.e. the $k$-tuples with smaller sum of squares appear earlier. In the rest of the argument, we show that for each $i > s$ we can express $\mathcal{L}_{v_i}$ in terms of a multilinear form of bias at least $c$ and at most $k!-1$ permutations of $\mathcal{L}_{v_j}$ where $j < i$.\\

Let $i > s$ and let $v_i=(j_1,\ldots j_k)$. Since $\mathcal{L}_Q$ is symmetric and thus for any permuation $\sigma \in \on{Sym}_k$ we have $\on{bias} \mathcal{L}_{j_1, \dots, j_k} = \on{bias} \mathcal{L}_{j_{\sigma(1)}, \dots, j_{\sigma(k)}}$, without loss of generality we may assume that $j_1\le j_2\le \ldots \le j_k$. Define another sequence of indices $\ell_1, \ell_2,\ldots ,\ell_k \in [0,s-1]$ inductively as follows. Let $\ell_1 = \min(j_1,g-1)$. Then, for an index $1\leq \iota \leq k-1$, if $j_{\iota + 1} > j_\iota + 1$ then let $\ell_{\iota+1} = \min (j_{\iota+1}-(j_\iota - \ell_\iota) - 1, g-1)$, and if $j_{\iota+1} \le j_\iota + 1$ then let $\ell_{\iota+1}= \ell_\iota$. Now let $a_\iota = j_\iota-\ell_\iota$ for each $\iota \in [k]$. Notice that the $\ell_{[k]}$ and $a_{[k]}$ satisfy the following properties.

\begin{claim}\label{aellpropsclaim} We have
\begin{itemize}
    \item[\textbf{(i)}] $\ell_1 \le \ell_2\le \ldots \le \ell_k$,
    \item[\textbf{(ii)}] $a_1 \le a_2\le \ldots \le a_k$,
    \item[\textbf{(iii)}] $a_\iota \leq s - g$ for every $\iota \in [k]$,
    \item[\textbf{(iv)}] if $\ell_{\iota_1} > \ell_{\iota_2}$ then $a_{\iota_1} > a_{\iota_2}$.
\end{itemize}
\end{claim}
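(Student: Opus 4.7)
The plan is to prove all four parts by an induction on $\iota$ that tracks how each of the two cases in the recursive definition of $\ell_{\iota+1}$ affects the pair $(\ell_\iota, a_\iota)$. It is convenient to rewrite the case $j_{\iota+1}>j_\iota+1$ as $\ell_{\iota+1}=\min(j_{\iota+1}-a_\iota-1,\,g-1)$, since this is what appears when bounding things in terms of $a_\iota$. I will also repeatedly use the elementary facts $\ell_\iota\le g-1$ and $j_\iota\le s-1$.

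For (i), in the case $j_{\iota+1}\le j_\iota+1$ there is nothing to do since $\ell_{\iota+1}=\ell_\iota$ by definition. In the other case, $j_{\iota+1}-a_\iota-1=(j_{\iota+1}-j_\iota)-1+\ell_\iota\ge \ell_\iota+1$, while $\ell_\iota\le g-1$, so $\ell_{\iota+1}\ge\min(\ell_\iota+1,g-1)\ge \ell_\iota$. For (ii), in case 1 one has $a_{\iota+1}=j_{\iota+1}-\ell_\iota\ge j_\iota-\ell_\iota=a_\iota$, and in case 2, $a_{\iota+1}=j_{\iota+1}-\ell_{\iota+1}=\max(a_\iota+1,\,j_{\iota+1}-g+1)\ge a_\iota+1$.

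For (iv), the monotonicity from (i) forces $\iota_1>\iota_2$, and somewhere between them $\ell$ must strictly increase, say at some step $\iota\to\iota+1$. I will show that any such strict increase of $\ell$ entails a strict increase of $a$; combined with (ii), this gives $a_{\iota_1}>a_{\iota_2}$. In case 1, $\ell$ does not move. In case 2, a strict increase of $\ell$ requires $\ell_\iota<g-1$ (otherwise $\ell_{\iota+1}$ is already capped), and then the analysis from (ii) yields $a_{\iota+1}\ge a_\iota+1$.

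The main obstacle is (iii), which crucially needs the constraint $k\le s-g$ established in the opening of this section. I would first prove the auxiliary statement that $\ell_\iota<g-1$ implies $a_\iota\le \iota-1$, by induction on $\iota$: the base case $\iota=1$ is immediate since $\ell_1<g-1$ forces $\ell_1=j_1$ and hence $a_1=0$. For the inductive step, note that $\ell_\iota\le \ell_{\iota+1}<g-1$ by (i), so the inductive hypothesis applies to give $a_\iota\le \iota-1$; in case 1 we then have $a_{\iota+1}\le a_\iota+1\le \iota$, while in case 2 the condition $\ell_{\iota+1}=j_{\iota+1}-a_\iota-1<g-1$ forces $j_{\iota+1}-g+1\le a_\iota$, so the $\max$ in the formula for $a_{\iota+1}$ equals $a_\iota+1$. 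With this auxiliary lemma, (iii) splits into two cases: if $\ell_\iota=g-1$ then $a_\iota=j_\iota-(g-1)\le s-g$, while if $\ell_\iota<g-1$ then $a_\iota\le \iota-1\le k-1\le s-g-1$, invoking $k\le s-g$.
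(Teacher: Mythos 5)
Your proof is correct and takes essentially the same approach as the paper: both arguments rest on the same local observations (once some $\ell_\iota$ hits $g-1$ the sequence stays there; $\ell$ can only strictly increase in the case $j_{\iota+1}>j_\iota+1$, and there $a_{\iota+1}\ge a_\iota+1$; if $\ell$ stays below $g-1$ then $a$ increases by at most $1$ per step starting from $a_1=0$), and both invoke $k\le s-g$ at the same point for (iii). The only cosmetic difference is that you package the key step of (iii) as an explicit induction ($\ell_\iota<g-1\Rightarrow a_\iota\le\iota-1$) proved for all $\iota$, while the paper reduces to bounding $a_k$ alone via (ii).
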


We remark that we use that fact $k \leq s - g$ in the proof of the third property.

\begin{proof}
    For the first property, firstly note that if some $\ell_\iota$ equals $g-1$, then by the definition of $\ell_{[k]}$, we have $\ell_{\iota + 1} = \dots = \ell_k = g - 1$. So, we just need to consider the case when $\ell_{\iota + 1} < g - 1$. In the second case of the definition, we have $\ell_{\iota+1}= \ell_\iota$. In the first case, if $\ell_{\iota + 1} < g - 1$, then $\ell_{\iota+1}=j_{\iota+1}-(j_\iota - \ell_\iota) - 1 = \ell_\iota + (j_{\iota+1}-j_\iota -1) > \ell_\iota $ as the first case applies only when $j_{\iota+1}-j_\iota \geq 2$.\\
    \indent The second property follows from the inequality $\ell_{\iota + 1} \leq j_{\iota+1}-(j_\iota - \ell_\iota) - 1$, which holds in the first case of inductive definition, and $\ell_{\iota +1} = \ell_\iota$, which holds in the second case, combined with the fact that $j_{[k]}$ is increasing.\\
    \indent For \textbf{(iii)}, by the second property, it suffices to show that $a_k \leq s - g$. If some $\ell_\iota$ equals $g-1$, then $\ell_{\iota + 1} = \dots = \ell_k = g-1$, so $a_k \leq s-g$, as $j_k \leq s- 1$. On the other hand, if $\ell_\iota < g-1$ for all $\iota \in [k]$, then we have that $a_1 = j_1 - \ell_1 = 0$ and $a_{\iota + 1} \leq a_{\iota} + 1$ at each step, so $a_k \leq k-1$. As $k \leq s-g$, we are done.\\
    \indent Finally, as both $\ell_{[k]}$ and $a_{[k]}$ are increasing, we only need to show that $\ell_{\iota + 1} > \ell_{\iota}$ implies $a_{\iota + 1} > a_{\iota}$. But the first inequality only happens when in the first case of definition, so
    \[a_{\iota + 1} = j_{\iota + 1} - \ell_{\iota + 1} \geq j_{\iota+1} - \Big( j_{\iota+1}-(j_\iota - \ell_\iota) - 1\Big) = (j_\iota - \ell_\iota) + 1 = a_\iota + 1.\qedhere\]
\end{proof}

\vspace{\baselineskip}

Now consider the restriction $\mathcal{H}$ of 
\[
\psi_{wt^{\ell_1d},\ldots ,wt^{\ell_kd}}(z_1,\ldots z_k)=\sum_{\sigma \in \on{Sym}_k} \mathcal{L}_Q(wt^{\ell_1d}z_{\sigma(1)},\ldots wt^{\ell_kd}z_{\sigma(k)})
\]
to the subspace $t^{a_1d}G_d\times \cdots \times t^{a_kd}G_d$. We remark that property \textbf{(iii)} of the claim above implies that $d(a_\iota + 1)\leq d(s - g + 1) \leq n-m$, so $t^{a_\iota d}G_d \leq G_{n-m}$ and the restriction $\mathcal{H}$ is well-defined.\\
\indent On the left hand side is a multilinear form of bias at least $c$. Now we examine to what the terms on the right hand side of the expression above correspond for each permutation $\sigma$.\\

First suppose that $\sigma$ is a permutation such that $\ell_{\sigma(\iota)} = \ell_\iota$ for all $\iota$. Then $a_\iota + \ell_{\sigma(\iota)} = a_\iota + \ell_\iota = j_\iota$ for each $\iota$, so the term contributing to $\mathcal{H}$ on the right hand side is precisely $\mathcal{L}_{v_i}$. The number of such permutations $\sigma$ is $\mathsf{f}(j_{[k]}) \neq 0$. \\

Now suppose that $\sigma $ is a permutation such that for some $i$ we have $x_{\sigma(i)}\neq x_i$. The term contributing to $\mathcal{H}$ on the right hand side for $\sigma$ is 
\[\mathcal{T}_\sigma(z_{[k]}) := \mathcal{L}_Q(wt^{\ell_1d}z_{\sigma(1)},\ldots wt^{\ell_kd}z_{\sigma(k)})\]
which is a permutation of 
\[\mathcal{L}_Q(wt^{\ell_{\sigma^{-1}(1)}d}z_1,\ldots wt^{\ell_{\sigma^{-1}(k)}d}z_{k})\]
where we recall that $z_\iota$ ranges over $t^{a_\iota d}G_d$. Hence, this multilinear form is precisely $\mathcal{L}_{v'}$ for the sequence $v' = (\ell_{\sigma^{-1}(1)} + a_1, \dots, \ell_{\sigma^{-1}(k)} + a_k)$. We now apply Lemma \ref{qineq} to obtain that $v' \prec v_i$, so $v'$ appears before $v_i$ in our ordering of elements of $[0, s-1]^k$, i.e. $v' =v_{i'}$ for some $i'<i$.\\

Thus  we obtain that 
\[
\mathsf{f}(i_{[k]})\mathcal{L}_{v_i} = \mathcal{H} - \sum_{\sigma \in S'} \mathcal{T}_{\sigma}
\]

where $S'$ is the set of permutations $\sigma$ for which there is at least one index $\iota$ such that $\ell_{\sigma(\iota)} \neq \ell_\iota$ and each $\mathcal{T}_{\sigma}$ is a suitable multilinear form produced above, being a permutation of some $\mathcal{L}_{v'}$ for $i' < i$. Hence by above we have that $\mathcal{L}_{v_i}$ is a sum of a multilinear form of bias at least $c$ and at most $k!$ multilinear forms which are permutations of preceding multilinear forms. This completes the proof.
\end{proof}

\section{Case of biased polynomials}

In this section, we treat the special case of correlation of M\"obius function with phases of biased polynomials.

\begin{proposition}\label{red_to_alg}
    There exists a constant $C_4 \geq 1$ depending only on $k$ such that the following holds. Let $Q : G_n \to \mathbb{F}_p$ be a polynomial of degree $k$ such that $\on{bias} \mathcal{L}_Q \geq c$ and suppose that $|\sum_{f\in G_n}\mu(f)\chi(Q(f))| \geq c|G_n|$. Then there is a polynomial $\tilde{Q}$ of degree at most $k-1$ such that
    \[|\sum_{f\in G_n}\mu(f)\chi(\tilde{Q}(f))| \geq (c/2)^{C_4}|G_n|.\]
\end{proposition}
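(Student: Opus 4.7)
The plan is to invoke the approximation theorem~\ref{biased_ml_form_approx-intro} on $\mathcal{L}_Q$ with parameter $\varepsilon := c/4$. This produces coefficients $c_1,\dots,c_m \in \mathbb{C}$ with $\sum_i |c_i| \leq c^{-1}$ and multiaffine forms $\lambda_1,\dots,\lambda_m : G_n^k \to \mathbb{F}_p$ with vanishing multilinear part such that
\[\exx_{z_1,\dots, z_k \in G_n} \Big|\chi(\mathcal{L}_Q(z_{[k]})) - \sum_{i \in [m]} c_i \chi(\lambda_i(z_{[k]}))\Big|^2 \leq \varepsilon^2.\]
Since $Q_{\on{hom}}(x) = \mathcal{L}_Q(x,x,\dots,x)$, what I want is a comparable $L^2$-approximation of $\chi \circ Q_{\on{hom}}$ on $G_n$ by phases of polynomials of degree at most $k-1$. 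The diagonal carries only an exponentially small fraction of the mass of $G_n^k$, so direct restriction is useless; the key maneuver is instead a measure-preserving substitution.

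I would substitute $z_j = x + y_j$ using auxiliary variables $y_1,\dots,y_k \in G_n$. Since $(x, y_{[k]}) \mapsto (x+y_1,\dots,x+y_k)$ pushes the uniform distribution on $G_n^{k+1}$ forward to the uniform distribution on $G_n^k$ (the fibers all have size $|G_n|$), the displayed $L^2$-bound is preserved. By multilinearity of $\mathcal{L}_Q$, the expansion of $\mathcal{L}_Q(x+y_1,\dots,x+y_k)$ produces $Q_{\on{hom}}(x)$ from the all-$x$ term, plus a remainder $R(x, y_{[k]})$ in which at least one $y_j$ appears and which is therefore a polynomial of degree at most $k-1$ in $x$. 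A parallel expansion of $\lambda_i(x+y_1,\dots,x+y_k)$ also yields a polynomial in $x$ of degree at most $k-1$, because the potential degree-$k$ contribution lives inside the multilinear part of $\lambda_i$, which has been arranged to vanish. Multiplying through by the unit phase $\chi(-R(x,y_{[k]}))$ converts the $L^2$-bound into
\[\exx_{x, y_{[k]} \in G_n} \Big|\chi(Q_{\on{hom}}(x)) - \sum_{i \in [m]} c_i \chi(P_i(x, y_{[k]}))\Big|^2 \leq \varepsilon^2,\]
where $P_i(\cdot, y_{[k]}) := \lambda_i(\cdot + y_1, \dots, \cdot + y_k) - R(\cdot, y_{[k]})$ has degree at most $k-1$ in its first argument. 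Averaging in $y_{[k]}$ selects a specific $y_{[k]}^*$ preserving the bound with $x$ the only remaining average.

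The rest is routine bookkeeping. Factoring $\chi(Q(f)) = \chi(Q_{\on{low}}(f))\chi(Q_{\on{hom}}(f))$ and substituting $\sum_i c_i \chi(P_i(f, y_{[k]}^*))$ for $\chi(Q_{\on{hom}}(f))$, Cauchy--Schwarz bounds the replacement error by $\varepsilon |G_n|$. The hypothesis $|\sum_f \mu(f) \chi(Q(f))| \geq c|G_n|$ therefore upgrades to $|\sum_i c_i \sum_f \mu(f)\chi(Q_{\on{low}}(f) + P_i(f, y_{[k]}^*))| \geq 3c|G_n|/4$, and the triangle inequality combined with $\sum_i |c_i| \leq c^{-1}$ picks out an index $i^*$ for which $\tilde{Q}(f) := Q_{\on{low}}(f) + P_{i^*}(f, y_{[k]}^*)$, a polynomial of degree at most $k-1$, satisfies $|\sum_f \mu(f)\chi(\tilde{Q}(f))| \geq 3c^2|G_n|/4 \geq (c/2)^{C_4}|G_n|$ for any $C_4 \geq 2$. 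The principal obstacle is conceptual rather than technical: spotting that the $(x+y_j)$-substitution simultaneously isolates $Q_{\on{hom}}(x)$ along the diagonal and, precisely by means of the vanishing multilinear part of the $\lambda_i$, keeps every contributing polynomial in $x$ to degree at most $k-1$. Once this observation is in place, everything else is essentially formal.
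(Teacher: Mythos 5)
Your proposal is correct and follows essentially the same route as the paper. The paper factors this argument through an intermediate statement (Theorem~\ref{biased_poly_approx}), but the substance is identical: invoke the multilinear approximation theorem on $\mathcal{L}_Q$, push the uniform measure on $U^{k+1}$ onto $U^k$ via $(x,t_{[k]}) \mapsto (t_1+x,\dots,t_k+x)$, use the vanishing multilinear part of the $\lambda_i$ (together with the decomposition $\mathcal{L}_Q(t_1+x,\dots,t_k+x)=Q_{\on{hom}}(x)+Q'(x)$ for $Q'$ of degree $\leq k-1$) to obtain degree-$(k-1)$ polynomial phases approximating $\chi\circ Q_{\on{hom}}$, absorb $Q_{\on{low}}$, fix the shift parameters by averaging, and finish with Cauchy--Schwarz plus the $\ell^1$ bound $\sum_i|c_i|\leq c^{-1}$ to extract a single $i^*$.
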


The most of this section will be devoted to approximations of phases of biased polynomials by lower-order phases. Our main result is Theorem~\ref{biased_poly_approx}, which will then easily imply the proposition above.

\subsection{Approximation of biased polynomials}

Throughout the subsection, $U, U_1, \dots, U_k$ will be finite-dimensional vector spaces over $\mathbb{F}_p$.

\begin{theorem}\label{biased_poly_approx}
    Let $Q: U \to \mathbb{F}_p$ be a polynomial of degree $k$, where $k < p$. Suppose that $\on{bias} \mathcal{L}_Q \geq c$. Let $\varepsilon > 0$. Then there exist $m \leq (\varepsilon c/2)^{-O(1)}$, coefficients $c_1, \dots, c_m \in \mathbb{C}$ such that $\sum_{i \in [m]} |c_i| \leq c^{-1}$ and polynomials $R_1, \dots, R_m : G \to \mathbb{F}_p$ of degree at most $k-1$ such that 
    \[\Big\| \chi \circ Q - \sum_{i \in [m]} c_i \chi \circ R_i\Big\|_{L^2} \leq \varepsilon.\]
\end{theorem}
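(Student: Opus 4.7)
The plan is to derive this polynomial approximation directly from Theorem~\ref{biased_ml_form_approx-intro}, by transferring its $L^2$-approximation of $\chi\circ\mathcal{L}_Q$ on $U^k$ down to an approximation of $\chi\circ Q$ on $U$ along a carefully chosen shifted diagonal. Since $\on{bias}\mathcal{L}_Q \geq c$, I first apply Theorem~\ref{biased_ml_form_approx-intro} to $\mathcal{L}_Q$ with parameter $\varepsilon$, producing $m \leq 2^{25}\varepsilon^{-14}c^{-14}$ coefficients $c_1,\ldots,c_m \in \mathbb{C}$ with $\sum_i |c_i| \leq c^{-1}$, together with multiaffine forms $\lambda_1,\ldots,\lambda_m : U^k \to \mathbb{F}_p$ with vanishing multilinear part, such that
\[\Big\|\chi\circ\mathcal{L}_Q - \sum_i c_i\,\chi\circ\lambda_i\Big\|_{L^2(U^k)} \leq \varepsilon.\]

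Next, I observe that the map $U^k \times U \to U^k$ defined by $(y,x)\mapsto (y_1+x,\ldots,y_k+x)$ is uniformly $|U|$-to-$1$, so averaging over it gives
\[\exx_{y \in U^k}\exx_{x \in U}\Big|\chi(\mathcal{L}_Q(y_1+x,\ldots,y_k+x)) - \sum_i c_i\,\chi(\lambda_i(y_1+x,\ldots,y_k+x))\Big|^2 \leq \varepsilon^2.\]
By the pigeonhole principle, some $y \in U^k$ achieves $L^2(U)$-error at most $\varepsilon$ along the shifted diagonal $x \mapsto (y_1+x,\ldots,y_k+x)$. Fixing such a $y$, I unpack the restrictions. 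Expanding $\mathcal{L}_Q(y_1+x,\ldots,y_k+x)$ by multilinearity as $\sum_{T\subseteq[k]}\mathcal{L}_Q(z_1,\ldots,z_k)$ with $z_i=x$ for $i\in T$ and $z_i=y_i$ otherwise, the only degree-$k$ contribution in $x$ comes from $T=[k]$ and equals $\mathcal{L}_Q(x,\ldots,x)=Q_{\on{hom}}(x)$, while every $T \subsetneq [k]$ contributes a term of degree $|T|\leq k-1$ in $x$. Hence $\mathcal{L}_Q(y_1+x,\ldots,y_k+x) = Q_{\on{hom}}(x) + P_y(x)$ with $\deg P_y \leq k-1$. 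Similarly, writing $\lambda_i = \sum_{S\subsetneq[k]}\mu_{i,S}(x_S)$ with $\mu_{i,S}$ multilinear in $|S|\leq k-1$ variables, the restriction $\lambda_i(y_1+x,\ldots,y_k+x)$ becomes a polynomial $S_{i,y}(x)$ of degree at most $k-1$.

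Finally, multiplying pointwise by the unimodular phases $\chi\circ(-P_y)$ and $\chi\circ Q_{\on{low}}$ (which preserve $L^2$-norms) transforms the diagonal approximation into
\[\Big\|\chi\circ Q - \sum_i c_i\,\chi\circ R_i\Big\|_{L^2(U)} \leq \varepsilon,\]
where $R_i := S_{i,y}-P_y+Q_{\on{low}}$ has degree at most $k-1$; together with the bounds $m \leq (\varepsilon c/2)^{-O(1)}$ and $\sum_i |c_i| \leq c^{-1}$, this proves the theorem. The only conceptual subtlety — rather than a serious obstacle — is that the raw diagonal of $U^k$ has density $|U|^{1-k}$ and so cannot absorb the $L^2$-error directly; averaging across the family of parallel shifted diagonals partitioning $U^k$ bypasses this, and the vanishing multilinear part of each $\lambda_i$ is precisely what ensures that the restriction lies in the degree-$(k-1)$ polynomial class.
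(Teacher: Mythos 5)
Your proposal is correct and follows essentially the same route as the paper's proof: apply the multilinear approximation theorem to $\chi\circ\mathcal{L}_Q$, average over the family of shifted diagonals $x\mapsto(t_1+x,\dots,t_k+x)$ to find a good shift by pigeonhole, observe that $\mathcal{L}_Q$ restricted to such a diagonal equals $Q_{\on{hom}}(x)$ plus a degree-$\leq k-1$ polynomial while each $\lambda_i$ (having vanishing multilinear part) restricts to a degree-$\leq k-1$ polynomial, and then multiply through by lower-order unimodular phases to pass from $Q_{\on{hom}}$ to $Q$. The only cosmetic difference is that you phrase the change of variables as a uniformly $|U|$-to-$1$ map rather than as the paper's direct parametrization $(t_{[k]},x)\mapsto(t_1+x,\dots,t_k+x)$; the content is identical.
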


We prove the analogous result for multilinear forms and deduce the theorem from that. A \textit{multiaffine form} is a map $\alpha : U_{[k]} \to \mathbb{F}_p$ which is affine in each variable separately. Equivalently, it is a sum of multilinear forms $\alpha(x_{[k]}) = \sum_{I \subseteq [k]} \beta_I(x_I)$, each $\beta_I$ depending on the subset of variables $x_I$. Note that $\beta_{[k]}$ is unique due to the identity
\[\beta(d_{[k]}) = \sum_{I \subseteq [k]} (-1)^{k - |I|} \alpha((x_i + d_i)_{i \in I}, (x_i)_{i \in I^c}).\]
We refer to $\beta_{[k]}$ as the \textit{multilinear part} of $\alpha$. In our case, vanishing multilinear part means that $\alpha$ is of lower-order.

\begin{theorem}\label{biased_ml_form_approx}
    Let $\alpha : U_{[k]} \to \mathbb{F}_p$ be a multilinear form with $\on{bias}\alpha \geq c$. Let $\varepsilon > 0$. Then there exist
    $m \leq 2^{25}\varepsilon^{-14}c^{-14}$, coefficients $c_1, \dots, c_m \in \mathbb{C}$ such that $\sum_{i \in [m]} |c_i| \leq c^{-1}$, and multiaffine forms $\lambda_1, \dots, \lambda_m : U_{[k]} \to \mathbb{F}_p$ with vanishing multilinear part such that 
    \[\Big\| \chi \circ \alpha - \sum_{i \in [m]} c_i \chi \circ \lambda_i\Big\|_{L^2} \leq \varepsilon.\]
\end{theorem}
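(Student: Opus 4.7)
My plan is to prove Theorem~\ref{biased_ml_form_approx} via a Hahn--Banach duality argument combined with Fourier analysis on the last variable and the structural information provided by Theorem~\ref{weakinverse}. First, I would assume for contradiction that no such approximation exists. Applying Hahn--Banach separation in $L^2(U_{[k]})$ to the convex set $\mathcal{A}=\{\sum_i c_i\chi\circ\lambda_i:\sum_i|c_i|\le c^{-1},\ \lambda_i\text{ multiaffine with vanishing multilinear part}\}$, we would obtain a dual witness $g\in L^2(U_{[k]})$ with $\|g\|_{L^2}\le 1$ such that $|\langle g,\chi\circ\alpha\rangle|>\varepsilon$ while $|\langle g,\chi\circ\lambda\rangle|<c\varepsilon$ for every multiaffine form $\lambda:U_{[k]}\to\mathbb{F}_p$ with vanishing multilinear part.

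Next, I would split off the last variable: write $\alpha(x_{[k]})=A(x_{[k-1]})\cdot x_k$ for a multilinear map $A:U_{[k-1]}\to U_k$, and introduce the partial Fourier transform $\hat{g}(x_{[k-1]},y)=\ex_{x_k\in U_k}g(x_{[k]})\chi(-y\cdot x_k)$. Then $\langle g,\chi\circ\alpha\rangle=\ex_{x_{[k-1]}}\hat{g}(x_{[k-1]},A(x_{[k-1]}))$, and the Fourier identity $\id_{T_y}(x_{[k-1]})=\ex_{b\in U_k}\chi(\alpha(x_{[k-1]},b)-b\cdot y)$ exhibits each level-set indicator of $A$ as an average of phases of multiaffine forms with vanishing multilinear part (since $\alpha(\cdot,b)$ is independent of $x_k$, and $b\cdot y$ is a constant). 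Taking any $y$ and any multiaffine $\lambda_0$ on $U_{[k-1]}$, the dual bound translates into the pointwise estimate $|\ex_{x_{[k-1]}}\chi(-\lambda_0(x_{[k-1]}))\id_{T_y}(x_{[k-1]})\hat{g}(x_{[k-1]},y)|<c\varepsilon$.

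To convert this into a contradiction with $|\ex_{x_{[k-1]}}\hat{g}(x_{[k-1]},A(x_{[k-1]}))|>\varepsilon$, I would exploit the bias assumption at two levels. The fiber $T_0$ has density $\ge c$ by assumption, so Theorem~\ref{weakinverse} furnishes a multilinear subvariety $V\subseteq T_0$ of codimension $O(\log_p c^{-1})$; on $V\times U_k$ we have $\chi\circ\alpha\equiv 1$, and $\id_V$ is itself expressible as an average of multiaffine phases with vanishing multilinear part via Fourier inversion against its defining forms. I would then decompose $\ex_{x_{[k-1]}}\hat{g}(x_{[k-1]},A(x_{[k-1]}))$ by splitting $U_{[k-1]}=V\sqcup V^c$, use the dual bound directly on $V$, and apply Cauchy--Schwarz together with Lovett's subadditivity of bias (Lemma~\ref{lovettcorrelation}) on $V^c$, forcing $|\langle g,\chi\circ\alpha\rangle|\le\varepsilon$ and yielding the contradiction.

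The main obstacle I anticipate is calibrating the $L^1$-coefficient bound simultaneously with the polynomial term count. The na\"ive identity $\chi\circ\alpha=\sum_{\xi\in U_k}\ex_b\chi\circ\lambda_{\xi,b}$ with $\lambda_{\xi,b}(x_{[k]})=\alpha(x_{[k-1]},b)+\xi\cdot(x_k-b)$ has coefficient $L^1$-mass $|U_k|$, which generically far exceeds $c^{-1}$, so the content of the theorem is to compress this representation by a factor of $c\cdot|U_k|$ using the bias. I expect the exponent $14$ in the final bound to arise from several rounds of Cauchy--Schwarz needed to convert the bias of $\alpha$ into usable bias for the contracted forms, combined with a Maurey-style sampling step at the end to convert an $L^1$-bounded representation into a polynomial-size $L^2$-approximation. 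Modulating the Hahn--Banach argument through this quantitative route, rather than merely obtaining an existence statement, is where the bulk of the technical work lies.
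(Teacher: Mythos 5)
Your Hahn--Banach framing is a legitimate starting point, and you correctly identify both the na\"ive $L^1$-expensive identity $\chi\circ\alpha=\sum_{\xi}\ex_b\chi\circ\lambda_{\xi,b}$ and the need to compress it by a factor $c|U_k|$ — but the proposal misses the single idea that makes the compression possible, and the contradiction step as sketched is circular.

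The paper's proof is a direct construction built on a \emph{translate trick}. Writing $A:U_{[k-1]}\to U_k$ for the last-variable contraction and $Z=A^{-1}(0)$ (density $\ge c$), the key observation (Claim~\ref{relationLandA}) is that for \emph{any} $t_{[k-1]}$, the multiaffine map $L_{t_{[k-1]}}(x_{[k-1]})=\sum_{I\subsetneq[k-1]}(-1)^{k-|I|}A(x_I,t_{[k-1]\setminus I})$, obtained by discarding the top term of the binomial expansion of $A(x)=A\bigl(t+(x-t)\bigr)$, satisfies $A(x_{[k-1]})-L_{t_{[k-1]}}(x_{[k-1]})=A(x_{[k-1]}-t_{[k-1]})$, hence $A\equiv L_{t_{[k-1]}}$ on the whole coset $t_{[k-1]}+Z$. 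Thus on $(t_{[k-1]}+Z)\times U_k$, the phase $\chi\circ\alpha$ \emph{is} a multiaffine phase with vanishing multilinear part. One then samples $m\approx(\varepsilon c)^{-4}$ independent uniform translates and applies Hoeffding so that the resulting weighted sum of these translated phases, properly normalized, approximates $\chi\circ\alpha$ in $L^2$; finally $Z$ is replaced by a bounded-codimension external approximation $V\supseteq Z$ and the indicator $\id_{t+V}$ is expanded by Fourier inversion on the defining forms. The coefficient mass is $|U_{[k-1]}|/|Z|\le c^{-1}$ and the term count is $p^s m=(\varepsilon c)^{-O(1)}$. Nothing in this construction is a contradiction argument.

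Your proposed contradiction step does not close. You split $\langle g,\chi\circ\alpha\rangle$ over $V\times U_k$ and $V^c\times U_k$ where $V\subseteq Z$ is the codimension-$O(\log_p c^{-1})$ subvariety from Theorem~\ref{weakinverse}, control the $V$-part by the dual bound (fine, since $\id_V$ is a bounded combination of lower-order multiaffine phases), and invoke ``Cauchy--Schwarz together with Lovett's subadditivity'' on $V^c$. But $V$ has density only $\sim c^{O(1)}$, so $V^c$ has density $\approx 1$ and the $V^c$-contribution is essentially the entire inner product you set out to bound — there is no leverage there, and Lemma~\ref{lovettcorrelation} concerns biases of sums of multilinear forms, not inner products of an arbitrary dual witness $g$ with $\chi\circ\alpha$. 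What you actually need is the observation that $\alpha$ is low-order on \emph{every} translate of $Z$ (not just on $Z$ itself, where it is trivially zero), and that a small random family of translates covers approximately uniformly. That is the missing idea; once you have it, the explicit construction is simpler than the duality route, and the Hahn--Banach/Maurey machinery becomes unnecessary.
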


\begin{proof}
    Let $A: U_{[k-1]} \to U_k$ be the multilinear map such that $A(x_{[k-1]}) \cdot x_k = \alpha(x_{[k]})$. Write $Z = \{x_{[k-1]} \in U_{[k-1]} : A(x_{[k-1]}) = 0\}$, which has density at least $c$ by assumptions.\\

    Let $t_{[k-1]} \in U_{[k-1]}$ be arbitrary. Define the multiaffine map $L_{t_{[k-1]}} : U_{[k-1]} \to U_k$ by
    \[L_{t_{[k-1]}}(x_{[k-1]}) = \sum_{I \subsetneq [k-1]} (-1)^{k - |I|} A(x_I, t_{[k-1] \setminus I}).\]
    We remark that the multilinear part of $L_{t_{[k-1]}}$ vanishes since $I = [k-1]$ is not included in the sum.

    \begin{claim}\label{relationLandA}
        Whenever $x_{[k-1]} \in t_{[k-1]} + Z = \{(t+z)_{[k-1]} : z_{[k-1]} \in Z\}$, we have
        \[A(x_{[k-1]}) = L_{t_{[k-1]}}(x_{[k-1]}).\]
    \end{claim}

    \noindent\textbf{Remark.} The notation $(t+z)_{[k-1]}$ is a shorthand for the sequence $t_1 + z_1, \dots, t_{k-1} + z_{k-1}$ and $t_{[k-1]} + Z$ is simply a translate of $Z$ by $t_{[k-1]}$.\\

    \begin{proof}
        Take $z_{[k-1]} \in Z$ such that $x_i  = z_i + t_i$. Then
        \begin{align*}A(x_{[k-1]}) - L_{t_{[k-1]}}(x_{[k-1]}) = &A(x_{[k-1]})- \sum_{I \subsetneq [k-1]} (-1)^{k - |I|} A(x_I, t_{[k-1] \setminus I}) \\
        = &A(x_{[k-1]}) + \sum_{I \subsetneq [k-1]} (-1)^{k - 1 - |I|} A(x_I, t_{[k-1] \setminus I})\\
        = &\sum_{I \subseteq [k-1]} (-1)^{k - 1 - |I|} A(x_I, t_{[k-1] \setminus I})\\
        = &A(x_1 - t_1, \dots, x_{k-1} - t_{k-1}) = A(z_{[k-1]}) = 0.\qedhere\end{align*}
    \end{proof}

    \noindent\textbf{First approximation.} Let $m$ be a positive integer to be chosen later and let $t^{(1)}_{[k-1]}, \dots, t^{(m)}_{[k-1]} \in U_{[k-1]}$ be chosen uniformly and independently at random. By Hoeffding's inequality for $m$ independent random variables taking values in $[0,1]$, for each $x_{[k-1]} \in U_{[k-1]}$, the probability that 
    \begin{equation} \label{prbabilitybound} \Big||\{i \in [m]: x_{[k-1]} \in t^{(i)}_{[k-1]} + Z\}| - \frac{Z}{|U_{[k-1]}|}m\Big| \leq \frac{\varepsilon}{4} \frac{Z}{|U_{[k-1]}|}m \end{equation}
    is at least $1 - 2\exp(- 2^{-3}\varepsilon^2 c^2 m)$. Taking $m = 2^{7}\varepsilon^{-4} c^{-4}$ and using the elementary inequality $\exp(-M) = \frac{1}{\exp(M)} \leq \frac{1}{1 + M}\leq \frac{1}{M}$ for all $M > 0$, we get probability at least
    \[1 - 2\exp(- 2^{4}\varepsilon^{-2} c^{-2}) \geq 1 - \frac{c^2\varepsilon^2}{8}.\]

    \begin{claim}
        There exists a choice of $t^{(1)}_{[k-1]}, \dots, t^{(m)}_{[k-1]}$ such that the function 
        \[f(x_{[k]}) = \frac{|U_{[k-1]}|}{m |Z|} \sum_{i \in [m]} \id_{t^{(i)}_{[k-1]} + Z}(x_{[k-1]}) \chi\Big(L_{t^{(i)}_{[k-1]}}(x_{[k-1]}) \cdot x_k\Big)\]
        satisfies $\| \chi \circ \alpha - f\|_{L^2} \leq \varepsilon/2$.
    \end{claim}

    \begin{proof}
        Due to the probability bound above, we have a choice of $t^{(1)}_{[k-1]}, \dots, t^{(m)}_{[k-1]}$ such that~\eqref{prbabilitybound} holds on a set $X \subseteq U_{[k-1]}$ of size $|X| \geq (1 - \frac{c^2\varepsilon^2}{8})|U_{[k-1]}|$. Also, owing to Claim~\ref{relationLandA}, $\id_{t^{(i)}_{[k-1]} + Z}(x_{[k-1]}) \chi\Big(L_{t^{(i)}_{[k-1]}}(x_{[k-1]}) \cdot x_k\Big)$ is $\chi\Big(A(x_{[k-1]}) \cdot x_k\Big)$ when $x_{[k-1]} \in t^{(i)}_{[k-1]} + Z$, and 0 otherwise. Hence, we deduce that $|\chi \circ \alpha(x_{[k]}) - f(x_{[k]})| \leq \frac{|U_{[k-1]}|}{m |Z|} \frac{\varepsilon}{4} \frac{Z}{|U_{[k-1]}|}m = \frac{\varepsilon}{4}$ when $x_{[k-1]} \in X$, and $|\chi \circ \alpha(x_{[k]}) - f(x_{[k]})| \leq \frac{|U_{[k-1]}|}{|Z|} \leq c^{-1}$ for trivial reasons otherwise. Hence
        \begin{align*}\| \chi \circ \alpha - f\|_{L^2}^2 = &\frac{1}{|U_{[k-1]}|} \sum_{x_{[k-1]} \in U_{[k-1]}} |\chi \circ \alpha(x_{[k]}) - f(x_{[k]})|^2\\
        \leq &\frac{1}{|U_{[k-1]}|}\sum_{x_{[k-1]} \in X} \Big(\frac{\varepsilon}{4}\Big)^2 + \frac{1}{|U_{[k-1]}|}\sum_{x_{[k-1]} \in U_{[k-1]} \setminus X}c^{-2} \leq \varepsilon^2/4.\qedhere\end{align*}
    \end{proof}
    
    \vspace{\baselineskip}

    Let $t^{(1)}_{[k-1]}, \dots, t^{(m)}_{[k-1]}$ and $f$ be as in the claim.\\

    \vspace{\baselineskip}

    \noindent\textbf{Second approximation.} We may externally approximate $Z$ by a strictly-multilinear variety $V \subseteq U_{[k-1]}$ of codimension $s \leq 2\log_p (4\varepsilon^{-1} c^{-1} m)$ with $|V \setminus Z| \leq \Big(\frac{\varepsilon c}{4m}\Big)^2|U_{[k-1]}|$. Note that the $L^2$ norm of the function mapping $x_{[k]}$ to  
    \[\id_{t^{(i)}_{[k-1]} + V}(x_{[k-1]}) \chi\Big(L_{t^{(i)}_{[k-1]}}(x_{[k-1]}) \cdot x_k\Big) - \id_{t^{(i)}_{[k-1]} + Z}(x_{[k-1]}) \chi\Big(L_{t^{(i)}_{[k-1]}}(x_{[k-1]}) \cdot x_k\Big)\]
    is at most $\sqrt{|V \setminus Z|/|U_{[k-1]}|} \leq \frac{\varepsilon c}{4m}$.\\
    \indent Hence, defining 
    \[g(x_{[k]}) = \frac{|U_{[k-1]}|}{m |Z|} \sum_{i \in [m]} \id_{t^{(i)}_{[k-1]} + V}(x_{[k-1]}) \chi\Big(L_{t^{(i)}_{[k-1]}}(x_{[k-1]}) \cdot x_k\Big),\]
    by triangle inequality, we have $\| \chi \circ \alpha - g\|_{L^2} \leq \varepsilon$.\\

    \noindent\textbf{Final form of approximation.} Let $V = \beta^{(-1)}(0)$ for a multilinear map $\beta: U_{[k-1]} \to \mathbb{F}_p^s$. Then we may rewrite $g$ as 
    \begin{align*}g(x_{[k]}) = &\frac{|U_{[k-1]}|}{m |Z|} \sum_{i \in [m]} \id(\beta(x_1 - t^{(i)}_1,\dots,x_{k-1} - t^{(i)}_{k-1}) = 0) \chi\Big(L_{t^{(i)}_{[k-1]}}(x_{[k-1]}) \cdot x_k\Big)\\
    = & \frac{|U_{[k-1]}|}{m |Z|} \sum_{i \in [m]} p^{-s}\sum_{\tau \in \mathbb{F}_p^s} \chi\Big(\tau\cdot \beta(x_1 - t^{(i)}_1,\dots,x_{k-1} - t^{(i)}_{k-1})\Big) \chi\Big(L_{t^{(i)}_{[k-1]}}(x_{[k-1]}) \cdot x_k\Big)\\
    = &  \sum_{i \in [m], \tau \in \mathbb{F}_p^s} \frac{|U_{[k-1]}|}{p^s m |Z|} \chi\Big(\tau\cdot \beta(x_1 - t^{(i)}_1,\dots,x_{k-1} - t^{(i)}_{k-1}) + L_{t^{(i)}_{[k-1]}}(x_{[k-1]}) \cdot x_k\Big)
    \end{align*}
    which is the desired form of the approximation (we stress that we get phases of multiaffine forms with vanishing multilinear part). Note that the coefficients of the multiaffine phases are nonnegative reals, adding to $|U_{[k-1]}| / |Z| \leq c^{-1}$. The total number of terms in the approximation is 
    \[p^s m \leq (4\varepsilon^{-1} c^{-1} m)^2 m \leq 2^{25}\varepsilon^{-14}c^{-14}.\qedhere\]
\end{proof}

\begin{proof}[Proof of Theorem~\ref{biased_poly_approx}]
    Write $Q(x) = Q_k(x) + Q_{\on{low}}(x)$, where $Q_k(x)$ is a homogeneous polynomial consisting only of monomials of degree $k$ and $Q_{\on{low}}(x)$ has degree at most $k-1$. Recall that $\mathcal{L}_Q(a_1, \dots, a_k)$ is the symmetric multilinear form such that $\mathcal{L}_Q(x, \dots, x) = Q_k(x)$ and $\mathcal{L}_Q(a_1, \dots, a_k) = (k!)^{-1} \Delta_{a_1, \dots, a_k} Q(x)$. By assumptions $\on{bias} \mathcal{L}_Q \geq c$.\\

    Theorem~\ref{biased_ml_form_approx} for approximation parameter $\varepsilon$ applies to give
    \[\Big\| \chi \circ \mathcal{L}_Q  - \sum_{i \in [r]} c_i \chi \circ \lambda_i\Big\|_{L^2} \leq \varepsilon,\]
    with $r \leq (2c^{-1}\varepsilon^{-1})^{O(1)}$ and $\sum_{i \in [r]} |c_i| \leq c^{-1}$.\\
    
    By expressing points of $U^k$ as $(t_1 + x, \dots, t_k + x)$, for $t_1, \dots, t_k, x \in U$, we have
    \[\exx_{x, t_1, \dots, t_k} \Big|\chi \circ\mathcal{L}_Q (t_1 + x, \dots, t_k + x) - \sum_{i \in [r]} c_i \chi \circ \lambda_i(t_1 + x, \dots, t_k + x)\Big|^2 \leq \varepsilon^2.\]

    By averaging, there exist $t_1, \dots, t_k$ such that
 \[\exx_{x} \Big|\chi \circ \mathcal{L}_Q (t_1 + x, \dots, t_k + x) - \sum_{i \in [r]} c_i \chi \circ \lambda_i(t_1 + x, \dots, t_k + x)\Big|^2 \leq \varepsilon^2.\]

 Note that $\mathcal{L}_Q(t_1 + x, \dots, t_k + x) = \mathcal{L}_Q(x, x, \dots, x) + Q'(x)$ for some polynomial $Q'$ of degree at most $k - 1$, and $R_i(x) := \lambda_i(t_1 + x, \dots, t_k + x)$ is a polynomial of degree at most $k - 1$ in $x$, as the multilinear part of each $\lambda_i$ vanishes. Hence
 \[\Big\|\chi \circ (Q_k + Q') - \sum_{i \in [r]} c_i \chi \circ R_i\Big\|_{L^2} \leq \varepsilon.\]
 As $Q = Q_k + Q_{\on{low}}$, it follows that $\sum_{i \in [r]} c_i \chi \circ (R_i + Q_{\on{low}} - Q')$ is the desired approximation.
\end{proof}

We may now prove Proposition~\ref{red_to_alg}.

\begin{proof}[Proof of Proposition~\ref{red_to_alg}]
    Approximate $\chi(Q(x))$ by $\sum_{i \in [r]} c_i \chi \circ R_i$ with error $c^2/4$. By the Cauchy-Schwarz inequality, and averaging over $i \in [r]$, we get correlation
    \[\Big|c_i\sum_{f\in G_n}\mu(f)\chi(R_i(f))\Big| \geq (c/2)^{O(1)}|G_n|.\]
    Since the coefficients satisfy the inequality $|c_i| \leq c^{-1}$, we get the conclusion.
\end{proof}

\subsection{Inverse theorem for uniformity norms in case of polynomial phases}

A common application of partition vs. analytic rank problem is the inverse theorem for uniformity norms in case of polynomial phases. Using the approximation result of this section, we may easily deduce polynomial bounds for this problem.

\begin{corollary}
    Let $G = \mathbb{F}_p^n$, $Q : G\to \mathbb{F}_p$ be a polynomial of degree $k$ and assume $k < p$. If $\|\chi \circ Q\|_{\mathsf{U}^k} \geq c$ then there exists a polynomial $P : G \to \mathbb{F}_p$ of degree at most $k-1$ such that
    \[\Big|\exx_{x \in G} \chi(Q(x))\overline{\chi(P(x))}\Big| \geq (c/2)^{O_k(1)}.\]
\end{corollary}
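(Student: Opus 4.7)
The plan is to reduce the corollary to Theorem~\ref{biased_poly_approx} by converting the $\mathsf{U}^k$-norm hypothesis into a bias hypothesis on $\mathcal{L}_Q$, and then extract a single lower-order polynomial by pigeonholing over the approximating sum.

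First I would compute $\|\chi \circ Q\|_{\mathsf{U}^k}^{2^k}$ directly. Expanding the uniformity norm,
\[\|\chi \circ Q\|_{\mathsf{U}^k}^{2^k} = \exx_{x, h_1, \dots, h_k \in G} \chi\bigl(\Delta_{h_1} \Delta_{h_2} \cdots \Delta_{h_k} Q(x)\bigr).\]
Since $\deg Q = k$, the iterated additive derivative is independent of $x$ and, by the definition of the associated symmetric multilinear form, equals $k!\, \mathcal{L}_Q(h_{[k]})$. Because $k < p$, the map $x \mapsto k! x$ is a bijection of $\mathbb{F}_p$ and $\chi' := \chi \circ (k! \cdot)$ is again a nontrivial additive character. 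For any multilinear form the bias is a nonnegative real and is invariant under replacing $\chi$ by any nontrivial character (it counts the proportion of $h_{[k-1]}$ on which the associated linear map in $h_k$ vanishes). Thus
\[c^{2^k} \le \|\chi \circ Q\|_{\mathsf{U}^k}^{2^k} = \on{bias}(k!\, \mathcal{L}_Q) = \on{bias}(\mathcal{L}_Q).\]

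Next I would apply Theorem~\ref{biased_poly_approx} to $Q$ with bias bound $c^{2^k}$ and approximation parameter $\varepsilon = 1/2$. This produces coefficients $c_1, \dots, c_m \in \mathbb{C}$ satisfying $\sum_{i} |c_i| \le c^{-2^k}$, together with polynomials $R_1, \dots, R_m : G \to \mathbb{F}_p$ of degree at most $k-1$, such that
\[\Bigl\|\chi \circ Q - \sum_{i \in [m]} c_i\, \chi \circ R_i\Bigr\|_{L^2} \le \tfrac{1}{2}.\]
Taking the $L^2$ inner product with $\chi \circ Q$ (which has $L^2$-norm equal to $1$) and applying Cauchy--Schwarz gives
\[\Bigl|\sum_{i \in [m]} c_i \bigl\langle \chi \circ R_i,\, \chi \circ Q\bigr\rangle\Bigr| \ge 1 - \tfrac{1}{2} = \tfrac{1}{2}.\]

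Finally, I would pigeonhole: since $\sum_i |c_i| \le c^{-2^k}$, there exists $i \in [m]$ with
\[\bigl|\langle \chi \circ R_i,\, \chi \circ Q\rangle\bigr| \;\ge\; \frac{1/2}{\sum_j |c_j|} \;\ge\; \tfrac{1}{2}\, c^{2^k} \;\ge\; (c/2)^{2^k},\]
where the last step uses $c \le 1$. Setting $P = R_i$ yields the desired polynomial of degree at most $k-1$ with $|\ex_{x \in G} \chi(Q(x))\overline{\chi(P(x))}| \ge (c/2)^{O_k(1)}$.

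The argument is essentially mechanical once one has Theorem~\ref{biased_poly_approx} in hand; the only genuine step is the initial identification $\|\chi \circ Q\|_{\mathsf{U}^k}^{2^k} = \on{bias}(\mathcal{L}_Q)$, and this relies crucially on the assumption $k < p$ so that $k!$ is invertible and the top derivative does not collapse. There is no serious obstacle beyond keeping the $\sum |c_i| \le c^{-2^k}$ bookkeeping consistent so that the final lower bound takes the form $(c/2)^{O_k(1)}$ rather than something depending on $m$.
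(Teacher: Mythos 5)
Your proposal is correct and follows essentially the same route as the paper: convert the $\mathsf U^k$-norm bound into a bias bound on $\mathcal L_Q$, apply Theorem~\ref{biased_poly_approx} with $\varepsilon = 1/2$, pair the approximation against $\chi\circ Q$ via Cauchy--Schwarz, and pigeonhole to extract a single $R_i$. Your pigeonhole divides by $\sum_i |c_i| \le c^{-2^k}$ rather than by the paper's count $m \le (c/2)^{-O(1)}$, and you spell out the harmless $k!$ scaling via invariance of bias under nonzero scalar multiplication, but these are cosmetic variations on the same argument.
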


\begin{proof}
     Expanding the definition of the uniformity norm, we have
    \[c^{2^k} \leq \|\chi \circ Q\|_{\mathsf{U}^k}^{2^k} = \exx_{x, a_1, \dots, a_k \in G} \chi\Big(\Delta_{a_1, \dots, a_k} Q(x)\Big) = \on{bias} \mathcal{L}_Q.\]

    Apply Theorem~\ref{biased_poly_approx} with approximation parameter $\varepsilon = 1/2$ to find $m \leq (\varepsilon c/2)^{-O(1)} \leq (c/2)^{-O(1)}$, coefficients $c_1, \dots, c_m \in \mathbb{C}$ such that $\sum_{i \in [m]} |c_i| \leq c^{-2^{k}}$ and polynomials $R_1, \dots, R_m : G \to \mathbb{F}_p$ of degree at most $k-1$ such that, writing $F(x) = \sum_{i \in [m]} c_i \chi \circ R_i(x)$,
    \[\Big\| \chi \circ Q - F\Big\|_{L^2} \leq 1/2.\]
    Since $\|\chi \circ Q\|_{L^2} = 1$, by the Cauchy-Schwarz inequality, we get
    \[\Big|\exx_{x \in G} \chi \circ Q(x) \overline{F(x)}\Big| \geq \Big|\exx_{x \in G} \chi \circ Q(x) \overline{\chi \circ Q(x)}\Big|  - \Big|\exx_{x \in G} \chi \circ Q(x) \overline{(\chi \circ Q(x) - F(x))}\Big| \geq 1 - \|\chi \circ Q\|_{L^2}\|\chi \circ Q - F\|_{L^2} \geq 1/2.\]
    Hence $\Big|\exx_{x \in G} \chi \circ Q(x) \overline{\Big(\sum_{i \in [m]} c_i \chi \circ R_i(x)\Big)}\Big| \geq 1/2$, so by averaging, we find some $i \in [m]$ with
    \[\Big|\exx_{x \in G} \chi \circ Q(x) \overline{\chi \circ R_i(x)\Big)}\Big| \geq 1/(2m) \geq (c/2)^{O(1)}.\qedhere\]
\end{proof}

\section{Putting everything together}

We are ready to prove our main result, Theorem~\ref{main_thm}.

\begin{proof}[Proof of Theorem~\ref{main_thm}]
   We prove the claim by induction on $k$. The base case $k = 1$ was carried out in~\cite{BL}. Recall that $C_1 = C_1(k), C_2 = C_2(k), C_3 = C_3(k)$ and $C_4 = C_4(k)$ are constants from Propositions~\ref{passtomultilemma},~\ref{mobiuscorrvariety},~\ref{psitolqlargebiasstep} and~\ref{red_to_alg} corresponding to the main steps of the proof. Let $\eta_0 = \varepsilon_{k-1}/(20kC_3C_4)$ where $\varepsilon_{k-1}$ is the constant from the inductive hypothesis. Thus $\eta_0$ depends on $k$ only.\\
   
   Suppose that $c = \Big|\sum_{f \in G_n} \mu(f)\chi(Q(f))\Big|$. Define $m_0 = d = \lfloor \eta_0 n\rfloor$ and note that for sufficiently large $n$ in terms of $\eta_0$ only, we have $2d > \eta_0 n$. Apply Proposition~\ref{passtomultilemma} with such a choice of $m_0$. If we obtain the first outcome, we get
   \begin{equation}
       \label{finalsectionFirstOutcome} 
       \on{bias} \mathcal{L}_Q \geq p^{-km_0}\Big(\frac{c}{2n}\Big)^{C_1}.
   \end{equation}

   Let us now focus on the second outcome. Eventually, we will obtain a similar inequality to~\eqref{finalsectionFirstOutcome}.\\
   
   Thus, we have $m_0 \leq m \leq 17n/18$ such that there are at least $(c/2n)^{C_1} p^{k m}$ $k$-tuples $a_{[k]}\in G_{m}^k$ such that the multilinear form $\psi_{a_{[k]}}$ 
    \[x_{[k]} \rightarrow \sum_{\pi \in \on{Sym}_k}\mathcal{L}_Q(a_1x_{\pi(1)},\ldots, a_k x_{\pi(k)})\]
    defined on $G_{n-m}^k$ has bias at least $(c/2n)^{C_1}$.\\

    Apply Proposition~\ref{mobiuscorrvariety} to find a multilinear variety $W \subseteq (G_{n - m})^k$ of codimension $r \leq C_1C_{2} \log_p(4nc^{-1})$ such that for every $a_{[k]} \in W$, we have
    \[\on{bias}_{x_1, \dots, x_k \in G_{n-m}} \sum_{\pi \in \on{Sym}_d}\phi(a_1x_{\pi(1)}, a_2 x_{\pi(2)},\dots, a_k x_{\pi(k)}) \geq (c/4n)^{C_1C_2}.\]

    We may assume that $c \geq 4n p^{ - \frac{1}{k\,2^{k+1} \eta_0^{-(k+1)} C_1C_2}n}$ and $n$ sufficiently large, as otherwise we are done. Thus, 
    \[\log_p(4nc^{-1}) \leq \frac{1}{k\,2^{k+1} \eta_0^{-(k+1)} C_1C_2}n\]
    so, as $n/d < 2\eta_0^{-1}$,
    \[k (n/d)^k r < k \,2^k\eta_0^{-k} r \leq k \,2^k\eta_0^{-k} C_1C_{2} \log_p(4nc^{-1}) \leq \eta_0n/2 < d.\]
    By Proposition~\ref{psitolqlargebiasstep}, as $\eta_0 \leq \frac{1}{18(k+2)}$ so $d \leq \frac{n}{18(k+2)}$, we have $\on{bias} \mathcal{L}_Q\ge p^{-C_3 \eta_0 n}  \Big((c/8n)^{C_1C_2}\Big)^{C_3^{\eta_0^{-C_3}}}$.\\

    Hence, combining the two outcomes and writing $\tilde{C} = C_3^{\eta_0^{-C_3}}$, we have
    \[\on{bias} \mathcal{L}_Q \geq p^{- \eta_0kC_3 n}  (c/8n)^{C_1C_2\tilde{C}}.\]

    Applying Proposition~\ref{red_to_alg}, we see that for some polynomial $\tilde{Q}$ of degree at most $k - 1$
    \[\Big|\exx_{f\in G_n}\mu(f)\chi(\tilde{Q}(f))\Big| \geq p^{- \eta_0kC_3C_4 n} (c/16n)^{C_1C_2\tilde{C}C_4}.\]

    By the inductive hypothesis, we have 
    \[p^{- \eta_0kC_3C_4 n} (c/16n)^{C_1C_2\tilde{C}C_4} \geq p^{-\varepsilon_{k-1}n}.\]
    We choose $\eta_0 = \varepsilon_{k-1}/(20kC_3C_4)$, we have 
    \[c \geq 16 n  p^{-\frac{\varepsilon_{k-1}}{2C_1C_2\tilde{C}C_4}n},\]
    completing the proof.
\end{proof}

\end{document}